\theoremstyle{definition}
\newtheorem{cor}[equation]{Corollary}
\newtheorem{thm}[equation]{Theorem}
\newtheorem{lemma}[equation]{Lemma}
\newtheorem{prop}[equation]{Proposition}
\theoremstyle{remark}
\newtheorem{rmk}[equation]{Remark}
\newcommand{\ZZ}{\mathbb{Z}}
\newcommand{\FF}{\mathbb{F}}
\newcommand{\defined}{\mathrel{\mathop:}=}
\newcommand{\GG}{\mathbb{G}}
\newcommand{\mM}{\mathcal{M}}
\newcommand{\Spec}{\operatorname{Spec}}
\newcommand{\Ext}{\operatorname{Ext}}
\newcommand{\TMF}{\operatorname{TMF}}
\newcommand{\Tmf}{\operatorname{Tmf}}
\newcommand{\tmf}{\operatorname{tmf}}
\numberwithin{equation}{subsection}
\numberwithin{figure}{subsection}
\title{On the homotopy of $Q(3)$ and $Q(5)$ at the prime $2$}
\date{\today}
\author{Mark Behrens}
\address{Department of Mathematics\\
  University of Notre Dame\\
  Notre Dame, IN 46556}
\email{mbehren1@nd.edu}
\author{Kyle Ormsby}
\address{Department of Mathematics\\
  Reed College\\
  Portland, OR 97202}
\email{ormsbyk@reed.edu}
\begin{document}

\begin{abstract}
We study modular approximations $Q(\ell)$, $\ell = 3,5$, of the $K(2)$-local
sphere at the prime $2$ that arise from $\ell$-power degree isogenies
of elliptic curves.  We develop Hopf algebroid level tools for working with
$Q(5)$ and record Hill, Hopkins, and Ravenel's computation of the
homotopy groups of $\TMF_0(5)$.  Using these
tools and formulas of Mahowald and Rezk for $Q(3)$ we determine the
image of Shimomura's $2$-primary divided $\beta$-family in the Adams-Novikov
spectral sequences for $Q(3)$ and $Q(5)$.  Finally, we use low-dimensional
computations of the homotopy of $Q(3)$ and $Q(5)$ to explore
the r\^{o}le of these spectra as approximations to $S_{K(2)}$.
\end{abstract}

\maketitle

\tableofcontents

In \cite{Behrens}, motivated by \cite{GHMR}, the $p$-local spectrum
$Q(\ell)$ ($p \nmid \ell$) is defined as the totalization of  an
explicit semi-cosimplicial $E_\infty$-ring spectrum of the form
$$ Q(\ell)^\bullet = \bigg( \TMF \Rightarrow \TMF_0(\ell) \times \TMF \Rrightarrow \TMF_0(\ell) \bigg). $$
The spectrum $Q(\ell)$ serves as a kind of approximation to the
$K(2)$-local sphere; see \S\ref{subsec:Qdef} for more details on its construction.
In \cite{Behrensbldg}, it is proven that there is an equivalence
$$ Q(\ell)_{K(2)} \simeq (E_2^{h\Gamma_\ell})^{h\mathrm{Gal}} $$
where $\Gamma_\ell$ is a certain subgroup of the Morava stabilizer group $\mathbb{S}_2$ coming from isogenies of elliptic curves.  The subgroup $\Gamma_\ell$ is dense if $p$ is odd and $\ell$ generates a dense subgroup of $\ZZ_p^\times$ \cite{BehrensLawson}.  Based on this, it is conjectured that there are fiber sequences
\begin{equation}\label{eq:Qfiber}
D_{K(2)} Q(\ell) \rightarrow S_{K(2)} \xrightarrow{u} Q(\ell)
\end{equation}
for such choices of $\ell$ (and the case of $\ell = 2$ and $p = 3$ is handled by explicit computation in \cite{Behrens}, and is closely related to \cite{GHMR}).    
Density also is used in \cite{Behrensbeta} to show that for such $\ell$, $Q(\ell)$ detects the exact divided $\beta$ family pattern for $p \ge 5$. 

However, in the case of $p = 2$, $\ZZ_2^\times$ is not topologically cyclic, and the closure of $\Gamma_\ell$ in $\mathbb{S}_2$ is the inverse image of the closure of the subgroup $\ell^{\ZZ} < \ZZ_2^\times$ under the reduced norm
$$ N: \mathbb{S}_2 \rightarrow \ZZ_2^\times. $$
It is not altogether clear in this case what the analog of the conjecture (\ref{eq:Qfiber}) should be, though one possibility is suggested in \cite{BehrensLawson}.  Although the 2-primary ``duality resolution'' of Goerss, Henn, Mahowald, and Rezk (see \cite{Henn}) seems to take the form of a fiber sequence like (\ref{eq:Qfiber}), we will observe that the mod $(2, v_1)$-behavior of $Q(3)$ actually precludes $Q(3)$ from being half of the duality resolution (see Remark~\ref{rmk:Q3wontwork}). The non-density of $\Gamma_3$, together with the appearance of both $\TMF_0(3)$ and $\TMF_0(5)$ factors in $\TMF \wedge \TMF$ also suggests that, from a $\TMF$-resolutions perspective, $Q(3)$ alone may not be seeing enough homotopy, and that a combined approach of $Q(3)$ and $Q(5)$ may be required at the prime $2$.  

The goal of this paper is to explore such an approach by extending the work of Mahowald and Rezk \cite{MahowaldRezk} on $Q(3)$, and initiating a similar study of $Q(5)$.   

The first testing ground for the effectiveness of $Q(3)$ or $Q(5)$ at
detecting $v_2$-periodic homotopy at the prime $2$ is Shimomura's
$2$-primary divided beta family \cite{Shimomura}.  To the authors'
surprise, $Q(3)$ was found to exactly detect Shimomura's divided beta
patterns on the $2$-lines of the $E_2$ term of its Adams-Novikov
spectral sequence, as we shall explain in Section~\ref{sec:beta}.
Hence $Q(3)$ is all that is needed to detect the shape of the divided
beta family.  The authors were equally surprised to find no such
phenomenon for $Q(5)$ - the beta family for $Q(5)$ has greater
$v_1$-divisibility than that for the sphere.  On the other hand, the
$K(2)$-localization of $Q(5)$ is built out of homotopy fixed point
spectra of groups with larger $2$-torsion than $Q(3)$.  This raises
the possibility that while $Q(5)$ may be less effective when it comes
to beta elements, it could detect exotic torsion in higher
cohomological degrees that is invisible to $Q(3)$.  This possibility
is explored through some low dimensional computations.

We now summarize the contents of this paper.  In Section
\ref{sec:level5} we review and expand the theory of
$\Gamma_0(5)$-structures on elliptic curves.  A
$\Gamma_0(5)$-structure is an elliptic curve equipped with a cyclic
subgroup of order 5.  We recall an explicit description of the scheme
representing $\Gamma_1(5)$-structures (elliptic curves with a point of
order 5) in terms of \emph{Tate normal form} curves and use this
description to present several Hopf algebroids that stackify
to the moduli space of $\Gamma_0(5)$-structures.  We then use these
Hopf algebroids and the geometry of elliptic curves to determine the
maps defining $Q(5)^\bullet$.

In Section \ref{sec:HFP} we compute the homotopy fixed point
spectral sequence
\[
  H^*(\FF_5^\times;\pi_*\TMF_1(5)) \implies \pi_*\TMF_0(5).
\]
The ring $\pi_*\TMF_1(5)$ and the action of $\FF_5^\times$ on it are
determined by Tate normal form, allowing us to produce a detailed
group cohomology computation.  We then compute the differentials and
hidden extensions in the spectral sequence by a number of methods:
$\TMF$-module structure, transfer-restriction arguments, and
comparision with the homotopy orbit spectral sequence.  Our use of the
homotopy orbit spectral sequence to determine hidden extensions is
somewhat novel and may find use in other contexts.  Note that the computation of $\pi_* \TMF_0(5)$ was first due to Mahowald and Rezk (unpublished) using this descent spectral sequence.  Hill, Hopkins, and Ravenel then rediscovered this computation using the slice spectral sequence \cite{HHRunpub}.

Since $Q(\ell)$ is the totalization of a cosimplicial spectrum, we can
compute the $E_2$-term of its Adams-Novikov spectral sequence as the
cohomology of a double complex.  The differentials in the double
complex are either internal cobar differentials for the Weierstrass or
$\Gamma_0(5)$ Hopf algebroids or external differentials determined by
the cosimplicial structure of $Q(\ell)^\bullet$.  In Section
\ref{sec:Ql} we review formulas for the external differentials in the
$\ell=3$ and $\ell=5$ cases.  The $Q(3)$ formulas are due to Mahowald
and Rezk \cite{MahowaldRezk} while those for $Q(5)$ are derived from Section
\ref{sec:level5}.

In Section \ref{sec:beta} we compute several chromatic spectral
sequences related to $Q(3)$ and $Q(5)$.  Definitions are stated in
Section \ref{subsec:chrom} and the technique we use is
carefully laid out in Section \ref{subsec:technique}.  Stated
precisely, we compute
$H^{0,*}(M_0^2 C^*_{tot}(Q(3)))$ and $H^{0,*}(M_1^1 C^*_{tot}(Q(5)))$,
both of which are related to the divided $\beta$ family in the
$Q(\ell)$ spectra.  We compare these groups to Shimomura's $2$-primary divided $\beta$
family for the sphere spectrum (i.e.~the groups $\Ext^{0,*}(M_0^2
BP_*)$, reviewed in Theorem \ref{thm:Shim}).  In Theorem \ref{thm:Q3beta} we find that $\Ext^{0,*}(M_0^2
BP_*)$ is isomorphic to $H^{0,*}(M_0^2C^*_{tot} Q(3))$, so $Q(3)$
precisely detects the divided $\beta$ family.  In contradistinction,
Theorem \ref{thm:Q5beta} and Corollary \ref{cor:Q5beta} show that the divided $\beta$
family for $Q(5)$ has extra $v_1$-divisibility.

Finally, in Section \ref{sec:lowdim} we compute $\pi_nQ(3)$ and $\pi_n
Q(5)$ for $0 \le n< 48$.  More precisely, what we actually compute is the portion of these homotopy groups detected by connective versions of $\TMF$.\footnote{It is likely that what we are computing is a ``connective'' version of $Q(\ell)$ built out of the connective versions of $\TMF_0(\ell)$ recently constructed in \cite{HillLawson}, though we do not pursue this here.}  These computations give evidence for some homotopy which $Q(5)$ detects which is not detected by $Q(3)$.

In this paper we assume the reader has some familiarity with the theory of elliptic curves, level structures, and the stacks which parameterize these objects.  We also assume the reader is familiar with $\TMF$, and its variants.  To give extensive background on these subjects would take us outside of the scope of this paper.  For the reader looking for outside resources, we recommend the 2007 Talbot conference proceedings \cite{TMF}.  These proceedings are to be published in a forthcoming book, but an online reference is given in the bibliography.  The expository articles contained there should point the inquiring reader in the right direction.  This paper is itself extending computations of \cite{Behrens} and \cite{MahowaldRezk}.  The reader is encouraged to have some familiarity with these cases before jumping into the computations contained herein.  

{\bf Acknowledgments}  The authors would like to extend their gratitude to Mike Hill, Mike Hopkins, and Doug Ravenel, for generously sharing their slice spectral sequence computations of the homotopy groups of $\TMF_0(5)$.  The first author would also like to express his debt to Mark Mahowald, for sharing his years of experience at the prime $2$, and providing preprints with exploratory computations.  The homotopy fixed point computations were aided by a key insight of Jack Ullman, who pointed out that the slice spectral sequence agreed with the homotopy orbit spectral sequence in a range, giving us the idea to use homotopy orbits to resolve hidden extensions.  The authors also would like to thank Agnes Beaudry and Zhouli Xu for pointing out some omissions in the low dimensional computations at the end of this paper, and the referee, for his or her careful comments on many aspects of this paper, in particular on the details of the computation of the homotopy fixed point spectral sequence for $\TMF_0(5)$.
The authors were both supported by grants from the NSF. 

\section{Elliptic curves with level $5$ structures}
\label{sec:level5}

We consider the moduli problems of $\Gamma_1(5)$- and
$\Gamma_0(5)$-structures on elliptic curves. An elliptic curve with a 
$\Gamma_1(5)$-structure over a commutative $\ZZ[1/5]$-algebra $R$ is a pair
$(C,P)$
where $C$ is an elliptic curve over $R$, and $P \in C$ is a point of exact order $5$.  
An elliptic curve with a $\Gamma_0(5)$-structure is a pair $(C,H)$ with $C$
an elliptic curve over $R$ and $H<C$ a subgroup of order $5$.  Let $\mM_i(5)$
denote the moduli stack (over $\Spec(\ZZ[1/5])$) of $\Gamma_i(5)$-structures.
 
Let $\mM_i^1(5)$ denote  
the moduli stack of tuples $(C,P,v)$ (respectively $(C,H,v)$) where
$v$ is a non-zero tangent vector at $0 \in C$.  This is equivalent to the
moduli problem in which an non-trivial invariant differential is recorded.  Note that in the case where $i
= 1$, we can use translation by $P$ to equivalently specify this
structure as a tuple $(C,P,v')$ where $v'$ is a non-zero tangent vector at
$P$.

As we proceed, we will
freely move between moduli problems of the form $\mM_i(\ell)$ and
$\mM_i^1(\ell)$, so we will comment briefly here on the significance in
topological modular forms of recording or not recording a tangent vector.
As is customary in the subject, let $\omega$ denote the invertible sheaf of invariant
differentials on the moduli stack of elliptic curves, $\mM$, so that sections of $\omega^{\otimes t}$ correspond to modular forms of weight $t$ which are meromorphic at the cusp.  Recall
that the elliptic spectral sequence takes the form
\[
  E_{2}^{s,t} = H^s(\mM;\omega^{\otimes t})\implies \pi_{t-s}\TMF.
\]
Now consider the stack of elliptic curves with the data of a non-zero tangent vector at $0$,
$\mM^1$.  This stack is equipped with a $\GG_m$-action which scales
this vector
$$
\GG_m \times \mM^1 \rightarrow \mM^1,
$$
which on points is given by
$$ (z, (C,v)) \mapsto (C, z\cdot v). $$
Let $\pi$ denote the forgetful map
$$ \pi: \mM^1 \rightarrow \mM $$
which on points is given by
$$ (C,v) \mapsto C. $$
The stack $\mM^1$ is a $\GG_m$-torsor over $\mM$, and $\omega$ is the associated line bundle over $\mM$.  We take a moment to spell this out in more concrete terms.

If $\mathcal{X}$ is any scheme or stack with a $\GG_m$-action, the structure sheaf admits a decomposition
$$ \mathcal{O}_{\mathcal{X}} \cong \bigoplus_{ t \in \ZZ} \mathcal{O}_{\mathcal{X}} (t) $$
where the sections of $\mathcal{O}_{\mathcal{X}}(t)$ consist of those functions $f$ on $\mathcal{X}$ satisfying
$$ f(z \cdot x) = z^t \cdot f(x). $$
One source of $\GG_m$-equivariant stacks arises from the stackification of commutative Hopf algebroids which are graded.  Suppose that $(T,\Xi)$ is a commutative Hopf algebroid with a grading, and let $\mathcal{X}$ be the associated stack:
$$ \mathcal{X} = \Spec(T)//\Spec{\Xi}. $$
In this setting, the grading on $T$ endows the scheme $\Spec(T)$ with a $\GG_m$-action, and the grading on $\Xi$ ensures that this $\GG_m$ action descends to the quotient, endowing $\mathcal{X}$ with the structure of a $\GG_m$-action.

In the case of the $\GG_m$-action on $\mathcal{M}^1$, we have
$$ \pi_* \mathcal{O}_{\mM^1}(1) = \omega. $$
 The cohomology ring
$H^*(\mM^1;\mathcal{O}_{\mM^1})$ inherits an additional integer grading; we
will write this bigraded ring as
$$ H^{s,t}(\mM^1;\mathcal{O}_{\mM^1}) := 
H^{s}(\mM^1;\mathcal{O}_{\mM^1}(t)),  $$
so there is an isomorphism
\[
  H^{s,t}(\mM^1;\mathcal{O}_{\mM^1}) \cong H^s(\mM;\omega^{\otimes t}).
\]
Similar statements hold for $\mM_i^1(\ell)$.  As such, if our interest
is in computing the $E_2$-term of the
elliptic spectral sequence for $\TMF_i(\ell)$, then it suffices to
study moduli problems in which we record a non-zero tangent vector
(or equivalently, a non-trivial invariant differential).  For the remainder of this section, all presentations of $\GG_m$-equivariant moduli stacks by Hopf algebroids shall be implicitly graded, with generators named ``$g_k$'' to implicitly lie in degree $k$.

The maps in the cosimplicial $E_\infty$ ring $Q(5)^\bullet$ arise
by evaluating the $\TMF$-sheaf $\mathcal{O}^{top}$ on maps
$\mM_0(5)\to \mM$ and $\mM_0(5)\to \mM_0(5)$.  Recall that the
Weierstrass Hopf algebroid $(A,\Gamma)$ stackifies to $\mM^1$; we
review the structure of $(A,\Gamma)$ in \ref{subsec:rep}.  In this
section we produce a Hopf algebroid $(B^1,\Lambda^1)$ representing
$\mM_0^1(5)$ and produce Hopf algebroid formulas for the maps in 
(the cohomology of) the 
semi-simplicial stack associated to $Q(\ell)^\bullet$.

\subsection{Representing $\mM_1(5)$}

In this section we
will give explicit presentations of $\mM_1(5)$ and $\mM_1^1(5)$.
Consider the rings
\[
\begin{aligned}
  B &\defined \ZZ[1/5, b,\Delta^{-1}]\\
  B^1 &\defined \ZZ[1/5, a_1,a_2,a_3,\Delta^{-1}]/(a_2^3+a_3^2-a_1a_2a_3)
\end{aligned}
\]
where $\Delta$ is given respectively by:
\begin{align*}
  \Delta(b) & = b^5(b^2-11b-1), \\
\Delta(a_1, a_2, a_3) & = -8a_1^2a_3^2a_2^2 + 20a_1a_3^3a_2 - a_1^4a_3^2a_2 - 11a_3^4 + a_1^3a_3^3.
\end{align*}
We have the following theorem.

\begin{thm}\label{thm:TN}
The stacks $\mM_1(5)$ and $\mM_1^1(5)$ are affine schemes, given by
\[
\begin{aligned}
  \mM_1(5) &= \Spec(B),\\
  \mM_1^1(5) &= \Spec(B^1).
\end{aligned}
\]
\end{thm}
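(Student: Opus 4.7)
The plan is to recognize the rings $B$ and $B^1$ as parametrizing the classical Tate normal form for elliptic curves with a point of order $5$. Since the $\Gamma_1(N)$ moduli problems are representable for $N \ge 4$ over $\ZZ[1/N]$ (Katz--Mazur), both $\mM_1(5)$ and $\mM_1^1(5)$ are automatically affine schemes, so it suffices to identify their coordinate rings by parametrizing $R$-points for arbitrary commutative $\ZZ[1/5]$-algebras $R$.

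For $\mM_1^1(5)$: given $(C, P, v) \in \mM_1^1(5)(R)$, choose the Weierstrass presentation of $C$ whose canonical tangent vector at the origin equals $v$; this pins down the scaling factor $u$ in a Weierstrass coordinate change. Use the remaining translation freedom $(r,s,t)$ to move $P$ to $(0,0)$ and rotate the tangent line at $P$ onto $y = 0$. This kills $a_6$ and $a_4$ (the latter because the tangent direction at $(0,0)$ on a Weierstrass curve is $(a_3, a_4)$), producing a unique canonical form
\[ y^2 + a_1 xy + a_3 y = x^3 + a_2 x^2 \]
with $\Delta$ invertible. The order-$5$ condition $3P = -2P$ then unfolds via the chord-tangent group law: direct computation gives $2P = (-a_2, a_1 a_2 - a_3)$ and $-2P = (-a_2, 0)$, and intersecting the chord from $P$ to $2P$ (slope $\lambda = a_3/a_2 - a_1$) with $C$ shows via Vieta's formulas that the $x$-coordinate of $-3P$ is $\lambda^2 + a_1 \lambda$; equating to $-a_2$ and clearing denominators yields
\[ a_2^3 + a_3^2 - a_1 a_2 a_3 = 0. \]
Computing the standard Weierstrass discriminant (using the formulas for $b_2, b_4, b_6, b_8$ with $a_4 = a_6 = 0$) and reducing modulo this relation recovers the $\Delta(a_1, a_2, a_3)$ in the statement, establishing $\mM_1^1(5) = \Spec(B^1)$.

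The passage to $\mM_1(5)$ amounts to quotienting by the residual $\GG_m$-action $(a_1, a_2, a_3) \mapsto (u a_1, u^2 a_2, u^3 a_3)$ coming from rescaling $v$. Every term of the discriminant formula is divisible by $a_3^2$, so $\Delta$-invertibility forces $a_3$ to be a unit; the defining relation rewritten as $a_2(a_1 a_3 - a_2^2) = a_3^2$ then forces $a_2$ to be a unit as well. Hence the ratio $u = a_3/a_2$ is well-defined and uniquely normalizes the Weierstrass form to $a_1 = 1-b$, $a_2 = a_3 = -b$ with $b \defined -a_2^3/a_3^2$, giving the classical Tate form $y^2 + (1-b)xy - by = x^3 - bx^2$ of discriminant $b^5(b^2 - 11b - 1)$.

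The main obstacle is the chord-tangent computation producing the order-$5$ relation together with the algebraic verification that the generic Weierstrass discriminant simplifies to the prescribed $\Delta(a_1, a_2, a_3)$ modulo this relation; both are elementary but demand careful bookkeeping with signs. A secondary point to watch is the specialization $\Delta(b) = b^5(b^2 - 11 b - 1)$, which simultaneously certifies that the Tate form above is smooth and verifies compatibility between $B$ and $B^1$ under the $\GG_m$-quotient.
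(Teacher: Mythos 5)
Your proposal is correct and follows essentially the same route as the paper: reduce to the homogeneous Tate normal form by killing $a_6$ and $a_4$ with translations and a shear, extract the relation $a_2^3+a_3^2=a_1a_2a_3$ from $[3](0,0)=[-2](0,0)$ via the chord--tangent law, and obtain $\mM_1(5)$ by the $\GG_m$-normalization equalizing the $y$ and $x^2$ coefficients. Your explicit checks that $\Delta$-invertibility forces $a_2$ and $a_3$ to be units, and the direct derivation of $a_1=1-b$ from the order-$5$ relation, merely fill in details the paper leaves implicit (it passes through the intermediate form $T(b,c)$ and then deduces $b=c$).
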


\begin{proof}
We first use the techniques of \cite[\S 4.4]{Huse} (which is a
recapitulation of a method from \cite{Kubert}) to produce an explicit model
for $\mM_1^1(5)$ as an affine scheme.  The procedure is exhibited
graphically in Figure \ref{fig:TN}.

Suppose $(C,P)$ is a
$\Gamma_1(5)$-structure over a commutative ring $R$ in Weierstrass form with $P=(\alpha,\beta)$.  For
$r,s,t\in R$ and $\lambda\in R^\times$ let
$\varphi_{r,s,t,\lambda}$ denote the coordinate change
\[
\begin{aligned}
  x&\mapsto \lambda^{-2}x+r\\
  y&\mapsto \lambda^{-3}y+\lambda^{-2}sx+t.
\end{aligned}
\]
Move $P$ to $(0,0)$ via the coordinate change $\varphi_{-\alpha,0,-\beta,1}:(C,P)\to
(C_{\underline{a}'},(0,0))$ where $C_{\underline{a}'}$ has Weierstrass form
\[
  y^2+a_1'xy+a_3'y = x^3+a_2'x^2+a_4'x.
\]
(Note that $a_6'=0$ because $(0,0)$ is on the curve.)  Next eliminate
$a_4'$ by applying the transformation $\varphi_{0,-a_4'/a_3',0,1}$.
The result is a smooth Weierstrass curve
\begin{equation}\label{eqn:homTN}
  T^1(a_1,a_2,a_3): y^2 + a_1xy + a_3y = x^3+a_2x^2
\end{equation}
with $\Gamma_1(5)$-structure $(0,0)$ which we call the
\emph{homogeneous Tate normal form} of $(C,P)$.

\begin{figure}\label{fig:TN}
\begin{tikzpicture}
  \matrix[matrix of math nodes,row sep=0em,column sep=0em,font=\tiny]
  {|[name=arb]|\includegraphics[width=0.23\linewidth]{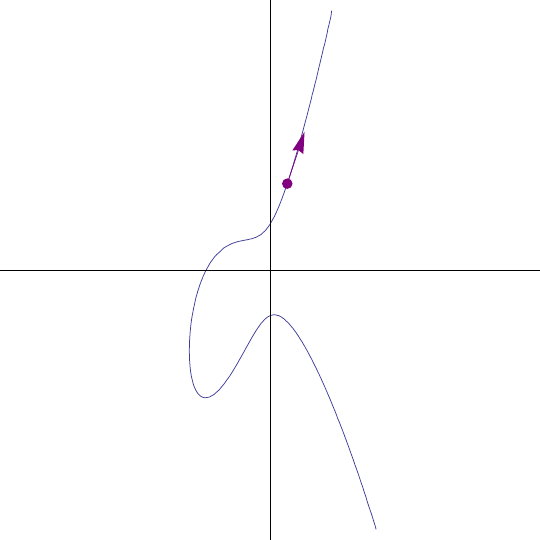}
    &|[name=00]|\includegraphics[width=0.23\linewidth]{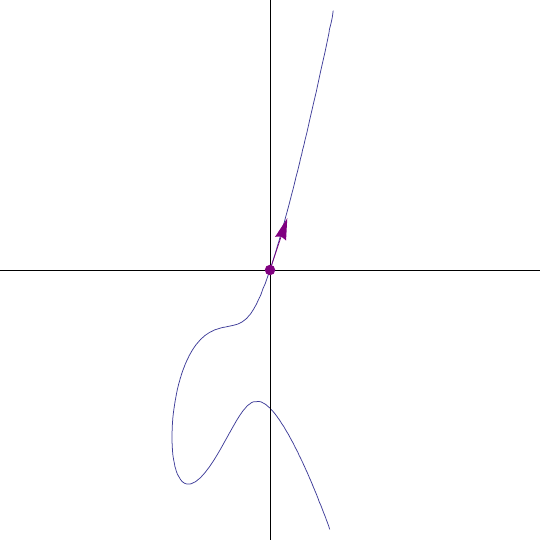}
  &|[name=hom]|\includegraphics[width=0.23\linewidth]{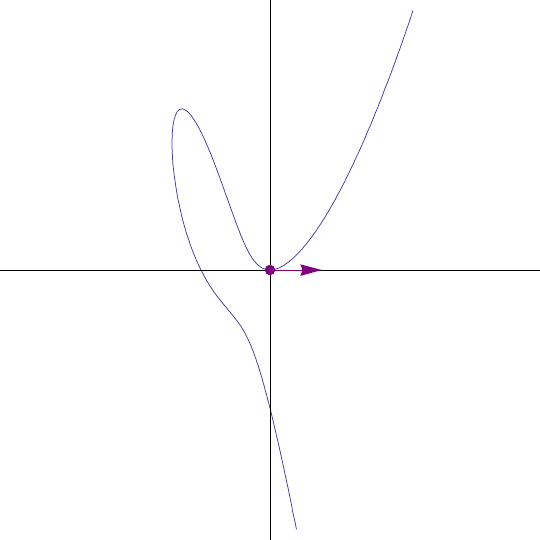}
  &|[name=TN]|\includegraphics[width=0.23\linewidth]{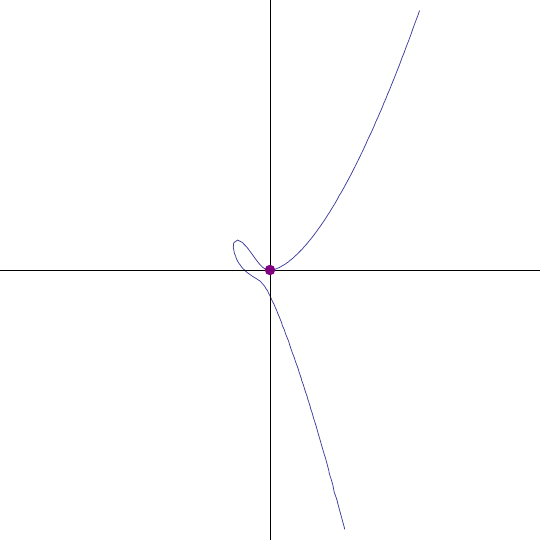}\\
  |[name=AA]|C &|[name=A]|C_{\mathbf{a}'}
  &|[name=a]|T^1:y^2+a_1xy+a_3y
  &|[name=bc]|T:y^2+(1-b)xy-by\\
  &&|[name=aLow]|= x^3+a_2x^2 &|[name=bcLow]|= x^3-bx^2\\};
  \path[->,font=\scriptsize]
  (arb.60) edge[bend left] node[above] {$\varphi_{-\alpha,0,-\beta,1}$} (00.120)
  (00.60) edge[bend left] node[above] {$\varphi_{0,-a_4'/a_3',0,1}$} (hom.120)
  (hom.60) edge[bend left] node[above] {$\varphi_{0,0,0,a_3/a_2}$} (TN.120);
\end{tikzpicture}
\caption{The procedure for putting a $\Gamma_1(5)$-structure in
  (homogeneous and non-homogeneous) Tate
  normal form}
\end{figure}

Since $(0,0)$ has order $5$ in $T^1(a_1,a_2,a_3)$ we must have
\begin{equation}\label{eqn:order5}
  [3](0,0) = [-2](0,0)
\end{equation}
where $[n]$ denotes the $\ZZ$-module structure of the elliptic curve
group law.  Explicitly expanding the left- and right-hand sides of
this equation in projective coordinates, we find that
\[
  (a_2(-a_1a_2a_3+a_3^2) : a_1a_2a_3^2-a_2^3a_3-a_3^3 : a_2^3) = (-a_2
  : 0 : 1).
\]
It follows that $a_1,a_2,a_3$ must satisfy
\begin{equation}\label{eqn:f1}
  a_2^3+a_3^2 = a_1a_2a_3
\end{equation}
in order for $(T^1(a_1,a_2,a_3), (0,0))$ to be a
$\Gamma_1(5)$-structure.  (The referee points out that one can also arrive at this condition by contemplating the geometric meaning of (\ref{eqn:order5}).)

We may compute the discriminant of
$T^1(a_1,a_2,a_3)$ as
\begin{equation}\label{eqn:discT1}
  \Delta = -8a_1^2a_3^2a_2^2 + 20a_1a_3^3a_2 - a_1^4a_3^2a_2 - 11a_3^4 + a_1^3a_3^3.
\end{equation}
Let $f^1(a_1,a_2,a_3) \defined a_2^3+a_3^2-a_1a_2a_3$ and let
\[
  B^1 \defined \ZZ[a_1,a_2,a_3,\Delta^{-1}]/(f^1).
\]
Then
\[
  \mM_1^1(5) = \Spec(B^1).
\]

We now consider $\Gamma_1(5)$-structures without distinguished
tangent vectors and produce a (non-homogeneous) Tate normal form which
is the univeral elliptic curve for $\mM_1(5)$.  Begin with a
$\Gamma_1(5)$-structure $(C,P)$ and change coordinates to put it in
homogeneous Tate normal form $T^1(a_1,a_2,a_3)$.  Now apply the
coordinate transformation $\varphi_{0,0,0,a_3/a_2}$.  (This
transformation is permissible because $(0,0)$ has order greater than
$3$.)  After applying the transformation, the coefficients of $y$ and
$x^2$ are equal.  Let
\begin{equation}\label{eqn:preTN}
  T(b,c) : y^2+(1-c)xy-by = x^3-bx^2.
\end{equation}
denote the resulting smooth Weierstrass curve.

Since $(0,0)$ has order $5$ we
know (\ref{eqn:order5}) holds; it follows that
\begin{equation}\label{eqn:f}
  b = c
\end{equation}
in (\ref{eqn:preTN}).  Abusing notation, let
\begin{equation}\label{eqn:TN}
  T(b) : y^2+(1-b)xy-by = x^3-bx^2;
\end{equation}
we call this the (\emph{non-homogeneous}) \emph{Tate normal form} of
$(C,P)$.  The discriminant of $T(b)$ is
\begin{equation}\label{eqn:discTb}
  \Delta = b^5(b^2-11b-1).
\end{equation}

Let
\[
  B \defined \ZZ[1/5, b,\Delta^{-1}].
\]
The preceding two paragraphs show that
\[
  \mM_1(5) = \Spec(B).
\]
\end{proof}

\begin{cor}\label{cor:a1u}
The moduli space $\mM_1^1(5)$ is represented by
\[
  \Spec(\ZZ[1/5, a_1,u^{\pm 1},\Delta^{-1}])
\]
where
\[
  \Delta = -11u^{12} + 64a_1u^{11} - 154a_1^2u^{10} + 195a_1^3u^9 - 135a_1^4u^8 + 46a_1^5u^7 - 4a_1^6u^6 - a_1^7u^5.
\]
\end{cor}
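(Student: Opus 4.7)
The strategy is to eliminate $a_2$ and $a_3$ from $B^1$ in favor of a single new parameter $u := a_3/a_2$, producing a Laurent polynomial presentation on generators $a_1$ and $u$.

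The first step is to establish that $a_2$ and $a_3$ are units in $B^1$. Every monomial of the discriminant (\ref{eqn:discT1}) is divisible by $a_3^2$, so inverting $\Delta$ makes $a_3$ a unit. Rewriting the defining relation $f^1 = a_2^3+a_3^2-a_1a_2a_3 = 0$ as $a_2(a_1a_3 - a_2^2) = a_3^2$ then shows that $a_2$ divides a unit, hence is also a unit. Introducing $u := a_3/a_2 \in (B^1)^\times$ and substituting $a_3 = u a_2$ into $f^1 = 0$ gives $a_2^2(a_2 - a_1 u + u^2) = 0$; cancelling the unit $a_2^2$ yields
\[
  a_2 = u(a_1-u), \qquad a_3 = u^2(a_1-u),
\]
so $a_1 - u$ is a unit as well.

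The second step is to set up mutually inverse maps $\phi : B^1 \to \ZZ[1/5, a_1, u^\pm, \Delta^{-1}]$ (with $\Delta$ the polynomial in the statement) and $\psi$ in the other direction. Define $\phi$ by $a_1 \mapsto a_1$, $a_2 \mapsto u(a_1-u)$, $a_3 \mapsto u^2(a_1-u)$, and $\psi$ by $a_1 \mapsto a_1$, $u \mapsto a_3/a_2$. The substitution above shows $\phi(f^1) = u^3(a_1-u)^2\bigl((a_1-u)+u-a_1\bigr) = 0$, so $\phi$ descends to the quotient; $\psi$ is well-defined by the first step. The composites $\phi\psi$ and $\psi\phi$ fix the generators, so they are inverse ring maps once we check compatibility with the localizations.

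The final step is a direct discriminant calculation. Substituting the formulas for $a_2$ and $a_3$ into (\ref{eqn:discT1}) and factoring yields
\[
  \phi(\Delta) = u^5(a_1-u)^4\bigl(-a_1^3 - 8a_1^2 u + 20a_1u^2 - 11u^3\bigr),
\]
and expanding produces exactly the polynomial $\Delta$ displayed in the statement of the corollary. Hence $\phi$ and $\psi$ are compatible with the localizations and the isomorphism follows. The main obstacle is the last polynomial expansion, but it is entirely mechanical once the factorization above is noticed.
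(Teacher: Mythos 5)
Your proof is correct and follows the same route as the paper's (very terse) two-sentence argument: use the relation $a_2^3+a_3^2=a_1a_2a_3$ to reparametrize via $a_2=u(a_1-u)$, $a_3=u^2(a_1-u)$ with $u$ invertible, then compute the discriminant by hand. You simply supply the details the paper omits (invertibility of $a_3$ and $a_2$ from the shape of $\Delta$, the mutually inverse ring maps, and the factorization $u^5(a_1-u)^4(-a_1^3-8a_1^2u+20a_1u^2-11u^3)$, which does expand to the stated polynomial).
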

\begin{proof}
The rings in question are isomorphic via the homomorphism
\[
  B^1\to
\ZZ[1/5,a_1,u^{\pm 1},\Delta^{-1}]
\]
taking
\[
  a_1\mapsto a_1,~a_2\mapsto u(a_1-u),~a_3\mapsto u^2(a_1-u).
\]
(Note that $u$ corresponds to $a_3/a_2$, and both $a_2$ and $a_3$ are
invertible in $B^1$.)  This takes $T^1(a_1,a_2,a_3)$ to the curve
\[
  y^2 + a_1xy + u^2(a_1-u)y = x^3+u(a_1-u)x^2
\]
whose discriminant may be computed manually.
\end{proof}

The simple structure of $\mM_1(5)$ has an immediate topological
corollary that we record here.

\begin{cor}\label{cor:K2local}
The $K(2)$-localization of $\TMF_1(5)$ is a height 2 Lubin-Tate
spectrum for the formal group law $\widehat{T(b)}$ defined over $\FF_2$:
\[
  \TMF_1(5)_{K(2)} \simeq E_2(\FF_2,\widehat{T(b)}).
\]
\end{cor}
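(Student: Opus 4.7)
The plan is to use the fact that $\mM_1(5)$ is affine (Theorem~\ref{thm:TN}), locate its supersingular locus at $p=2$, and invoke the construction of $\Otop$ to identify $\TMF_1(5)_{K(2)}$ with a Lubin--Tate spectrum.

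To identify the supersingular locus, recall that in characteristic $2$ the Hasse invariant of a Weierstrass curve $y^2 + a_1 xy + a_3 y = x^3 + \cdots$ equals $a_1$, so such a curve is supersingular iff $a_1 = 0$. For the universal Tate normal form $T(b)$ we have $a_1 = 1 - b$, so the supersingular locus of $\mM_1(5) \otimes \FF_2 = \Spec(\FF_2[b,\Delta^{-1}])$ is the closed subscheme cut out by $b - 1 = 0$. Since $\Delta(1) = -11$ is a $2$-adic unit, this defines a single $\FF_2$-point, and the fiber $T(1)\colon y^2 + y = x^3 + x^2$ is the unique supersingular elliptic curve over $\FF_2$, with formal group of height $2$.

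Completing $B = \ZZ[1/5, b, \Delta^{-1}]$ at the maximal ideal $(2, b-1)$ yields $\ZZ_2[[b-1]]$, again because $\Delta(1) \in \ZZ_2^\times$. By Serre--Tate theory, since the level $5$ structure is prime to $2$, deformations of the $\Gamma_1(5)$-structure $(T(1),(0,0))$ coincide with deformations of its formal group; hence $\ZZ_2[[b-1]]$ is canonically isomorphic to the Lubin--Tate universal deformation ring $W(\FF_2)[[u_1]]$ of $\widehat{T(1)}$. By the Goerss--Hopkins--Miller--Lurie construction of $\Otop$, sections over supersingular formal neighborhoods are Lubin--Tate spectra, giving the desired equivalence $\TMF_1(5)_{K(2)} \simeq E_2(\FF_2, \widehat{T(b)})$.

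The main point is the uniqueness of the supersingular $\FF_2$-point, which is transparent here because $\mM_1(5)$ is an \emph{affine scheme} (no hidden stacky automorphisms to produce extra components) and the condition $a_1 = 0$ is linear in $b$. Once this uniqueness is in hand, the rest of the argument is a direct application of standard derived algebraic geometry on the moduli of elliptic curves.
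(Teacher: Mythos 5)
Your proposal is correct and follows essentially the same route as the paper: both identify the supersingular locus of $\mM_1(5)$ over $\FF_2$ as the single point $b=1$ (you via the Hasse invariant $a_1 = 1-b$, the paper via the explicit $2$-series $[2]_{\widehat{T}}(z) = 2z+(b-1)z^2+\cdots$, which amounts to the same computation), observe that $\Delta$ is a unit there, and then invoke the standard Serre--Tate/Goerss--Hopkins--Miller identification of sections over the formal neighborhood with Lubin--Tate theory.
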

\begin{proof}
The $K(2)$-localization of $\TMF_1(5)$ is controlled by the
$\FF_2$-supersingular locus of $\mM_1(5)$, $\mM_1(5)^{ss}_{\FF_2}$.
The $2$-series of the formal group law for $T = T(b)$ takes the form
\[
  [2]_{\widehat{T}}(z) = 2z+(b-1)z^2+2bz^3+(b^2-2b)z^4+\cdots.
\]
(This is easily deduced from the standard formla for the formal group
law of a Weierstrass curve found, \emph{e.g.}, in
\cite[p.120]{Silverman}.) Hence $\widehat{T}$ is supersingular over $\FF_2$ if and only if
$b=1$.  Note that $\Delta(T(1)) = -11$, a unit in $\ZZ_2$ and $\FF_2$.  It follows that
\[
  \mM_1(5)^{ss}_{\FF_2} = \Spec(\FF_2).
\]

Let $E_2 = E_2(\FF_2,\widehat{T})$ with $\pi_0E_2 = \ZZ_2[[u_1]]$.  The map
\[
  \Spec \pi_0E_2 \to \mM_1(5)
\]
induced by
\[
\begin{aligned}
  B &\to \pi_0E_2\\
  b &\mapsto u_1+1.
\end{aligned}
\]
induces the $K(2)$-localization of $\TMF_1(5)$.
\end{proof}

\subsection{Representing maps $\mM_1^1(5)\to \mM^1$}
\label{subsec:rep}

 There are two important maps $\mM_1^1(5)\to \mM^1$ which we analyze.  On
the level of points, the
first is the forgetful map
\[
\begin{aligned}
  \mM_1^1(5) &\xrightarrow{f} \mM^1\\
  (C,P) &\mapsto C.
\end{aligned}
\]
The second is the quotient map
\[
\begin{aligned}
  \mM_1^1(5) &\xrightarrow{q} \mM^1\\
  (C,P) &\mapsto C/\langle P\rangle.
\end{aligned}
\]
Let $(A,\Gamma)$ denote the usual Weierstrass curve Hopf algebroid with
\[
  A = \ZZ[a_1,a_2,a_3,a_4,a_6, \Delta^{-1}],
  \Gamma = A[r,s,t]
\]
that stackifies to $\mM^1$.  (Note that $\Gamma$ does not have a
polynomial generator $\lambda$ precisely because the coordinate change
$\varphi_{r,s,t,\lambda}$ preserves tangent vectors if and only if
$\lambda=1$.)

\begin{thm}\label{thm:fqM1}
The morphisms $f$ and $q$ above are represented by
\[
\begin{aligned}
  A &\xrightarrow{f^*} B^1\\
  a_i &\mapsto
    \begin{cases}
      a_i &\text{if }i=1,2,3,\\
      0 &\text{if }i=4,6,
    \end{cases}
\end{aligned}
\]
and
\[
\begin{aligned}
  A&\xrightarrow{q^*} B^1,\\
  a_i &\mapsto
    \begin{cases}
      a_i &\text{if }i=1,2,3,\\
      5a_1^2a_2 - 10a_1a_3 - 10a_2^2 &\text{if }i=4,\\
      a_1^4a_2 - 2a_1^3a_3 - 12a_1^2a_2^2 + 19a_2^3 - a_3^2 &\text{if }i=6.
    \end{cases}
\end{aligned}
\]
The associated maps $\Gamma\to B^1$ take $r,s,t\mapsto 0$ since
$\mM^1_1(5)$ is a scheme.
\end{thm}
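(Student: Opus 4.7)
The map $f^*$ is immediate from Theorem \ref{thm:TN}: the universal $\Gamma_1(5)$-curve over $\mM_1^1(5) = \Spec B^1$ is the homogeneous Tate normal form $T^1(a_1,a_2,a_3): y^2+a_1xy+a_3y=x^3+a_2x^2$, which is already in Weierstrass form with $a_4=a_6=0$, so $f^*$ must send $a_i\mapsto a_i$ for $i\in\{1,2,3\}$ and $a_4,a_6\mapsto 0$. The tangent vector at $O$ is preserved on the nose since both $A$ and $B^1$ take their tangent vector datum from the standard Weierstrass form.

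For $q^*$ the plan is to apply V\'elu's formulas to the $5$-isogeny $T^1(a_1,a_2,a_3)\to T^1(a_1,a_2,a_3)/\langle(0,0)\rangle$. Because $5$ is invertible on the base and $\langle P\rangle$ has no $2$-torsion, V\'elu's formulas produce a Weierstrass model for the quotient whose standard invariant differential pulls back to that of $T^1$, so they simultaneously compute the effect of $q$ on the curve and on the tangent vector. First I would compute $2P$: implicit differentiation at $(0,0)$ shows the tangent line to $T^1$ there is $y=0$, whose third intersection with the curve is $(-a_2,0)$, so $2P=(-a_2,a_1a_2-a_3)$. Then I would substitute $P=(0,0)$ and $2P$ into V\'elu's formulas
\[
 A_4 = -5\sum_{Q\in\{P,2P\}} v_Q, \qquad A_6 = -(a_1^2+4a_2)\sum_{Q\in\{P,2P\}} v_Q\; -\; 7\sum_{Q\in\{P,2P\}}(u_Q+x_Qv_Q),
\]
with $g^x_Q=3x_Q^2+2a_2x_Q-a_1y_Q$, $g^y_Q=-2y_Q-a_1x_Q-a_3$, $v_Q=2g^x_Q-a_1g^y_Q$, and $u_Q=(g^y_Q)^2$. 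The $A_4$ computation is clean and gives $5a_1^2a_2-10a_1a_3-10a_2^2$, as claimed.

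The main obstacle is the $A_6$ calculation. Direct expansion produces a polynomial in $a_1,a_2,a_3$ containing $a_1a_2a_3$ and $a_3^2$ terms that do not visibly match the claimed expression $a_1^4a_2 - 2a_1^3a_3 - 12a_1^2a_2^2 + 19a_2^3 - a_3^2$; one has to remember that we are working in $B^1$, where the relation $f^1 = a_2^3+a_3^2-a_1a_2a_3 = 0$ holds, and use $a_1a_2a_3=a_2^3+a_3^2$ to replace mixed $a_1a_2a_3$ monomials by $a_2^3$ and $a_3^2$ terms. With that substitution the raw output of V\'elu collapses to the stated formula. The only real risks are sign errors in the V\'elu expressions and a subtle choice of convention for the quotient isogeny: one must use the normalization under which the isogeny pulls back the standard invariant differential (so that tangent vectors on $\mM_1^1(5)$ and $\mM^1$ correspond correctly), but once this convention is pinned down the identification of $q^*$ is a direct polynomial computation.
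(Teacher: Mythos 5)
Your proposal is correct and follows essentially the same route as the paper: $f^*$ is read off from the homogeneous Tate normal form, and $q^*$ is obtained by feeding the kernel $\langle(0,0)\rangle$ of the $5$-isogeny (with $x$-coordinates $0$ and $-a_2$, i.e.\ kernel polynomial $x(x+a_2)$) into V\'elu's formulae, the only cosmetic difference being that you use the point-by-point form of V\'elu while the paper uses the symmetric-function form. Your observation that the raw V\'elu output for the $a_6$-coefficient only matches the stated formula after imposing the relation $a_1a_2a_3=a_2^3+a_3^2$ in $B^1$ is accurate --- the raw output is $a_1^4a_2-2a_1^3a_3-12a_1^2a_2^2+6a_2^3+13a_1a_2a_3-14a_3^2$, which collapses to the claimed $19a_2^3-a_3^2$ tail --- and this is a detail the paper's proof leaves implicit.
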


Computing $q$ requires that we find a Weierstrass curve representation
of $C/\langle P\rangle$ in terms of the Weierstrass coeffcients of
$C$.  This procedure is well-studied by number theorists under the name \emph{V\'{e}lu's formulae} (see \cite{Velu},
\cite[\S 2.4]{Kohel}) and is implemented in the computer algebra system
\texttt{Magma}.  In fact, if $\phi$ is an isogeny on $C$ in
Weierstrass form with kernel $H$, then V\'{e}lu's formulae compute
Weierstrass coefficients for the target of $\phi$ in terms of the
Weierstrass coefficients of $C$ and the defining equations of the subgroup
scheme $H$.  We briefly review the formulae here for reference.

Suppose $H<C$ is a finite subgroup with ideal sheaf generated by a
monic polynomial $\psi(x)$ where $C$ is a Weierstrass curve of the
form
\[
  y^2 + a_1xy + a_3y = x^3+a_2x^2+a_4x+a_6.
\]
For simplicity, assume that the isogeny $\phi:C \to
C/H$ has odd degree.  (The even degree case can be handled as a
separate case, but we will not need it in this paper.)  Write
\[
  \psi(x) = x^n - s_1x^{n-1} +\cdots +(-1)^n s_n.
\]
Then $C/H$ has Weierstrass equation
\[
  y_H^2 + a_1x_Hy_H + a_3y_H = x_H^3+a_2x_H^2 + (a_4-5t)x_H + (a_6-b_2t-7w)
\]
where
\[\begin{aligned}
  t &= 6(s_1^2-2s_2) + b_2s_1+nb_4,\\
  w &= 10(s_1^3-3s_1s_2+3s_3) + 2(b_2(s_1^2-2s_2)+3b_4s_1+nb_6,
\end{aligned}\]
and
\[\begin{aligned}
  b_2 &= a_1^2+4a_2,\\
  b_4 &= a_1a_3+2a_4,\\
  b_6 &= a_3^2+4a_6.
\end{aligned}\]
V\'{e}lu's formulae also give explicit equations for the isogeny
$\phi:(x,y)\mapsto (x_H,y_H)$, but they are cumbersome to write down
and we will not need them here.

\begin{proof}[Proof of Theorem~\ref{thm:fqM1}]
The representation of $f$ is obvious:  $T^1(a_1,a_2,a_3)$ is already
in Weierstrass form with $a_4,a_6=0$.

Consider the case of $C = T(a_1,a_2,a_3)$ with $H = \langle P\rangle$
an order $5$ subgroup.
Using the elliptic curve addition law we see that $H$ is the subgroup
scheme of $C$ cut out by the polynomial $x(x+a_2)$.  Putting this data
into V\'{e}lu's formulae, we find that $C/H$ has Weierstrass form
\begin{equation}\label{eqn:Velu}
\begin{aligned}
  y^2 + a_1xy + a_3y = x^3 &+ a_2x^2 \\
  &+(5a_1^2a_2 - 10a_1a_3 - 10a_2^2)x \\
  &+(a_1^4a_2 - 2a_1^3a_3 -
  12a_1^2a_2^2 + 19a_2^3 - a_3^2)
\end{aligned}
\end{equation}
from which our formula for $q$ follows.
\end{proof}

\subsection{Hopf algebroids for $\mM_0^1(5)$}

Recall that a $\Gamma_0(5)$-structure $(C,H)$ consists of an elliptic
curve $C$ along with a subgroup $H<C$ of order $5$.  Unlike the moduli
problem of $\Gamma_1(5)$-structures, $\mM_0(5)$ is not representable by a scheme.
Still, it is the case that $\mM_1(5)$ admits a $C_4 =
\FF_5^\times$-action such that $\mM_0(5)$ is the geometric quotient
$\mM_1(5)//\FF_5^\times$.  For $g\in \FF_5^\times$, $g$ takes $(C,P)$
to $(C,[\tilde{g}]P)$ for $\tilde{g}$ any lift of $g$ in $\ZZ$.
Similarly, we can write $\mM_0^1(5) = \mM_1^1(5)//\FF_5^\times$.

While it is typically easier to use this quotient stack presentation of $\mM_0(5)$ and $\mM_0^1(5)$ (and this will be the perspective we will be taking in the computations later in this paper), we will note that there is also a presentation of these moduli stacks by `$(r,s,t)$' Hopf algebroids.
Let $B^1$ be as before and define
\[
  \Lambda^1 \defined B^1[r,s,t]/\sim
\]
where $\sim$ consists of the relations
\[
\begin{aligned}
  3r^2 &= 2st+a_1rs+a_3s+a_1t-2a_2r,\\
  t^2 &= r^3+a_2r^2-a_1rt-a_3t,\\
  s^6 &= -3a_1s^5 + 9rs^4 + 3a_2s^4 - 3a_1^2s^4 + 4ts^3 \\ &\phantom{=}+ 
20a_1rs^3 + 6a_1a_2s^3 + 2a_3s^3 - a_1^3s^3 + 6a_1ts^2 \\ &\phantom{=}- 
27r^2s^2 - 18a_2rs^2 + 12a_1^2rs^2 - 3a_2^2s^2 + 3a_1^2a_2s^2 \\ &\phantom{=}+ 3a_1a_3s^2 - 12rts - 4a_2ts + 2a_1^2ts - 33a_1r^2s \\ &\phantom{=}- 
20a_1a_2rs - 6a_3rs + a_1^3rs - 3a_1a_2^2s - 2a_3a_2s \\ &\phantom{=}+ 
a_1^2a_3s + 4t^2 - 2a_1rt - 2a_1a_2t + 4a_3t + 27r^3 \\ &\phantom{=}+ 27a_2r^2 - 2a_1^2r^2 + 9a_2^2r - a_1^2a_2r - a_1a_3r.
\end{aligned}
\]

\begin{thm}\label{thm:HopfAlgd}
The rings $(B^1,\Lambda^1)$ form a Hopf algebroid
stackifying to $\mM_0^1(5)$.  The
structure maps are given by
\[
\begin{aligned}
  \eta_R(a_1) &= a_1+2s\\
  \eta_R(a_2) &= a_2+3r-s^2-a_1s\\
  \eta_R(a_3) &= a_3+2t+a_1r\\
  \psi(r) &= r\otimes 1 + 1\otimes r\\
  \psi(s) &= s\otimes 1 + 1\otimes s\\
  \psi(t) &= t\otimes 1 + s\otimes r + 1\otimes t.
\end{aligned}
\]
\end{thm}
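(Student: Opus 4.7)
The plan is to realize $(B^1, \Lambda^1)$ as a sub-Hopf-algebroid of the Weierstrass Hopf algebroid $(A, \Gamma)$ with $\lambda = 1$ imposed, since morphisms in $\mM_0^1(5)$ preserve the tangent vector.  An $R$-point of $\Spec \Lambda^1$ represents a Weierstrass coordinate change $\varphi_{r,s,t,1}$ between two Tate normal forms $T^1(a_1,a_2,a_3)$ and $T^1(a_1',a_2',a_3')$ that corresponds to an isomorphism of the underlying $\Gamma_0(5)$-structures with tangent vector.  Three conditions on $(r,s,t)$ must then be imposed and identified with the three relations in the definition of $\Lambda^1$.

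The first two conditions require the target to remain in Tate normal form, i.e.\ $a_4' = 0$ and $a_6' = 0$ under the standard Weierstrass coordinate-change formulas with $a_4 = a_6 = 0$ in the source.  Applying these formulas and rearranging yields the first two displayed relations.  The third condition requires that $(0,0)$ on the target still have order $5$, i.e.\ that the target is still a $\Gamma_1(5)$-structure: this is captured by $f^1(\eta_R(a_1),\eta_R(a_2),\eta_R(a_3)) = 0$, where $f^1 = a_2^3 + a_3^2 - a_1 a_2 a_3$.  Expanding and reducing modulo $f^1(a_1,a_2,a_3) = 0$ and the first two relations produces the degree-$6$ expression stated in the theorem.

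With these relations in place, the Hopf algebroid structure maps are inherited from $(A,\Gamma)$:  $\eta_R$ is the Weierstrass coordinate-change formula with $\lambda = 1$, the comultiplication $\psi$ encodes composition of coordinate changes, and counit, coinverse, and coassociativity descend from those of the Weierstrass Hopf algebroid.  To identify the associated stack with $\mM_0^1(5)$, one uses the fact that every $\Gamma_0(5)$-structure with tangent vector admits a Tate normal form fpqc-locally (by the procedure in the proof of Theorem~\ref{thm:TN}, after choosing a generator of the order-$5$ subgroup), combined with the $\FF_5^\times$-descent from $\mM_1^1(5)$ to $\mM_0^1(5)$.

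The main obstacle is the third relation:  expanding $f^1(\eta_R(a_1),\eta_R(a_2),\eta_R(a_3))$ and simplifying modulo $f^1 = 0$ and the first two relations to recover the displayed polynomial is a lengthy, if mechanical, computation best performed (or at least verified) in a computer algebra system.  A more subtle point is checking that the three relations together truly capture the subgroup-preservation condition $\phi\langle(0,0)\rangle = \langle(0,0)\rangle$ on the target, rather than merely ensuring $(0,0)$ on the target has order $5$; this equivalence (modulo the first two relations) is what guarantees $\Spec \Lambda^1 \cong \Spec B^1 \times_{\mM_0^1(5)} \Spec B^1$ and hence that stackification yields $\mM_0^1(5)$ and not a larger groupoid.
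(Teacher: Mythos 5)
Up to the point you flag as ``subtle,'' your argument is exactly the paper's: the paper's proof is a single sentence asserting that the structure maps are the Weierstrass ones and that the relations $\sim$ are precisely the conditions for $\varphi_{r,s,t,1}$ to carry one homogeneous Tate normal curve to another. Your identification of the three relations is correct --- the first two are $a_4'=0$ and $a_6'=0$ under the Weierstrass transformation formulas with $a_4=a_6=0$, and the third is \emph{literally} $f^1(\eta_R(a_1),\eta_R(a_2),\eta_R(a_3))-f^1(a_1,a_2,a_3)$, with no reduction modulo the other relations required (one can check this coefficient-by-coefficient in powers of $s$).

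The ``more subtle point'' you raise at the end, however, is not a loose end that a longer computation ties up: it is a genuine gap, and the displayed relations do not close it. The condition $f^1(\eta_R(a_1),\eta_R(a_2),\eta_R(a_3))=0$ says only that $(0,0)$ has exact order $5$ on the target curve, equivalently that $(r,t)$ (the preimage of the target's $(0,0)$ under $\varphi_{r,s,t,1}$) is a point of exact order $5$ on the source curve. Over a geometric point of $\Spec(B^1)$ this locus is all $24$ points of $C[5]\setminus\{0\}$, with $s$ then determined by the first relation; only the $4$ points of $\langle(0,0)\rangle\setminus\{0\}$, cut out by $r(r+a_2)=0$, yield isomorphisms of the underlying $\Gamma_0(5)$-structures. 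So $\Spec(\Lambda^1)\to\Spec(B^1)$ has degree $24$ rather than the degree $4$ of $\Spec(B^1)\times_{\mM_0^1(5)}\Spec(B^1)$, and the associated groupoid identifies $(C,P,v)$ with $(C,Q,v)$ for an arbitrary order-$5$ point $Q$, so its stackification forgets the subgroup altogether. A concrete symptom: $b_2=a_1^2-2a_2$ is a $\Gamma_0(5)$-modular form, yet $\eta_R(a_1^2-2a_2)-(a_1^2-2a_2)=6(s^2+a_1s-r)$, which cannot vanish modulo the three relations since it is nonzero at $20$ of the $24$ geometric points of each fiber. Some further relation, e.g.\ $r^2+a_2r=0$ confining $(r,t)$ to the subgroup scheme appearing in the proof of Theorem~\ref{thm:fqM1}, is needed. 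To be fair, the paper's own one-line proof elides exactly this point (and the paper never uses $(B^1,\Lambda^1)$ downstream, working instead with the $\FF_5^\times$-quotient presentation); but your instinct that this is where the real content lies is right, and as written neither your argument nor the paper's establishes it.
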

\begin{proof}
The reader will note that the structure maps are identical to those
for the standard Weierstrass Hopf algebroid $(A,\Gamma)$.  The
relations $\sim$ are precisely those required so that $\varphi_{r,s,t,1}$ transforms $T^1(a_1,a_2,a_3)$ such that $a_2^3+a_3^2 = a_1a_2a_3$ into another homogeneous Tate normal curve.
\end{proof}

There are forgetful and quotient maps on $\mM_0^1(5)$ that on points take
\[
\begin{aligned}
   \mM_0^1(5)&\xrightarrow{f} \mM^1\\
   (C,H)&\mapsto C
\end{aligned}
\]
and
\[
\begin{aligned}
   \mM_0^1(5)&\xrightarrow{q} \mM^1\\
   (C,H)&\mapsto C/H.
\end{aligned}
\]
(We elide the tangent vectors for concision.)

\begin{cor}\label{cor:fqM1}
The maps $f$ and $q$ on $\mM_0^1(5)$ are represented by
\[
\begin{aligned}
   (A,\Gamma)&\xrightarrow{f^*} (B^1,\Lambda^1)\\
   a_i &\mapsto
   \begin{cases}
     a_i &\text{if }i=1,2,3,\\
     0 &\text{if }i=4,6
   \end{cases}\\
   r,s,t&\mapsto r,s,t
\end{aligned}
\]
and
\[
\begin{aligned}
  (A,\Gamma)&\xrightarrow{q^*} (B^1,\Lambda^1)\\
  a_i &\mapsto
    \begin{cases}
      a_i &\text{if }i=1,2,3,\\
      5a_1^2a_2 - 10a_1a_3 - 10a_2^2 &\text{if }i=4,\\
      a_1^4a_2 - 2a_1^3a_3 - 12a_1^2a_2^2 + 19a_2^3 - a_3^2 &\text{if }i=6.
    \end{cases}\\
   r,s,t&\mapsto r,s,t
\end{aligned}
\]
\end{cor}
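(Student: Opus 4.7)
The plan is to verify the two components of the statement separately: the maps on the object ring $A$, and on the cooperation generators $r, s, t$.

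For the $A$-component, I would appeal to Theorem~\ref{thm:fqM1}. Since $(B^1, \Lambda^1)$ has $B^1$ as its object ring, the atlas is $\pi: \mM_1^1(5) = \Spec B^1 \to \mM_0^1(5)$, and both $f, q$ on $\mM_0^1(5)$ compose with $\pi$ to give the corresponding $f, q$ on $\mM_1^1(5)$ studied in Theorem~\ref{thm:fqM1}. Passing to pullback on coordinate rings, this forces $f^*, q^*: A \to B^1$ to coincide with the maps computed there, yielding the displayed formulas on the $a_i$.

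For the $(r, s, t)$-component, recall that $\Gamma = A[r, s, t]$ parametrizes Weierstrass coordinate changes $\varphi_{r, s, t, 1}$, and by Theorem~\ref{thm:HopfAlgd}, $\Lambda^1$ is a quotient of $B^1[r, s, t]$ by the relations ensuring such a change preserves homogeneous Tate normal form. The claim $f^*(r, s, t) = (r, s, t)$ is then tautological: the forgetful functor on morphisms sends a Tate-normal-form-preserving coordinate change to itself as a Weierstrass coordinate change. The claim $q^*(r, s, t) = (r, s, t)$ is a naturality statement: given an isomorphism $\varphi_{r, s, t, 1}: (C, H) \to (C', H')$ of $\Gamma_0(5)$-structures, the induced isomorphism $C/H \to C'/H'$ (uniquely determined by the universal property of the quotient by a finite flat subgroup) must be given by the same parameters $(r, s, t)$.

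This naturality verification is the main obstacle. The cleanest justification is that Vélu's construction is functorial in the pair $(C, H)$: both the Weierstrass coefficients of $C/H$ and the formulas for the isogeny $(x, y) \mapsto (x_H, y_H)$ are polynomial expressions in the Weierstrass coefficients of $C$ and the kernel polynomial $\psi$. Applying $\varphi_{r, s, t, 1}$ to $C$ before passing to the quotient produces an isomorphic pair whose Vélu quotient carries the same coordinate change to $C'/H'$. A direct inspection of the recipe $x_H = x + (\text{polynomial in } a_i, s_j)$ and the corresponding formula for $y_H$, under the substitution $x \mapsto x + r$, $y \mapsto y + sx + t$, confirms that the induced substitution on $(x_H, y_H)$ has the same parameters $(r, s, t)$, completing the plan.
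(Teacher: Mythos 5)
Your proposal is correct and follows the same route as the paper, whose entire proof is the one-line observation that the corollary follows from Theorems~\ref{thm:fqM1} and~\ref{thm:HopfAlgd}. You simply make explicit the two points the paper leaves implicit -- that the maps on $B^1$ are inherited from Theorem~\ref{thm:fqM1} via the atlas $\Spec(B^1) \to \mM_0^1(5)$, and that $q^*(r,s,t)=(r,s,t)$ holds because V\'{e}lu's construction is natural in the pair $(C,H)$ -- and your naturality argument is sound.
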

\begin{proof}
This is a consequence of Theorems \ref{thm:fqM1} and \ref{thm:HopfAlgd}.
\end{proof}

\subsection{The Atkin-Lehner dual}\label{sec:AL}
We will now compute the Atkin-Lehner dual 
$$t:\mM_0^1(5)\to \mM_0^1(5). $$
Each $\Gamma_0(5)$-structure
$(C,H)$ can also be represented as a pair $(C,\phi)$ where $\phi:C\to C'$
has kernel $H$.  On points, the Atkin-Lehner dual takes
\[
\begin{aligned}
  \mM_0^1(5) &\xrightarrow{t} \mM_0^1(5)\\
  (C,\phi) &\mapsto (C/H,\widehat{\phi})
\end{aligned}
\]
where $\widehat{\phi}$ is the dual isogeny to $\phi$.

We can lift $t$ to stacks closely related to $\mM_1^1(5)$.  Recall
(\cite[\S 2.8]{KM}) that for each
$\Gamma_0(5)$-structure $(C,\phi)$ there is an associated
scheme-theoretic Weil pairing
\[
  \langle -,- \rangle_\phi: \ker \phi \times \ker \widehat{\phi} \to \mu_5.
\]
Choose a primitive fifth root of unity $\zeta$.  For a
$\Gamma_1(5)$-structure $(C,P)$ let $(C,\phi_P)$ denote the associated
$\Gamma_0(5)$-structure where $\phi_P:C\to C'$ is an isogeny with kernel
$\langle P\rangle$.  If we work in $\mM_1^1(5)_\zeta$,
i.e.~$\mM_1^1(5)$ considered as a $\ZZ[\frac{1}{5},\zeta]$-scheme, then there is a unique $Q\in \ker
\widehat{\phi_P}$ such that $\langle P,Q\rangle_\phi = \zeta$.  We
define
\[
  t_\zeta:\mM_1^1(5)_\zeta \to \mM_1^1(5)_\zeta
\]
in the obvious way so that $t_\zeta(C,P) = (C',Q)$.

The maps $t$ and $t_\zeta$ fit in the commutative diagram
\[\xymatrix{
  \mM_1^1(5)_\zeta \ar[r]^{t_\zeta}\ar[d] &\mM_1^1(5)_\zeta\ar[d]\\
  \mM_0^1(5)_\zeta \ar[r]_t &\mM_0^1(5)_\zeta
}\]
where the vertical maps take $(C,P)$ to $(C,\phi_P)$.

We can gain some computational control over $t$ via the following
method.  First, recall from Corollary \ref{cor:a1u} that for each homogeneous Tate normal curve
$T^1(a_1,a_2,a_3)$ there is a unit $u$ such that $a_2 = u(a_1-u)$ and
$a_3 = u^2(a_1-u)$.  Abusing notation, denote the same curve by $T^1(a_1,u)$,
and let $H$ denote the canonical cyclic subgroup of order 5 generated
by $(0,0)$.  The defining polynomial for $H$ is $x(x+u(a_1-u))$.
Denote the isogeny with kernel $H$ by $\phi$.  Note that the range of
$\phi$ is the curve $C/H$ given by V\'{e}lu's formulae in (\ref{eqn:Velu}).

Using Kohel's formulas \cite{Kohel} (as implemented by the computer algebra
system \texttt{Magma}), we can determine
that the kernel of $\widehat{\phi}$ is the subgroup scheme determined
by
\[
  f \defined x^2 + (a_1^2 - a_1u + u^2)x + \frac{1}{5}(a_1^4 - 7a_1^3u - 11a_1^2u^2 + 47a_1u^3 - 29u^4).
\]
Then over the ring
$R \defined \ZZ[\frac{1}{5},\zeta][a_1,u^\pm]$ the polynomial $f$ splits and we find that 
\[
  (\ker \widehat{\phi})(R) = \{\infty, (x_0,y_{00}),(x_0,y_{01}),(x_1,y_{10}),(x_1,y_{11})\} 
\]
where
\[
\begin{aligned}
  x_0 &= \frac{1}{5}(\zeta^3 + \zeta^2 - 2)a_1^2 + \frac{1}{5}(9\zeta^3 +
  9\zeta^2 + 7)a_1u + \frac{1}{5}(-11\zeta^3 - 11\zeta^2 - 8)u^2,\\
  x_1 &= \frac{1}{5}(-\zeta^3 - \zeta^2 - 3)a_1^2 + \frac{1}{5}(-9\zeta^3 -
  9\zeta^2 - 2)a_1u + \frac{1}{5}(11\zeta^3 + 11\zeta^2 + 3)u^2,\\
  y_{00} &= \frac{1}{5}(\zeta^2 + 2\zeta + 2)a_1^3 +
  \frac{1}{5}(\zeta^3 + 7\zeta^2 + 17\zeta + 5)a_1^2u \\
      &\phantom{=} + \frac{1}{5}(9\zeta^3 - 29\zeta^2 - 31\zeta - 14)a_1u^2 + 
    \frac{1}{5}(-11\zeta^3 + 22\zeta^2 + 11\zeta + 8)u^3,\\
y_{01} &= \frac{1}{5}(-\zeta^3 - 2\zeta^2 - 2\zeta)a_1^3 +
  \frac{1}{5}(-10\zeta^3 - 16\zeta^2 - 17\zeta - 12)a_1^2u \\
      &\phantom{=} + \frac{1}{5}(2\zeta^3 + 40\zeta^2 + 31\zeta + 
    17)a_1u^2 + \frac{1}{5}(11\zeta^3 - 22\zeta^2 - 11\zeta -
    3)u^3,\\
  y_{10} &= \frac{1}{5}(2\zeta^3 + \zeta + 2)a_1^3 +
  \frac{1}{5}(16\zeta^3 - \zeta^2 + 6\zeta + 4)a_1^2u \\
      &\phantom{=} + \frac{1}{5}(-40\zeta^3 - 9\zeta^2 - 38\zeta - 23)a_1u^2 + 
    \frac{1}{5}(22\zeta^3 + 11\zeta^2 + 33\zeta + 19)u^3,\\
  y_{11} &= \frac{1}{5}(-\zeta^3 + \zeta^2 - \zeta + 1)a_1^3 +
  \frac{1}{5}(-7\zeta^3 + 10\zeta^2 - 6\zeta - 2)a_1^2u \\
      &\phantom{=} + \frac{1}{5}(29\zeta^3 - 2\zeta^2 + 38\zeta + 
    15)a_1u^2 + \frac{1}{5}(-22\zeta^3 - 11\zeta^2 - 33\zeta - 14)u^3.
\end{aligned}
\]

Choose $(x_0,y_{00})$ as a preferred generator of $\widehat{H}$.  Let
$\zeta' = \langle (0,0), (x_0,y_{00})\rangle_\phi$.  Then
applying the method of Theorem \ref{thm:TN} we can put
$(C/H,(x_0,y_{00}))$ in homogeneous Tate normal form.  What we find is
a curve $T^1(t_{\zeta'}^*(a_1),t_{\zeta'}^*(u))$ with
\begin{equation}\label{eqn:AL}
\begin{aligned}
  t_{\zeta'}^*(a_1) &= \frac{1}{5}(-8\zeta^3 - 6\zeta^2 - 14\zeta - 7)a_1 +
  \frac{1}{5}(14\zeta^3 - 2\zeta^2 + 12\zeta + 6)u,\\
  t_{\zeta'}^*(u) &= \frac{1}{5}(-\zeta^3 - 7\zeta^2 - 8\zeta - 4)a_1 + \frac{1}{5}(8\zeta^3 + 6\zeta^2 + 14\zeta + 7)u.
\end{aligned}
\end{equation}

\begin{rmk}\label{rmk:zetaChoice}
We could produce similar formulas for any of the $(x_i,y_{ij})$ and
these would correspond to different choices of $\zeta'$ for the
Atkin-Lehner dual on $\Gamma_1(5)$-structures.  The applications below
will be invariant of this choice.
\end{rmk}

Equation (\ref{eqn:AL}) permits a description of the Atkin-Lehner dual
on the ring of $\Gamma_0(5)$-modular forms.  We refer the reader to
\cite[\S 3.1.1]{Hida} for a thorough description of modular forms as
global sections.  Recall briefly that for a
congruence subgroup $\Gamma\le SL_2(\ZZ)$, level $\Gamma$-modular
forms are precisely the global sections of (the tensor powers of) the
dualizing sheaf $\omega^{\otimes *}$ on the moduli stack $\mM(\Gamma)$
of level $\Gamma$-structures,
\[
  MF(\Gamma) = H^0(\mM(\Gamma);\omega^{\otimes *})
\]
Let $MF(\Gamma_1(5))_\zeta$ denote the ring of $\Gamma_1(5)$-modular
forms over the ring $\ZZ[\frac{1}{5},\zeta]$; it is isomorphic to
$\ZZ[\frac{1}{5},\zeta][a_1,u^\pm,\Delta^{-1}]$ since $\mM_1(5)$ is a scheme.  Then
\[
  MF(\Gamma_0(5)) = (MF(\Gamma_1(5))_\zeta^{Gal})^{\FF_5^{\times}}
\]
where $Gal$ denotes the copy of $\FF_5^\times$ acting on coefficients.

\begin{thm}\label{thm:MFAL}
The map $t^*:MF(\Gamma_0(5))\to MF(\Gamma_0(5))$ induced by the
Atkin-Lehner dual is the restriction of the unique map on
$MF(\Gamma_1(5)_\zeta)$ determined by (\ref{eqn:AL}).
\end{thm}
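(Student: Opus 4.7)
The plan is to produce an endomorphism $\varphi$ of $MF(\Gamma_1(5))_\zeta$ from the formulas (\ref{eqn:AL}), identify $\varphi$ with the pullback $t_{\zeta'}^*$, and then deduce from the commutative lift of $t$ to $t_{\zeta'}$ that $t^*$ is the restriction of $\varphi$ to $MF(\Gamma_0(5))$.

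By Corollary \ref{cor:a1u}, $MF(\Gamma_1(5))_\zeta \cong \ZZ[\tfrac{1}{5},\zeta][a_1,u^{\pm 1},\Delta^{-1}]$, so any ring endomorphism is determined by its effect on the generators $a_1$ and $u$, provided $\Delta$ maps to a unit; this gives the uniqueness half of the theorem at once. For existence, I would invoke the representability statement of Theorem \ref{thm:TN}: the $\Gamma_1^1(5)$-structure $(C/H,(x_0,y_{00}),v)$ constructed above from V\'{e}lu's formulae (together with a suitable tangent vector $v$) is classified by a unique ring map $\varphi : MF(\Gamma_1(5))_\zeta \to MF(\Gamma_1(5))_\zeta$. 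By the Tate-normal-form algorithm of Theorem \ref{thm:TN} this $\varphi$ sends $a_1$ and $u$ to the expressions in (\ref{eqn:AL}), and in particular sends $\Delta$ to a unit.

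To identify $\varphi$ with $t_{\zeta'}^*$ I would appeal to the universal property of $\mM_1^1(5)_\zeta$: both are the ring map classifying the $\Gamma_1^1(5)$-structure produced by the Tate-normal-form algorithm applied to $(C/H,(x_0,y_{00}),v)$. The commutative square of stacks displayed above Remark \ref{rmk:zetaChoice} then dualizes to a commuting square of rings expressing $t^*$ as the restriction of $t_{\zeta'}^* = \varphi$ along the inclusion $MF(\Gamma_0(5)) \hookrightarrow MF(\Gamma_1(5))_\zeta$, which is exactly the content of the theorem.

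The only subtlety worth flagging as a potential obstacle is the dependence on the preferred generator $(x_0,y_{00})$ of $\ker\widehat{\phi}$ (equivalently, on the choice of $\zeta' = \langle (0,0),(x_0,y_{00})\rangle_\phi$). Different choices produce different lifts $t_{\zeta'}$ and hence distinct endomorphisms $\varphi$ of $MF(\Gamma_1(5))_\zeta$, but all restrict to the same $t^*$ on $MF(\Gamma_0(5))$ because $t$ on $\mM_0^1(5)_\zeta$ depends only on the subgroup $H$, not on a choice of generator. This is the content of Remark \ref{rmk:zetaChoice}, and one can double-check it as a sanity test by confirming that $\varphi|_{MF(\Gamma_0(5))}$ lands in $MF(\Gamma_0(5))$, which follows formally from the commutativity of the square.
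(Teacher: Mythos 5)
Your proposal is correct and follows essentially the same route as the paper, which gives no separate proof of Theorem~\ref{thm:MFAL} but instead lets it follow from the preceding construction: the formulas (\ref{eqn:AL}) are obtained by applying the Tate normal form algorithm of Theorem~\ref{thm:TN} to $(C/H,(x_0,y_{00}))$, the displayed commutative square identifies $t^*$ as the restriction of $t_{\zeta'}^*$ along $MF(\Gamma_0(5))\hookrightarrow MF(\Gamma_1(5))_\zeta$, and Remark~\ref{rmk:zetaChoice} disposes of the dependence on the choice of generator of $\ker\widehat{\phi}$. Your explicit appeal to representability for uniqueness and your sanity check that the restriction lands in $MF(\Gamma_0(5))$ are just slightly more careful articulations of the same argument.
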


\section{The homotopy groups of $\TMF_0(5)$}
\label{sec:HFP}

By \'etale descent along the cover
$$ \mM_1(5) \rightarrow \mM_1(5)//\FF_5^\times = \mM_0(5). $$
we have $\TMF_0(5) \simeq \TMF_1(5)^{h\FF_5^\times}$.  
and we may thus compute the associated 
homotopy point spectral sequence
\[
  E_2^{s,t} = H^s(\FF_5^\times;\pi_t \TMF_1(5)) \implies \pi_{t-s} \TMF_0(5).
\]
The referee indicates that the first computation of this spectral sequence actually dates back to as early as 2003, with calculations of Mahowald and Rezk.  Hill, Hopkins, and Ravenel computed $\pi_* \TMF_0(5)$ in \cite{HHRunpub}.
As a self-contained homotopy fixed point spectral sequence computation of $\pi_* \TMF_0(5)$ is not yet available in the literature, we reproduce it in this section (though we note that the homotopy fixed point spectral sequence is actually a localization of the slice spectral sequence, and therefore the structure of this spectral sequence can actually be culled from \cite{HHRunpub}).

\subsection{Computation of the $E_2$-term}\label{sec:E2comp}

Consider the representation of $\mM^1_1(5)$ implicit in Corollary
\ref{cor:a1u}.  In the context of spectral sequence computations, we
will let $x = u$ and let $y = a_1-u$.  Let $\sigma$ denote the reduction of
2 in $\FF_5^\times$, a generator.

\begin{lemma}\label{lemma:sigmaAction}
The action of
$\FF_5^\times$ on $\pi_*\TMF_1(5) = \ZZ[1/5,x,y,\Delta^{-1}]$ is
determined by
\begin{equation}\label{eqn:action}\begin{aligned}
  \sigma \cdot x &= y\\
  \sigma \cdot y &= -x.
\end{aligned}\end{equation}
\end{lemma}
\begin{proof}
Consider the Tate normal curve $T$ with $a_1 = x+y$, $a_2 = xy$, and $a_3
= x^2y$.  (This is the Tate normal curve of Corollary \ref{cor:a1u}
under our coordinate change $x=u$, $y = a_1-u$.)  We can compute
$[2](0,0) = (-xy,xy^2)$.  The lemma then amounts to noting that the
Tate normal curve associated with the $\Gamma_1(5)$-structure
$((-xy,xy^2), T)$ has $a_1 = y-x$, $a_2 = -xy$, $a_3 = -xy^2$.
\end{proof}

Note that we may manually compute the discriminant as
\[
  \Delta = x^5y^5(x^2-11xy-y^2),
\]
so $x$ and $y$ are invertible elements of $\pi_*\TMF_1(5)$.

\begin{thm}\label{thm:HFPE2}
The $E_2$-term of the homotopy fixed point spectral sequence for
$\TMF_0(5)$ is given by
\[
 H^*(\FF_5^\times;\pi_*\TMF_1(5)) = \ZZ[1/5][b_2,b_4,\delta,\eta,\nu,\gamma,\xi,\Delta^{-1}]/\sim
\]
where $ \Delta = \delta^2(b_4-11\delta)$ and  $\sim$ consists of the relations

\begin{figure}
\includegraphics[width=\textwidth]{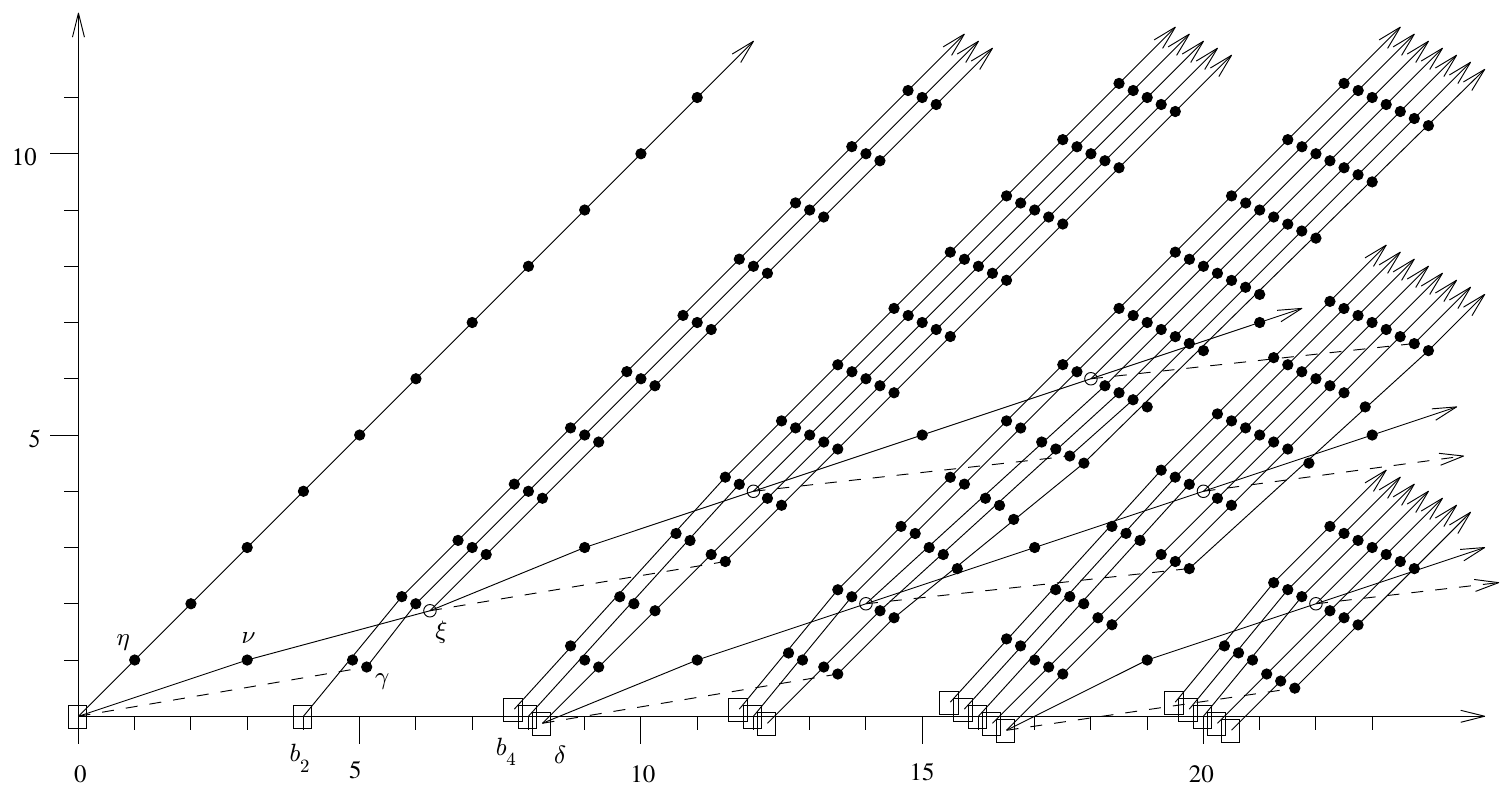}
\caption{A delocalization of the $E_2$-term of the homotopy fixed point spectral sequence for $\TMF_0(5)$ (the actual $E_2$-term is obtained from this figure by inverting $\Delta$).}\label{fig:HFPSSE2}
\end{figure}

\begin{center}
\begin{tabular}{cc}
\begin{minipage}{0.4\linewidth}
\[\begin{aligned}
  b_4^2 &= b_2^2 \delta-4\delta^2,\\
  2\eta &= 0,\\
  2\nu &= 0,\\
  2\gamma &= 0,\\
  4\xi &= 0,\\
  \nu^2 &= 2\xi,\\
  \gamma^2 &= (b_2^2+\delta)\eta^2,
\end{aligned}\]
\end{minipage}
&
\begin{minipage}{0.4\linewidth}
\[\begin{aligned}
  \eta\nu &= 0,\\
  b_2\nu &= 0,\\
  b_2\xi &= \delta\eta^2,\\
  \nu\gamma &= 0,\\
  b_4\xi &= b_2^2\xi + 2\delta\xi + \delta\eta\gamma,\\
  b_4\nu &= 0,\\
  b_4\gamma &= (b_4+\delta)b_2\eta, \\
 \gamma b_2 & = \eta (b_2^2 + b_4). \\
\end{aligned}\]
\end{minipage}
\end{tabular}
\end{center}
The generators lie in bidegrees $(t-s,s)$:
\begin{align*}
|b_2| & = (4,0), \\
|b_4| = |\delta| & =
(8,0), \\
|\eta| & = (1,1), \\
|\nu|  & = (3,1), \\
|\gamma|  & = (5,1), \\ 
|\xi| & = (6,2).
\end{align*}
\end{thm}

Figure~\ref{fig:HFPSSE2} shows a picture of the subring of the $E_2$-term of the homotopy fixed point spectral sequence for $\TMF_0(5)$ generated (as a $\ZZ[1/5]$-algebra) by 
$$ b_2, b_4, \delta, \eta, \nu, \gamma, \xi. $$
The full $E_2$-term is obtained after inverting $\Delta$.  Here and elsewhere in this paper, we use boxes $\square$ to represent $\ZZ$'s (or $\ZZ[1/5]$'s in this case), filled circles $\bullet$ to represent $\ZZ/2$'s, and open circles $\circ$ to represent $\ZZ/4$'s.

The proof of Theorem~\ref{thm:HFPE2} is a routine but fairly involved calculation following from
(\ref{eqn:action}).  We will establish this theorem with a series of lemmas.  Let $T_*$ denote the graded subring of $\pi_*\TMF_1(5)$ generated by $x$ and $y$, so that 
$$ \pi_* \TMF_1(5) = T_*[\Delta^{-1}]. $$
For a $\FF_5^\times$-module $M$, we shall use $H^*(M)$ to denote $H^*(\FF_5^\times; M)$.

The first step is to determine the structure of $T_*$ as an
$\FF_5^\times$-module.
We begin by setting some notation for $\FF_5^\times$-modules.  Let
$\ZZ[1/5]$ denote the $\FF_5^\times$-module with trivial action, let $\ZZ[1/5](-1)$ denote $\ZZ[1/5]$ with the sign action
$\sigma \cdot n = -n$, let $\tau = \ZZ[1/5]^2$
with the twist action $\sigma\cdot (m,n) = (n,m)$, and let $\psi =
\ZZ[1/5]^2$ with the cycle action $\sigma\cdot (m,n) = (n,-m)$.  

\begin{lemma}
The graded ring $T_*$ admits the following additive decomposition as an $\FF_5^\times$-module:
\begin{gather*}
T_{8n} =  \tau \{ x^{4n}, x^{4n-1}y,\ldots,x^{2n+1}y^{2n-1} \} \oplus \ZZ[1/5]\{x^{2n}y^{2n} \}, \\ 
T_{8n+4} = \tau \{ x^{4n+2},x^{4n+1}y,\ldots,x^{2n+2}y^{2n} \} \oplus \ZZ[1/5](-1) \{ x^{2n+1}y^{2n+1} \}, \\
T_{4n+2} = \psi \{ x^{2n+1},x^{2n}y,\ldots,x^{n+1}y^n \}.
\end{gather*}
\end{lemma}

Define the following $\FF_5^\times$-invariants in $T_*$:
\begin{align*}
b_2 & \defined x^2+y^2, \\
b_4 & \defined x^3y-xy^3, \\
\delta & \defined x^2y^2. 
\end{align*}
(Warning:  the $b_2$ and $b_4$ here are not related to the $b_2$ and
$b_4$ mentioned in relation to V\'{e}lu's formulae, or the $b_2$ and $b_4$ traditionally used in the theory of elliptic curves.)  Note that
$\delta$ is almost a cube root of $\Delta$: we have
$$ \Delta = \delta^2(b_4-11\delta). $$
The following lemma is fairly easily checked.

\begin{lemma}
The ring of $\FF_5^\times$-invariants of $T_*$ admits the following presentation:
$$ H^0(T_*) = \ZZ[1/5][b_2, b_4, \delta]/(b_4^2 - b_2^2\delta + 4\delta^2). $$  
\end{lemma}

We now turn our attention to the higher cohomology of $T_*$.  The following lemma gives an additive description of these cohomology groups, as a module over
$$ H^*(\ZZ[1/5]) = \ZZ[1/5,\beta]/(4\beta) $$
(where $\beta$ lies in $H^2$).

\begin{lemma}\label{eq:addHFPE2}
There is an \emph{additive} isomorphism of $H^*(\ZZ[1/5])$-modules:
$$ H^*(T_*) \cong
\ZZ[1/5][b_2, b_4, \delta, \eta, \nu, \gamma, \beta]/\sim'
$$
where $\sim'$ consists of the relations
\begin{center}
\begin{tabular}{cc}
\begin{minipage}{0.4\linewidth}
\[\begin{aligned}
  b_4^2 &= b_2^2 \delta-4\delta^2,\\
  2\eta &= 0,\\
  2\nu &= 0,\\
  2\gamma &= 0,\\
  4\beta &= 0,\\
  2 b_2\beta & = 0, \\
  2 b_4 \beta & = 0, \\
  (*) \nu^2 &= 0,\\
  (*) \gamma^2 &= 0,
\end{aligned}\]
\end{minipage}
&
\begin{minipage}{0.4\linewidth}
\[\begin{aligned}
  (*) \eta\nu &= 0,\\
  (*) b_2\nu &= 0,\\
  (*) \eta^2 &= 0,\\
  (*) \nu\gamma &= 0,\\
  (*) \eta\gamma & = 0,\\
  (*) b_4\nu &= 0,\\
  (*) b_4\gamma &= 0, \\
 (*) \gamma b_2 & = 0. 
\end{aligned}\]
\end{minipage}
\end{tabular}
\end{center}
\end{lemma}

Note, the relations marked $(*)$ in the preceding lemma are not actual multiplicative relations in $H^*(T_*)$, they just yield the correct additive answer.  To properly compute the ring structure of $H^*(T_*)$, we need to replace these `fake' relations with true relations.

\begin{proof}
The invariants introduced in the previous lemma allow for a more convenient additive description of $T_*$ as an $\FF_5^\times$-module:
\begin{gather*}
T_{8n} =  \tau\{ x^2 \} \{ b_2^{2n-1}, b_4b_2^{2n-3}, \delta b_2^{2n-3}, b_4\delta b_2^{2n-5}, \delta^2b_2^{2n-5}, \ldots \} \\
\oplus \tau\{ x^3y\}\{ \delta^{n-1} \} \oplus \ZZ[1/5]\{\delta^n \}, \\ 
T_{8n+4} = \tau \{x^2\}\{ b_2^{2n}, b_4b_2^{2n-2}, \delta b_2^{2n-2}, b_4\delta b_2^{2n-4}, \delta^2b_2^{2n-4}, \ldots \}  \\
\oplus \ZZ[1/5](-1) \{ xy \} \{\delta^n \}, \\
T_{8n+2} = \psi \{ x\}\{ b_2^{2n}, b_4b_2^{2n-2}, \delta b_2^{2n-2}, b_4\delta b_2^{2n-4}, \delta^2b_2^{2n-4}, \ldots \}, \\
T_{8n+6} = \psi \{ x\}\{ b_2^{2n+1}, b_4b_2^{2n-1}, \delta b_2^{2n-1}, b_4\delta b_2^{2n-3}, \delta^2b_2^{2n-3}, \ldots \} \\
\oplus \psi\{x^3\}\{\delta^n\}. 
\end{gather*}

To compute the higher cohomology $H^*(T_*)$ we begin by noting
that
\[\begin{aligned}
  H^*(\ZZ[1/5]) &= \ZZ[1/5][\beta]/4\beta,\\
  H^*(\ZZ[1/5](-1)) &= \ZZ[1/5][\beta]/2\beta[1],\\
  H^*(\tau) &= \ZZ[1/5][\beta]/2\beta,\\
  H^*(\psi) &= \ZZ[1/5][\beta]/2\beta[1]
\end{aligned}\]
where $\beta$ has cohomological degree 2, $[1]$ denotes a cohomological degree
shift by $1$, and each cohomology ring has
the obvious $H^*(\ZZ[1/5])$-module structure.  
We define
\begin{align*}
\eta & \in H^{1}(\psi\{x\}), 
\\
\nu & \in H^1(\ZZ[1/5](-1)\{xy\}),
\\
\gamma & \in H^1(\psi\{x^3\})
\end{align*}
to be the unique non-trivial elements in their respective cohomology groups.
We then have the following additive presentation of $H^*(T_*)$.
\begin{gather*}
H^*(T_{8n}) =  \ZZ[1/5][\beta]/(2\beta) \{ b_2^{2n}, b_4b_2^{2n-2}, \delta b_2^{2n-2}, b_4\delta b_2^{2n-4}, \delta^2b_2^{2n-4}, \ldots b_4 \delta^{n-1} \} \\
\oplus \ZZ[1/5][\beta]/(4\beta)\{\delta^n \}, \\ 
H^*(T_{8n+4}) = \ZZ[1/5][\beta]/(2\beta)\{ b_2^{2n+1}, b_4b_2^{2n-1}, \delta b_2^{2n-1}, b_4\delta b_2^{2n-3}, \delta^2b_2^{2n-3}, \ldots \}  \\
\oplus \ZZ[1/5][\beta]/(2\beta) \{\nu \delta^n \}, \\
H^*(T_{8n+2}) = \ZZ[1/5][\beta]/(2\beta)\{ \eta b_2^{2n}, \eta b_4b_2^{2n-2}, \eta \delta b_2^{2n-2}, \eta b_4\delta b_2^{2n-4}, \eta \delta^2b_2^{2n-4}, \ldots \}, \\
H^*(T_{8n+6}) = \ZZ[1/5][\beta]/(2\beta) \{ \eta b_2^{2n+1}, \eta b_4b_2^{2n-1}, \eta \delta b_2^{2n-1}, \eta b_4\delta b_2^{2n-3}, \eta \delta^2b_2^{2n-3}, \ldots \} \\
\oplus \ZZ[1/5][\beta]/(2\beta)\{\gamma \delta^n\}. 
\end{gather*}
The statement of the lemma follows.
\end{proof}

The following proposition fills in the multiplicative structure missing from the previous lemma.

\begin{prop}\label{prop:HFPE2}
There is an isomorphism of rings
$$ H^*(T_*) \cong
\ZZ[1/5][b_2, b_4, \delta, \eta, \nu, \gamma, \beta]/\sim
$$
where $\sim$ consists of the relations
\begin{center}
\begin{tabular}{cc}
\begin{minipage}{0.4\linewidth}
\[\begin{aligned}
  b_4^2 &= b_2^2 \delta-4\delta^2,\\
  2\eta &= 0,\\
  2\nu &= 0,\\
  2\gamma &= 0,\\
  4\beta &= 0,\\
  \nu^2 &= 2\delta\beta,\\
  \gamma^2 &= (b_2^2+\delta)\eta^2, \\
  \eta\nu &= 0,
\end{aligned}\]
\end{minipage}
&
\begin{minipage}{0.4\linewidth}
\[\begin{aligned}
  b_2\nu &= 0,\\
  \eta^2 &= b_2\beta,\\
  \nu\gamma &= 0,\\
  \eta\gamma & = (b_4+b_2^2+2\delta)\beta,\\
  b_4\nu &= 0,\\
  b_4\gamma &= (b_4+\delta)b_2\eta, \\
 b_2\gamma & = (b_2^2+b_4)\eta. 
\end{aligned}\]
\end{minipage}
\end{tabular}
\end{center}
\end{prop}

Note that we are able to drop the relations
\begin{align*}
  2 b_2\beta & = 0, \\
  2 b_4 \beta & = 0
\end{align*}
appearing in the left column of Lemma~\ref{eq:addHFPE2}, as they follow from the relations
\begin{align*} 
\eta^2 &= b_2\beta,\\
\eta\gamma & = (b_4+b_2^2+2\delta)\beta,
\end{align*}
respectively.
 
\begin{proof}
The following multiplicative relations are immediately deduced from dimensional considerations:
\begin{align*}
\eta\nu & = 0, \\
b_2 \nu & = 0, \\
\nu\gamma & = 0. \\
\end{align*}
Moreover, the ring structure on $T_*$ restricts to give a pairing
$$ H^1(\ZZ[1/5](-1)\{xy\}) \otimes H^0(\tau\{x^2\}) \rightarrow H^1(\tau\{x^3y\}) = 0 $$
which implies
$$ \nu b_4 = 0. $$

In order to determine most of the remaining relations, we observe that 
\begin{align*}
H^0(T_2/2) & = \FF_2\{ v_1 \}, \\
H^0(T_4/2) & = \FF_2\{ v_1^2, \delta^{1/2} \}, \\
H^0(T_6/2) & = \FF_2\{ v_1^3, v_1\delta^{1/2} \}
\end{align*}
with
\begin{align*}
v_1 & := x + y, \\
\delta^{1/2} & := xy.
\end{align*}
Note that the mod $2$ reductions of $b_2$, $b_4$, and $\delta$ are $v_1^2$, $v_1^2\delta^{1/2}$, and $(\delta^{1/2})^2$, respectively (and this explains the notation ``$\delta^{1/2}$'').
It follows easily from the long exact sequence
$$ \cdots \rightarrow H^0(T_*) \xrightarrow{\cdot 2} H^0(T_*) \rightarrow H^0(T_*/2) \xrightarrow{\partial} H^1(T_*) \xrightarrow{\cdot 2} \cdots $$
that 
\begin{align*}
\eta & = \partial(v_1), \\
\nu & = \partial(\delta^{1/2}), \\
\gamma & = \partial(v_1^3 + \delta^{1/2} v_1). 
\end{align*}
We deduce
\begin{align*}
b_4 \gamma & = \partial((v_1^2\delta^{1/2})(v_1^3 + \delta^{1/2}v_1)) \\
& = \partial ((v_1^2 \delta^{1/2}+\delta)v_1^2\cdot v_1) \\
& = (b_4+\delta)b_2\eta 
\end{align*}
and
\begin{align*}
b_2 \gamma & = \partial(v_1^2(v_1^3+v_1\delta^{1/2})) \\
& = \partial((v_1^4+v_1^2\delta^{1/2})v_1) \\
& = (b_2^2+b_4)\eta.
\end{align*}

To obtain the relation involving $\nu^2$, we note that from the exact sequence
$$ 
\xymatrix@C-1em@R-2em{
H^1(\ZZ[1/5](-1)\{x^3y^3\}) \ar[r]_{\cdot 2} &
H^1(\ZZ[1/5](-1)\{x^3y^3\}) \ar[r] & H^1(\FF_2\{x^3y^3\})
\\
\FF_2\{\delta\nu\} \ar@{=}[u] &
\FF_2\{\delta\nu\} \ar@{=}[u] &
}
$$
that the mod $2$ reduction of $\delta \nu$ is non-trivial in $H^1(T_*/2)$.  From this it follows that $\delta^{1/2} \nu$ is non-trivial in $H^1(T_*/2)$, and in particular, it must generate
$$ H^1(\FF_2\{ x^2 y^2 \}) \cong \FF_2. $$
It then follows from the long exact sequence
$$ 
\xymatrix@C-1em@R-2em{
H^1(\ZZ[1/5]\{x^2y^2\}) \ar[r] & H^1(\FF_2\{x^2y^2\}) \ar[r]_-{\partial} & H^2(\ZZ[1/5]\{x^2y^2\}) \ar[r]_{\cdot 2} & H^2(\ZZ[1/5]\{ x^2 y^2\})  
\\
0 \ar@{=}[u] & \FF_2\{ \delta^{1/2} \nu \} \ar@{=}[u] & \ZZ/4\{\delta\beta\} \ar@{=}[u] & 
\ZZ/4\{\delta\beta\} \ar@{=}[u]
}
$$
that
\begin{align*}
2\delta\beta & = \partial(\delta^{1/2} \nu) \\
& = \nu^2.
\end{align*}
A similar argument handles the relation involving $\eta^2$.  We note that from the exact sequence
$$ 
\xymatrix@C-1em@R-2em{
H^1(\psi\{b_2 x\}) \ar[r]_{\cdot 2} & H^1(\psi\{b_2 x\}) \ar[r] & H^1(\psi/2\{b_2 x\})
\\
\FF_2\{b_2 \eta\} \ar@{=}[u] & \FF_2\{b_2 \eta\} \ar@{=}[u] &
}
$$
that the mod $2$ reduction of $b_2 \eta$ is non-trivial in $H^1(T_*/2)$.  From this it follows that $v_1 \eta$ is non-trivial in $H^1(T_*/2)$, and in particular, it must generate
$$ H^1(\tau/2\{ x^2 + xy\}) \cong \FF_2. $$
It then follows from the long exact sequence
$$ 
\xymatrix@C-1em@R-2em{
H^1(\tau\{x^2+xy\}) \ar[r] & H^1(\tau/2\{x^2+xy\}) \ar[r]_-{\partial} & H^2(\tau\{x^2+xy\}) \ar[r]_{\cdot 2} & H^2(\tau\{ x^2+xy\})  
\\
0 \ar@{=}[u] & \FF_2\{ v_1 \eta \} \ar@{=}[u] & \FF_2\{\beta b_2\} \ar@{=}[u]
}
$$
that
\begin{align*}
\beta b_2 & = \partial(v_1 \eta) \\
& = \eta^2.
\end{align*}
The relation involving $\gamma^2$ now follows from the fact that multiplication by $b_2^2$ gives an injection
$$ \cdot b_2^2: H^2(T_{12}) \hookrightarrow H^2(T_{20}) $$
and we have
\begin{align*}
b_2^2 \gamma^2 & = \eta^2(b_2^2+b_4)^2 \\
& = b_2^2\eta^2(b_2^2 + \delta).
\end{align*}

The only relation left is the one involving $\eta\gamma$.
To this end we have the following $1$-cochain representatives, whose values on $\sigma^i \in \FF_5^\times$ are displayed below.
\begin{center}
\begin{tabular}{c|cccc}
$g$ & 1 & $\sigma$ & $\sigma^2$ & $\sigma^3$ \\
\hline
$\eta(g)$ & 0 & $x$ & $x+y$ & $y$ \\ 
$\gamma(g)$ & 0 & $x^3$ & $x^3+y^3$ & $y^3$
\end{tabular}
\end{center}

Each of these $1$-cochains $\phi(g)$ satisfies the $1$-cocycle condition
$$ (\delta \phi)(g_1, g_2)  = g_1\phi(g_2) - \phi(g_1g_2) + \phi(g_2) = 0. $$
We also record a $2$-cocycle $\beta(g_1,g_2)$ which represents $\beta$; its values on $(g_1, g_2)$ are recorded in the following table.
\begin{center}
\begin{tabular}{c|cccc}
${}_{g_2} \backslash {}^{g_1}$ & 1 & $\sigma$ & $\sigma^2$ & $\sigma^3$ \\
\hline
$1$ & 0 & 0 & 0 & 0 \\
$\sigma$ & 0 & 0 & 0 & 1 \\
$\sigma^2$ & 0 & 0 & 1 & 1 \\
$\sigma^3$ & 0 & 1 & 1 & 1 \\
\end{tabular}
\end{center}
Recall for $1$-cocycles $\phi(g)$ and $\psi(g)$, the explicit chain-level formula for the $2$-cocycle $\phi \cup \psi$ (see for instance, \cite{AdemMilgram}):
$$ (\phi \cup \psi)(g_1, g_2) = (g_1 \phi(g_2))\psi(g_1). $$
Using our explicit cochain representatives, we compute that  $\eta\gamma + \beta(b_4 + b_2^2- 2\delta)$ is represented by the $2$-cocycle $\psi(g_1, g_2)$ whose values are given by the following table.
\begin{center}
\begin{tabular}{c|cccc}
${}_{g_2} \backslash {}^{g_1}$ & 1 & $\sigma$ & $\sigma^2$ & $\sigma^3$ \\
\hline
$1$ & 0 & 0 & 0 & 0 \\
$\sigma$ & 0 & $x^3y$ & $-x^4 - xy^3$ & $x^4+x^3y-xy^3$ \\
$\sigma^2$ & 0 & $-x^4+x^3y$ & $-2xy^3$ & $x^4+x^3y$ \\
$\sigma^3$ & 0 & $x^3y - xy^3 + y^4$ & $x^4 - xy^3$ & $x^4 + x^3y+y^4$ \\
\end{tabular}
\end{center}
This $2$-cocycle is seen to be the coboundary of the following $1$-cochain $\phi$:
\begin{center}
\begin{tabular}{c|cccc}
$g$ & 1 & $\sigma$ & $\sigma^2$ & $\sigma^3$ \\
\hline
$\phi(g)$ & 0 & $-xy^3$ & $-xy^3$ & $x^4-xy^3$ \\ 
\end{tabular}
\end{center}
\end{proof}

We can now deduce Theorem~\ref{thm:HFPE2} from the preceding proposition by observing that
$$ H^*(\pi_*\TMF_1(5)) = H^*(T_*)[\Delta^{-1}]. $$
Since inverting $\Delta$ inverts $\delta$, we can replace the generator $\beta$ with the generator
$$ \xi := \beta\delta. $$
The relations of Theorem~\ref{thm:HFPE2} are then easily seen to be equivalent to those of the preceding proposition after inverting $\Delta$.
The authors find it easier to work with the generator $\xi$ in the homotopy fixed point spectral sequence computations which follow, as it, as well as the other generators in the presentation of Theorem~\ref{thm:HFPE2}, all lie in the first quadrant of the homotopy fixed point spectral sequence (with traditional Adams-style indexing).

As $\Delta$ is the product $\delta^2(b_4-11\delta)$, inverting $\Delta$ in Theorem~\ref{thm:HFPE2} is a rather opaque procedure.  Clearly it means that $\delta$ and $b_4-11\delta$ must be inverted.  Inverting $\delta$ is relatively straightforward: the entire cohomology is then $\delta$-periodic, and everything in $H^0$, as well as $\eta$ multiples on these classes, is a polynomial algebra\footnote{By this, we mean that in each bidegree, the resulting localized $E_2$-term takes the form $A \otimes \ZZ[\tilde{j}]$.} 
over
$$ \tilde{j} := b_4/\delta \in H^0(\pi_*\TMF_1(5)). $$
This class seems to act like a kind of $j$-invariant in the theory of modular forms for $\Gamma_0(5)$.  The relationship to the classical $j$-invariant is given by the following equation
$$ j = \frac{c_4^3}{\Delta} = \frac{(\tilde{j}^2 - 12\tilde{j} + 16)^3}{\tilde{j}-11}. $$
(We are grateful to the referee for suggesting the importance of this element.)  

However, inverting $b_4-11\delta$ (or equivalently $\tilde{j} -11$) is far more subtle, as there are many relations involving $b_4$ and hence $\tilde{j}$.  We propose two perspectives to help analyze the resulting localized cohomology groups.
\begin{description}
	\item[Perspective 1] Work $2$-locally.  The only torsion in $\TMF_0(5)$ is $2$-torsion, and arguably this spectrum is most interesting from the perspective of $2$-local homotopy theory.  We will argue that in this context, the effect of inverting $(\tilde{j}-11)$ can be analyzed with a simple set of relations.
	
	\item[Perspective 2] (We thank the referee for pointing out this alternative perspective.) Instead of focusing on $b_2$, make $b_4$ (or equivalently $\tilde{j} = b_4/\delta$) the more fundamental variable to express things in.  This perspective has the advantage of making $H^0$ a free module over the ring
	$$ \ZZ[1/5, \tilde{j}, (11 - \tilde{j})^{-1},\delta^{\pm 1}] $$
	at the expense of being able to easily identify $b_2$-periodic (i.e. $2$-primary $v_1$-periodic) classes.
\end{description}
Perspective~1 is arguably the better perspective to take if the reader is interested in $2$-local homotopy theory.  Perspective~2 is arguably more appropriate for those readers interested in $\TMF_0(5)$ from a global perspective (i.e. with only $5$ inverted).

\subsubsection*{Perspective 1: 2-local approach}

We offer the following simple corollary to Theorem~\ref{thm:HFPE2}, which is easily deduced from the relations therein.

\begin{cor}\label{cor:jrels}
In $H^*(\pi_*\TMF_1(5))$, we have
\begin{align*}
11(\tilde{j} -11)^{-1} b_4 & = b_2^2(\tilde{j}-11)^{-1} - 4\delta(\tilde{j}-11)^{-1}-b_4,
\\
(\tilde{j}-11)^{-1}\nu & = \nu,
\\
(\tilde{j}-11)^{-1}\gamma &= \gamma + (\tilde{j}-11)^{-1}(b_4b_2\delta^{-1}+b_2)\eta,
\\
(\tilde{j}-11)^{-1}\beta &= (\tilde{j}-11)^{-1}(\eta\gamma\delta^{-1}-b_2\eta^2\delta^{-1}) - \beta.
\end{align*}
\end{cor}

The appearance of the factor of $11$ in the first relation of the previous corollary complicates the situation, but this complication disappears after we invert $11$.  In particular we deduce from the above corollary that at least additively, $H^*(\pi_*\TMF_1(5)_{(2)})$ can be visualized from Figure~\ref{fig:HFPSSE2} by first inverting $\delta$, and then formally adjoining a polynomial algebra on $(\tilde{j}-11)^{-1}$ on all classes of the form
$$ \delta^ib_2^j\eta^k, \quad i \in \ZZ, j\ge 0, k \ge 0.$$

\begin{rmk}
If we complete at $(2,b_2)$ (as in the case of the $E_2$-term of the homotopy fixed point spectral sequence for $\TMF_0(5)_{K(2)}$), then the situation becomes more simple: the class $(\tilde{j}-11)^{-1}$ is already invertible in $H^*((T_*)^{\wedge}_{(2,b_2)})[\delta^{-1}]$.
\end{rmk}

\subsubsection*{Perspective 2: global approach}

The referee, in addition to suggesting the previous far more streamlined and readable approach to Theorem~\ref{thm:HFPE2}, found an alternative set of generators for $H^*(\pi_*\TMF_1(5))$ which gives a cleaner presentation if the reader does not wish to work $2$-locally.  Replace the generator $\gamma$ with the generator
$$ \tilde{\gamma} := \gamma + b_2 \eta. $$
We then have the following presentation of $H^*(\pi_*\TMF_1(5))$:
$$ H^0(\pi_*\TMF_1(5)) = \ZZ[1/5, \tilde{j}, (11 - \tilde{j})^{-1}, b_2, \delta^{\pm 1}]/(b_2^2 = (\tilde{j}^2+4)\delta) $$
and
$$ H^*(\pi_*\TMF_1(5)) = H^0(\pi_*\TMF_1(5))[\beta, \eta, \nu, \tilde{\gamma}]/\sim $$
where $\sim$ consists of relations
\begin{center}
\begin{tabular}{cc}
\begin{minipage}{0.4\linewidth}
\[\begin{aligned}
4\beta &= 0,\\
  2\tilde{j}\beta &= 0,\\
  2b_2\beta &= 0,\\
  2\eta &= 0,\\
  2\tilde{\gamma} &= 0,\\
  2\nu & = 0, \\
  b_2\tilde{\gamma} &= \delta\tilde{j}\eta,\\
  b_2 \eta &= \tilde{j}\tilde{\gamma},
\end{aligned}\]
\end{minipage}
&
\begin{minipage}{0.4\linewidth}
\[\begin{aligned}
  \tilde{j}\nu &= 0,\\
  b_2\nu &= 0,\\
  \eta^2 &= b_2\beta,\\
  \tilde{\gamma}^2 &= \delta b_2 \beta,\\
  \eta\tilde{\gamma} & = (\tilde{j}+2)\delta\beta,\\
  \nu^2 &= 2\delta\beta,\\
  \eta \nu &= 0, \\
 \nu\tilde{\gamma} & = 0. 
\end{aligned}\]
\end{minipage}
\end{tabular}
\end{center}

\subsection{The behavior of transfer and restriction in the homotopy fixed point spectral sequence}

Our next task is to compute the differentials in the homotopy fixed point spectral sequence
\begin{equation}\label{eq:HFPSS}
H^s(\FF_5^\times ; \pi_t \TMF_1(5)) \Rightarrow \pi_{t-s} \TMF_0(5).
\end{equation}
One might expect this could be accomplished by comparison with the well known descent spectral sequence for $\TMF$.  However, it will turn out that the images of many elements of $\pi_*\TMF$ in $\pi_*\TMF_0(5)$ will be detected on different lines of the respective spectral sequences.  An analysis of transfer and restriction maps relating these two spectral sequences will remedy this complication.

Let $\mM(5)$ denote the moduli space of elliptic curves with full level structure, and $\TMF(5)$ the corresponding spectrum of topological modular forms.  Utilizing the following
portion of \cite[Diagram 7.4.3]{KM}:
\begin{center}
\begin{tikzpicture}[descr/.style={fill=white,inner sep=.5pt}]
  \matrix(m) [matrix of math nodes, row sep=3em,
  column sep=3em]
  { \mM(5) \\
    \mM_0(5) \\
    \mM \\ };
  \path[-,font=\small]
  (m-1-1) edge node[auto] {$B$} (m-2-1)
  (m-2-1) edge (m-3-1)
  (m-1-1) edge [bend right=80] node[left] {$GL_2(\FF_5)$} (m-3-1);
\end{tikzpicture}
\end{center}
(where $B$ is the Borel subgroup of upper triangular matrices),  
the spectrum $\TMF(5)$ has an action of $GL_2(\FF_5)$, and we have
\begin{align*}
\TMF[1/5] & \simeq \TMF(5)^{hGL_2(\FF_5)}, \\
\TMF_0(5) & \simeq \TMF(5)^{hB}.
\end{align*}
We finally note that the moduli space $\mM(5)$ is representable by an affine scheme (see, for example, \cite{KM}).  It follows (see for example, \cite[Ch.~5]{TMF}) that the descent spectral sequences for $\TMF$ and $\TMF_0(5)$
\begin{gather*}
H^s(\mM; \omega^{\otimes t})[1/5] \Rightarrow \pi_{2t-s} \TMF[1/5] \\
H^s(\mM_0(5); \omega^{\otimes t}) \Rightarrow \pi_{2t-s} \TMF_0(5)
\end{gather*}
are isomorphic to the Cech descent spectral sequences associated to the \'etale affine covers 
\begin{gather*}
\mM(5) \rightarrow \mM \\
\mM(5) \rightarrow \mM_0(5),
\end{gather*}
respectively.
However, as these \'etale affine covers are in fact Galois, with Galois group $GL_2(\FF_5)$ and $B$, respectively, the Cech descent spectral sequences are precisely the homotopy fixed point spectral sequences:
\begin{gather*}
H^s(GL_2(\FF_5); \pi_{2t} \TMF(5)) \Rightarrow \pi_{2t-s} \TMF[1/5] \\
H^s(B; \pi_{2t} \TMF(5)) \Rightarrow \pi_{2t-s} \TMF_0(5).
\end{gather*}
Note that the we do not need to know anything about $\pi_*\TMF(5)$ to understand these spectral sequences; the $E_2$-terms are isomorphic to $H^*(\mM, \omega^{\otimes *})[1/5]$ and 
$H^*(\mM_0(5), \omega^{\otimes *})$, respectively.  

The descent spectral sequence for $\TMF$ is computed in many places.  For example, Bauer, in \cite{Tilman}, and the Hopkins-Mahowald article, in Part II of \cite{TMF}, compute the Adams-Novikov spectral sequence for $\tmf$.  It is explained in \cite{Konter} that the descent spectral sequence for $\TMF$ may be obtained from the Adams Novikov spectral sequence for $\tmf$ by inverting $\Delta$.  Alternatively, it is also explained in \cite{Konter} that the descent spectral sequence for $\TMF$ can be obtained from the descent spectral sequence for $\Tmf$ by inverting $\Delta$, and the descent spectral sequence for $\Tmf$ is described in \cite{Konter} and in \cite[Ch.~13]{TMF}.

The homotopy fixed point spectral sequence 
$$
H^s(B; \pi_{2t} \TMF(5)) \Rightarrow \pi_{2t-s} \TMF_0(5)
$$
is also isomorphic to the homotopy fixed point spectral sequence
$$
H^s(\FF_5^\times; \pi_{2t} \TMF_1(5)) \Rightarrow \pi_{2t-s} \TMF_0(5).
$$
Indeed, the latter is also a Cech descent spectral sequence, but for the affine \'etale Galois cover
$$ \mM_1(5) \rightarrow \mM_0(5). $$

\begin{lemma}\label{lem:trres}
The transfer-restriction composition
\[
  \pi_*\TMF[1/5] \xrightarrow{\mathrm{Res}} \pi_*\TMF_0(5)
  \xrightarrow{\mathrm{Tr}} \pi_*\TMF[1/5]
\]
is multiplication by $[GL_2(\FF_5):B] = 6$.  
\end{lemma}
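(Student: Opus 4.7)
The plan is to recognize the composite in the lemma as an instance of the standard transfer-restriction formula for finite-index subgroups of a finite group acting on a spectrum. The diagram preceding the lemma provides the key structural input: $\TMF \simeq \TMF(5)^{hGL_2(\FF_5)}$ and $\TMF_0(5) \simeq \TMF(5)^{hB}$, and the restriction and transfer maps in the lemma are the canonical ones associated to the subgroup inclusion $B \hookrightarrow GL_2(\FF_5)$.

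The essential ingredient I would invoke is the following classical fact: for a finite group $G$, a finite-index subgroup $H \leq G$, and a $G$-spectrum $X$, the composite
\[
  X^{hG} \xrightarrow{\mathrm{Res}} X^{hH} \xrightarrow{\mathrm{Tr}} X^{hG}
\]
is homotopic to multiplication by the index $[G:H]$. This can be seen directly at the level of the homotopy fixed point spectral sequence, where it recovers the classical transfer-restriction formula in group cohomology, and it lifts to the spectrum level since the transfer is constructed from the stable equivariant Becker–Gottlieb transfer applied to the finite $G$-set $G/H$.

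Applying this with $G = GL_2(\FF_5)$, $H = B$, and $X = \TMF(5)$, and then taking homotopy groups, immediately gives that the composite in question is multiplication by $[GL_2(\FF_5):B]$. It remains to verify that this index is $6$: we have $|GL_2(\FF_5)| = (5^2-1)(5^2-5) = 480$ and $|B| = (5-1)^2 \cdot 5 = 80$, so $[GL_2(\FF_5):B] = 6$. Equivalently, $GL_2(\FF_5)/B \cong \mathbb{P}^1(\FF_5)$ has $6$ points.

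There is no real obstacle here; the only point requiring care is matching the transfer in the statement with the one arising from the finite cover $\mathcal{M}_0(5) \to \mathcal{M}$ (of degree $6$) and its topological realization via $\Otop$, so that the abstract transfer-restriction identity specializes to the claimed composite. Once the identifications $\TMF \simeq \TMF(5)^{hGL_2(\FF_5)}$ and $\TMF_0(5) \simeq \TMF(5)^{hB}$ are in hand, this bookkeeping is formal.
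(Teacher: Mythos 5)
Your starting point is the same as the paper's: the transfer--restriction composite on the $E_2$-term
\[
H^s(GL_2(\FF_5); \pi_t \TMF(5)) \xrightarrow{\mathrm{Res}} H^s(B; \pi_t\TMF(5)) \xrightarrow{\mathrm{Tr}} H^s(GL_2(\FF_5); \pi_t \TMF(5))
\]
is multiplication by $[GL_2(\FF_5):B]=6$, by the classical group-cohomology formula. The gap is in the assertion that this ``lifts to the spectrum level,'' i.e.\ that for any $G$-spectrum $X$ and $H \le G$ the composite $X^{hG} \to X^{hH} \to X^{hG}$ is homotopic to $[G:H]$ times the identity. That statement is false in general. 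By the projection formula, the composite is multiplication by the element $\mathrm{Tr}(1_{X^{hH}}) \in \pi_0 X^{hG}$, and the Becker--Gottlieb/equivariant construction identifies this element with the image of the Euler class $[G/H] \in A(G)$ rather than with the integer $[G:H]$. These agree only modulo positive filtration in the homotopy fixed point spectral sequence. A concrete counterexample: for $X = S$ with trivial $C_2$-action and $H = e$, one has $\pi_0 S^{hC_2} \cong A(C_2)^{\wedge}_I$, in which $[C_2/e] \ne 2\cdot 1$, so $\mathrm{Tr}\circ\mathrm{Res}$ is not multiplication by $2$.

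What is missing, and what the paper supplies, is the step that rules out a positive-filtration correction to $\mathrm{Tr}\,\mathrm{Res}(1_{\TMF})$ in $\pi_0\TMF$: since there are no nontrivial classes in $E_\infty^{s,t}$ with $t-s=0$ and $s>0$ in the homotopy fixed point spectral sequence for $\TMF \simeq \TMF(5)^{hGL_2(\FF_5)}$, the $E_2$-level computation forces $\mathrm{Tr}\,\mathrm{Res}(1_{\TMF}) = 6\cdot 1_{\TMF}$ exactly. The projection formula
\[
\mathrm{Tr}\,\mathrm{Res}(a) = \mathrm{Tr}\bigl((\mathrm{Res}\, a)\cdot 1_{\TMF_0(5)}\bigr) = a\cdot \mathrm{Tr}(1_{\TMF_0(5)}) = 6a
\]
then extends the identity from the unit to all of $\pi_*\TMF$. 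Your argument would be complete if you replaced the appeal to the general spectrum-level ``classical fact'' with this two-step reduction (vanishing of positive filtration in degree $0$, then the projection formula).
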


\begin{proof}
The theorem is true on the level of homotopy fixed point spectral sequence $E_2$-terms: the composite
$$ H^s(GL_2(\FF_5); \pi_t \TMF(5)) \xrightarrow{\mathrm{Res}} H^s(B; \pi_t\TMF(5)) \xrightarrow{\mathrm{Tr}} H^s(GL_2(\FF_5); \pi_t \TMF(5)) $$
is multiplication by $[GL_2(\FF_5):B] = 6$.  Since there are no
nontrivial elements of $E^{s,t}_\infty$ with $t-s = 0$ and $s > 0$ (see, for example, \cite{Tilman}),  it follows that the transfer-restriction on the unit $1_{\TMF} \in \pi_0 \TMF[1/5]$ is given by
$$ \mathrm{Tr} \, \mathrm{Res} (1_{\TMF}) = 6 \cdot 1_{\TMF}. $$
We compute, using the projection formula, that for $a \in \pi_*\TMF[1/5]$, we have
$$ \mathrm{Tr} \, \mathrm{Res} ( a ) =  \mathrm{Tr} \, \mathrm{Res} (a \cdot 1_{\TMF}) = \mathrm{Tr} ((\mathrm{Res} \,  a) \cdot 1_{\TMF_0(5)}) = a \cdot \mathrm{Tr}(1_{\TMF_0(5)}) = 6\cdot a.$$
\end{proof}

We deduce the following corollary.

\begin{cor}\label{cor:trres}
Suppose that $ z\in \pi_*\TMF$ satisfies $2z\ne 0$, then $\mathrm{Res}(z)$ in
$\pi_*\TMF_0(5)$ is nonzero.   Morover, if  $z$ has Adams-Novikov filtration $s_1$, and $2z$ has Adams filtration $s_2$, then the Adams-Novikov filtration $s$ of $\mathrm{Res}(z)$ satisfies $s_1 \le s \le s_2$.
\end{cor}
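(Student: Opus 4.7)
The plan is to extract both claims from Lemma \ref{lem:trres}, exploiting that all spectra in this paper are implicitly $2$-local so that $3$ acts invertibly. (I read the ``Adams filtration $s_2$'' in the statement as a typo for ``Adams--Novikov filtration''; the standard inequality Adams--Novikov $\leq$ Adams would make the following argument apply in the literal reading as well.)

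For nonvanishing, suppose $\mathrm{Res}(z) = 0$. Then Lemma \ref{lem:trres} gives $6z = \mathrm{Tr}(\mathrm{Res}(z)) = 0$. Because $3$ is a unit $2$-locally, this forces $2z = 0$, contradicting the hypothesis. Hence $\mathrm{Res}(z) \neq 0$.

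For the filtration bounds, both $\mathrm{Res}:\TMF \to \TMF_0(5)$ and $\mathrm{Tr}:\TMF_0(5) \to \TMF$ are spectrum-level maps compatible with the presentations $\TMF \simeq \TMF(5)^{hGL_2(\FF_5)}$ and $\TMF_0(5) \simeq \TMF(5)^{hB}$. They induce maps of the associated HFPSS's---which agree with the Adams--Novikov spectral sequences of $\TMF$ and $\TMF_0(5)$---preserving cohomological degree, and so can only raise Adams--Novikov filtration. Applied via the restriction this yields $s \geq s_1$. Applied via the transfer, it yields that $6z = \mathrm{Tr}(\mathrm{Res}(z))$ has Adams--Novikov filtration $\geq s$. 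Since multiplication by the unit $3$ is a self-equivalence of $\TMF$ it preserves Adams--Novikov filtration, so $6z = 3\cdot(2z)$ has the same Adams--Novikov filtration as $2z$, namely $s_2$. Combining gives $s \leq s_2$.

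The argument is essentially a direct corollary of Lemma \ref{lem:trres}; the only thing worth verifying is that the spectrum-level transfer preserves cohomological degree in the HFPSS. This is standard, since the HFPSS arises from the skeletal filtration of $BG$ and the transfer is modeled by a filtration-preserving spectrum map (agreeing on $E_2$ with the classical group-cohomological transfer).
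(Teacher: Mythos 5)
Your proposal is correct and is exactly the deduction the paper intends: the paper offers no separate argument for this corollary beyond ``We deduce the following corollary'' from Lemma~\ref{lem:trres}, and your writeup (that $\mathrm{Tr}\circ\mathrm{Res}=6=2\cdot(\text{unit})$ $2$-locally forces nonvanishing, while $\mathrm{Res}$ and $\mathrm{Tr}$ induce filtration-nondecreasing maps of the homotopy fixed point spectral sequences over $\TMF(5)$) supplies precisely the missing details. Your handling of the ``Adams filtration'' wording via the inequality (Adams--Novikov filtration $\le$ Adams filtration) is also sound.
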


Finally, in order to properly utilize the previous corollary, we record the behavior of the restriction.

\begin{lemma}\label{lem:restrictions}
The restriction
$$ H^s(GL_2(\FF_5); \pi_t \TMF(5)) \xrightarrow{\mathrm{Res}} H^s(B; \pi_t\TMF(5)) $$
has the following behavior on selected elements (with the notation of \cite{Tilman} being used for $H^s(GL_2(\FF_5); \pi_{2t} \TMF(5)) \cong H^s(\mM, \omega^{\otimes t})[1/5]$):
\begin{align*}
h_1 & \mapsto \eta, \\
h_2 & \mapsto \nu, \\
g & \mapsto \delta \xi^2 \mod {(2,b_2, \gamma\eta)}, \\ 
c_4 & \mapsto b_2^2-12b_4+12\delta, \\
c_6 & \mapsto -b_2^3+18b_2b_4-72b_2\delta, \\
\Delta & \mapsto \delta^2(b_4 - 11\delta).
\end{align*}
\end{lemma}

\begin{proof}
Consider the element $a_1$ of the elliptic curve Hopf algebroid.  It is primitive modulo $(2)$, and hence gives an element
$$ a_1 \in H^0(\mM_{\FF_2}, \omega) \cong H^0(GL_2(\FF_5); \pi_2 \TMF(5)/2). $$
However, in $\pi_2\TMF_1(5)$, we have $a_1= y - x$, and this gives rise in the proof of Proposition~\ref{prop:HFPE2} to an element
$$ v_1 \in H^0(\FF_5^\times; \pi_2\TMF_1(5)/2) \cong H^0(B; \pi_2 \TMF(5)/2).$$
We therefore have under the restriction map:
\begin{align*}
H^0(GL_2(\FF_5); \pi_2 \TMF(5)/2) & \rightarrow H^0(B; \pi_2 \TMF(5)/2) \\
a_1 & \mapsto v_1.
\end{align*}
Consider the diagram:
$$
\xymatrix{
H^0(GL_2(\FF_5); \pi_2 \TMF(5)/2) \ar[r]^\partial \ar[d]_{\mathrm{Res}} & 
H^1(GL_2(\FF_5); \pi_2 \TMF(5)) \ar[d]^{\mathrm{Res}}
\\
H^0(B; \pi_2 \TMF(5)/2) \ar[r]^\partial & 
H^1(B; \pi_2 \TMF(5)) 
}
$$
In the proof of Proposition~\ref{prop:HFPE2} we showed that $\partial(v_1) = \eta$, and the Bockstein spectral sequence computations of \cite{Tilman} give $\partial(a_1) = h_1$.  We deduce $\mathrm{Res}(h_1) = \eta$.

The restriction of $h_2$ must be non-trivial by Corollary~\ref{cor:trres}.  The element $\nu$ is the only nonzero element in the group $H^1(B; \pi_4 \TMF(5))$, so we must have $\mathrm{Res}(h_2) = \nu$.

The restriction of $g$ is computed by computing the restriction modulo $(2,a_1)$ (where $a_1 \in \pi_2 \TMF(5)$ is the image of $a_1 = y - x \in \pi_2 \TMF_1(5)$):
$$ \overline{\mathrm{Res}}: H^*(GL_2(\FF_5); \pi_* \TMF(5)/(2,a_1)) \rightarrow H^*(B; \pi_* \TMF(5)/(2,a_1)). $$
Since the mod 2 supersingular locus of $\mM_1(5)$ is given by 
$$ \mM_1(5)^{ss}_{\FF_2} = \Spec(\pi_0 (\TMF_1(5)/(2,a_1)), $$
the mod 2 supersingular locus of $\mM(5)$ is given by
$$ \mM(5)^{ss}_{\FF_2} = \Spec(\pi_0 (\TMF(5)/(2,a_1)). $$
As such, there are isomorphisms
\begin{align*}
H^s(GL_2(\FF_5); \pi_{2t} \TMF(5)/(2,a_1)) & \cong H^s(\mM^{ss}_{\FF_2},\omega^{\otimes t}) \cong H^s(G_{24}; \pi_{2t} E_2/(2,a_1))^{Gal}, \\
H^s(B; \pi_{2t} \TMF(5)/(2,a_1)) & \cong H^s(\mM_0(5)^{ss}_{\FF_2},\omega^{\otimes t})\cong H^s(C_4; \pi_{2t} E_2/(2,a_1))^{Gal}
\end{align*}
(where $G_{24}$ is the automorphism group of the unique supersingular curve over $\FF_4$ and $Gal = Gal(\FF_4/\FF_2)$). Under these isomorphisms the mod $(2,a_1)$ restriction map above is equivalent to the restriction map
$$ \overline{\mathrm{Res}} : H^*(G_{24}; \pi_*E_2/(2, u_1))^{Gal} \rightarrow H^*(C_4; \pi_* E_2/(2,u_1))^{Gal}. $$ 
Note that $\pi_{24} E_2/(2,u_1)$ is $\FF_4$, with trivial action by $G_{24}$.
We therefore have
\begin{align*}
H^{4}(G_{24}; \pi_{24}E_2/(2,u_1))^{Gal} & \cong H^4(Q_8; \FF_2) \\
& \cong \FF_2\{ g \}
\end{align*}
where $g$ is the image of the element 
$$ g \in H^4(\mM;\omega^{12}) \cong H^{4}(GL_2(\FF_5); \pi_{24}\TMF(5)) $$ 
of \cite{Tilman} 
under the reduction map
\begin{align*}
H^{4}(GL_2(\FF_5); \pi_{24}\TMF(5)) & \rightarrow H^{4}(GL_2(\FF_5); \pi_{24}\TMF(5)/(2,a_1)) \\
& \cong H^4(G_{24}; \pi_{24} E_2/(2,u_1))^{Gal}. 
\end{align*}
(This follows from the construction of $g$ in \cite{Tilman} using Bockstein spectral sequences.)
We also have
\begin{align*}
H^{4}(C_4; \pi_{24}E_2/(2,u_1))^{Gal} & \cong H^4(C_4; \FF_2) \\
& \cong \FF_2\{ \beta^2 \}
\end{align*}
and the restriction gives an isomorphism
$$ \overline{\mathrm{res}}: H^4(Q_8; \FF_2) \xrightarrow{\cong} H^4(C_4; \FF_2). $$
Now, consider the map
$$ \mathrm{red}: H^*(B; \pi_* \TMF(5) )/(2,b_2) \rightarrow H^*(B;\pi_*\TMF(5)/(2,a_1)). $$
Since $a_1 \equiv x+y$, and $b_4 \equiv xy(x^2+y^2)$, it follows that $\mathrm{red}(b_4) = 0$.  We therefore have (using Proposition~\ref{prop:HFPE2})
\begin{align*}
\mathrm{red}(\gamma\eta) 
& = \mathrm{red}(b_4 \gamma) \\
& = \mathrm{red}((b_4+b_2^2+2\delta)\beta) \\
& = 0.
\end{align*}
Therefore the map $\mathrm{red}$ descends to a map
$$ \overline{\mathrm{red}}: H^*(B; \pi_* \TMF(5))/(2, b_2, \gamma\eta) \rightarrow H^*(B; \pi_* \TMF(5)/(2,a_1)). $$
Now
$$ H^{4}(B; \pi_{24}\TMF(5))/(2, b_2, \gamma\eta) = \FF_2\{ \xi^2\delta \} $$
and $\overline{\mathrm{red}}(\xi^2 \delta)$ is the generator of $H^4(C_4;\FF_2)$.  We therefore have
\begin{align*}
\overline{\mathrm{red}} \mathrm{Res} (g) 
& =  \overline{\mathrm{Res}} (g) \\
& = \overline{\mathrm{red}}(\delta \xi^2), 
\end{align*}
and the result concerning the restriction of $g$ follows.

The restrictions of $c_4$, $c_6$, and $\Delta$ may be computed from the map of Hopf algebroids induced by the map $f$, computed in Theorem~\ref{thm:fqM1} (see Section~\ref{subsec:Q5}). 
\end{proof}

\begin{cor}
The elements $\eta$ and $\nu$ are permanent cycles in the homotopy fixed point spectral sequence for $\pi_* \TMF_0(5)$.
\end{cor}

\subsection{Computation of the differentials and hidden extensions}

The following sequence of propositions specifies the behavior of the
homotopy fixed point spectral sequence (\ref{eq:HFPSS}) 
culminating in Theorem \ref{thm:TMF05}, a complete description of
$\pi_*\TMF_0(5)$.

\begin{prop}\label{prop:d3}
In the homotopy fixed point spectral sequence (\ref{eq:HFPSS}),
$E_2 = E_3$ and the $d_3$-differentials are determined by
\begin{figure}
\includegraphics[width=\textwidth]{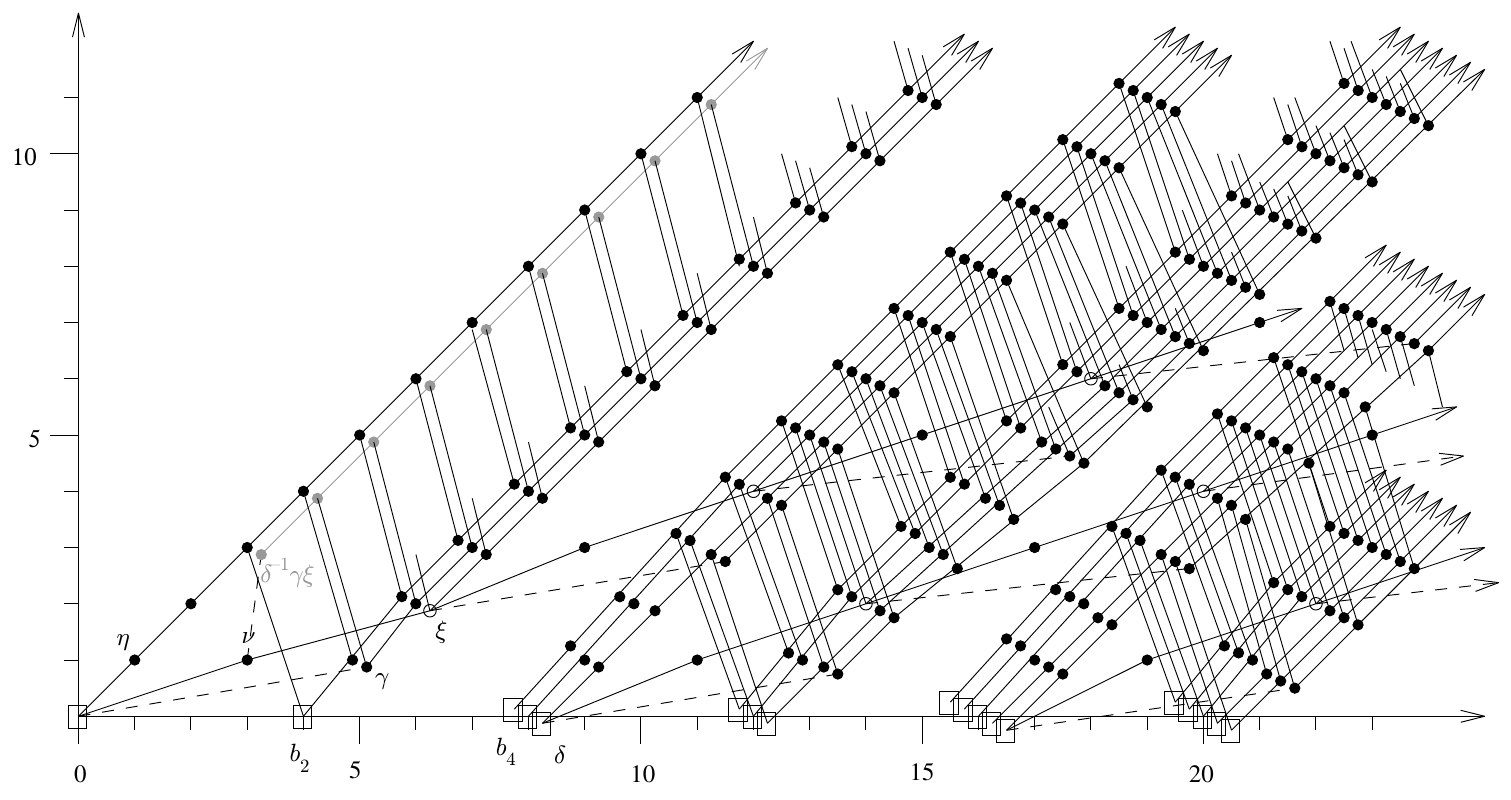}
\caption{The $d_3$-differentials in the homotopy fixed point spectral sequence for $\TMF_0(5)$.}\label{fig:HFPSSE2diffs}
\end{figure}
\[\begin{aligned}
  d_3 b_2 &= \eta^3,\\
  d_3\xi &= \delta^{-1}\eta\xi^2,\\
  d_3\gamma &= \delta^{-1}\eta\gamma\xi.
\end{aligned}\]
and $d_3(b_4) = d_3(\delta) = 0$.
\end{prop}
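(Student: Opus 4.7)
\emph{Step 1: $E_2 = E_3$.} The enumeration of the graded pieces $\pi_{8n}, \pi_{8n+4}, \pi_{4n+2}\TMF_1(5)$ in the proof of Proposition~\ref{prop:HFPE2} makes clear that $\pi_*\TMF_1(5)$ is concentrated in even topological degrees. Any $d_{2r}$-differential shifts total degree by $-1$, so its targets lie in cohomology of odd-degree modules, which vanish. Hence $d_2 = 0$ and $E_2 = E_3$.

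\emph{Step 2: the anchor differential $d_3 b_2 = \eta^3$.} I will establish this by invoking the $\TMF$-module structure of $\TMF_0(5)$. The restriction map $\pi_*\TMF \to \pi_*\TMF_0(5)$ induced by $f:\mM_0(5)\to\mM$ is a ring map of homotopy fixed point spectral sequences sending $\eta$ to $\eta$, so any $\eta$-torsion $d_3$ in the $\TMF$-descent spectral sequence at height two pulls back. The most efficient route is to pass to $K(2)$-localizations: Corollary~\ref{cor:K2local} identifies $\TMF_1(5)_{K(2)}$ with $E_2$, and the classical Hopkins-Miller differential $d_3(u_1) = \eta^3$ for $E_2^{hC_4}$ yields the stated formula after tracing the algebraic identification between $b_2$ and the $\FF_5^\times$-averaged Lubin-Tate parameter through the passage from non-homogeneous Tate form $T(b)$ to homogeneous form $T^1(a_1,u)$. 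Corollary~\ref{cor:trres} guarantees that this restriction is nonzero as claimed.

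\emph{Step 3: $d_3\xi$ and $d_3\gamma$ by multiplicativity.} The class $\delta$ is a permanent cycle because the discriminant $\Delta = \delta^2(b_4 + 11\delta)$ is a permanent cycle and $b_4 + 11\delta$ is a unit after inverting $\Delta$; so $d_3\delta = 0$. Writing $\xi = \beta\delta$, the Leibniz rule reduces $d_3\xi$ to $\delta\cdot d_3\beta$. A direct cochain computation using the explicit $1$-cochains for $\eta,\nu,\gamma$ and the $2$-cocycle for $\beta$ recorded in the proof of Proposition~\ref{prop:HFPE2} pins down $d_3\beta$, and the ring relation $b_2\xi = \delta\eta^2$ rearranges the answer into $\delta^{-1}\eta\xi^2$. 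For $\gamma$, apply the Leibniz rule to the relation $\gamma b_2 = \eta(b_2^2 + b_4)$ using $d_3 b_2 = \eta^3$ and $d_3\eta = 0$; this gives $b_2\cdot d_3\gamma = \eta\cdot d_3 b_4$. One computes $d_3 b_4$ by differentiating $b_4^2 = b_2^2\delta - 4\delta^2$ (or by a parallel restriction argument), then divides by $b_2$, which is unambiguous because the $b_2$-action on the target group in the $E_3$-page is injective.

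\emph{Main obstacle.} The only genuine analytic input is the anchor differential $d_3 b_2 = \eta^3$; the remainder is multiplicative bookkeeping with the ring relations from Proposition~\ref{prop:HFPE2}. The most delicate point is matching the algebraic $b_2 = x^2+y^2$ to the classical Hopkins-Miller $u_1$-differential, which requires care with the $\FF_5^\times$ averaging and the rescaling between homogeneous and non-homogeneous Tate normal forms; as a backup, one can instead compare with the homotopy orbit spectral sequence, as promised in the introduction.
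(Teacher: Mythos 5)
Your Step 1 is fine, and the first sentence of your Step 2 is essentially the paper's move: the paper pushes forward the known Adams--Novikov differential $d_3(a_1^2 h_1) = h_1^4$ for $\TMF$ along the restriction $\TMF \to \TMF_0(5)$, where it becomes $d_3(b_2\eta) = \eta^4$, whence $d_3 b_2 = \eta^3$. Your subsequent detour through $K(2)$-localization and a ``classical'' $d_3(u_1)=\eta^3$ for $E_2^{hC_4}$ is both heavier and vaguer than necessary (and the appeal to Corollary~\ref{cor:trres} there is misplaced: that corollary concerns classes $z$ with $2z\neq 0$, which is irrelevant to $\eta^3$). Still, Step 2 is repairable. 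The real problems are in Step 3.

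First, your claim that $\delta$ is a permanent cycle because $\Delta$ is one is false: $\Delta$ is \emph{not} a permanent cycle in either spectral sequence ($d_5\Delta = \nu\bar{\kappa}$ in the ANSS for $\TMF$, and Propositions~\ref{prop:d5} and \ref{prop:d7} give $d_5\delta = \delta^{-1}\nu\xi^2$ and nontrivial $d_7$'s on $2\delta$ and $\delta^2$). Only $d_3\delta = 0$ is needed, and that itself requires an argument, since $E_2^{3,10}$ is nonzero. Second, ``a direct cochain computation pins down $d_3\beta$'' is not a valid method: $d_3$ in a homotopy fixed point spectral sequence is governed by the $k$-invariants of $\TMF_1(5)$, not by the algebra of cocycle representatives on the $E_2$-page; no amount of manipulation of the $1$- and $2$-cochains from Proposition~\ref{prop:HFPE2} can produce a $d_3$. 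Third, your Leibniz computation for $\gamma$ drops the term $\gamma\cdot d_3(b_2)=\gamma\eta^3$ (not obviously zero in $E_3$), and differentiating $b_4^2 = b_2^2\delta - 4\delta^2$ only yields $2b_4\, d_3(b_4) = 0$, which is vacuous since any candidate for $d_3 b_4$ is $2$-torsion; so $d_3 b_4$ is not determined this way, and the injectivity of $b_2$-multiplication you invoke is doubtful given relations like $b_2\nu = 0$ and $b_2\xi = \delta\eta^2$. The paper's actual route to the remaining two differentials is topological: Corollary~\ref{cor:trres} applied to $2\nu\in\pi_3\TMF$ (which has $4\nu\neq 0$) shows $\mathrm{Res}(2\nu)$ is nonzero and must be detected in filtration $2$ or $3$, forcing it onto $\delta^{-1}\gamma\xi$; since $2\eta\nu = 0$, the class $\delta^{-1}\eta\gamma\xi$ must die, and the only possibility is $d_3\gamma = \delta^{-1}\eta\gamma\xi$. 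Then $d_3\xi = \delta^{-1}\eta\xi^2$ follows by applying Leibniz to the permanent cycle $\delta^{-1}\gamma\xi$. You need some input of this kind; multiplicative bookkeeping alone cannot close Step 3.
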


Figure~\ref{fig:HFPSSE2diffs} shows the $d_3$ differentials in the homotopy fixed point spectral sequence for $\TMF_0(5)$.  While most terms involving $\Delta^{-1}$ (and hence $\delta^{-1}$) are excluded, those depicted are shown in gray.

\begin{proof}
There is no room for $d_2$-differentials.

Note that $d_3 a_1^2 h_1 = h_1^4$ in the Adams-Novikov spectral
sequence for $\TMF$ (we use the notation of \cite{Tilman}).  Under the
restriction map $\TMF\to \TMF_0(5)$, this differential maps to $d_3
b_2\eta = \eta^4$, from which it follows that $d_3b_2 = \eta^3$, and therefore $d_3(b_2^2) = 0$.

Note that since the possible targets of $d_3(b_4)$ and $d_3(\delta)$ are $2$-torsion, we have $d_3(\delta^2) = d_3(b_4^2) = 0$.
The element $\Delta$ is a $d_3$-cycle in the Adams-Novikov spectral sequence for $\TMF$ \cite{Tilman}.  It follows that
$$ 0 = d_3(\delta^2(b_4 - 11\delta)) = \delta^2(d_3(b_4) + d_3(\delta)) $$
and therefore 
$$ d_3(b_4) = d_3(\delta). $$
However, we have
$$ 0 = d_3(b_4^2) = d_3(b_2^2 \delta - 4\delta^2) = b_2^2 d_3(\delta). $$
Since multiplication by $b_2^2$ is injective on the possible targets of $d_3(\delta)$, we conclude 
$$ d_3(b_4) = d_3(\delta) = 0. $$

By Corollary~\ref{cor:trres}, $2\nu$ must be detected in the 
homotopy fixed point spectral sequence for $\TMF_0(5)$ in Adams-Novikov filtration between 1 and 3.  Since $2\nu = 0$ in the $E_2$-page, it follows that in fact the filtration has to be between 2 and 3, and the only candidates live in filtration 3.  

We claim that the filtration $3$ class $\delta^{-1}\gamma\xi$ detects $2\nu$ in $\TMF_0(5)$.
To verify this claim, one can determine from Lemma~\ref{eq:addHFPE2} and Proposition~\ref{prop:HFPE2} that $E_2^{3,6}$ is an $\FF_2$-vector space.  One subtlety to determining this $\FF_2$-vector space is the fact that inverting $\Delta$ in $H^*(T_*)$ is equivalent to inverting $\delta$ and $b_4-11\delta$.  However, Corollary~\ref{cor:jrels}, and the discussion that follows, makes it clear that we have
$$ E_2^{3,6}/\eta^3 = \FF_2\{ \delta^{-1}\gamma\xi\}.  $$
Finally, as the $d_3$ differentials determined up to this point completely determine the differentials supported by the $0$-line, we can easily deduce that the image of $d_3$ in $E_2^{3,6}$ is precisely the image of $\eta^3$.  We therefore deduce that $\delta^{-1}\gamma\xi$ is the only potential candidate to detect $2\nu$ on the $E_3$-page of the spectral sequence.

Now observe that as a result of Corollary~\ref{cor:jrels}, and the discussion which follows, we have
$$
d_3\gamma = a\delta^{-1}\eta\gamma\xi + \sum_{k,l \ge 0} a'_{k,l} \tilde{j}^k(\tilde{j}-11)^{-l}\eta^4 + \sum_{m \ge 0} a_m'' \tilde{j}^m \delta^{-1} b_4 \eta^4  
$$
for coefficients $a, a_{k,l}', a_m'' \in \ZZ/2$ with all but finitely many equal to zero.  
The class representing $2\eta\nu$,
i.e. $\delta^{-1}\eta\gamma\xi$, must die in the spectral sequence.  Since we have already established all of the terms involving $\eta^4$ are the targets of established $d_3$-differentials, this is only
possible if $a = 1$.

We therefore have, using $b_2 \xi = \delta \eta^2$:
\begin{align*}
d_3 (b_2 \gamma) & = d_3(b_2)\gamma + b_2 d_3(\gamma) \\
& = \sum_{k,l \ge 0} a'_{k,l} \tilde{j}^k(\tilde{j}-11)^{-l}b_2 \eta^4 + \sum_{m \ge 0} a_m'' \tilde{j}^m b_2 \delta^{-1} b_4 \eta^4.
\end{align*}
Turning this around, we have
\begin{align*}
\sum_{k,l \ge 0} a'_{k,l} \tilde{j}^k(\tilde{j}-11)^{-l}b_2 \eta^4 + \sum_{m \ge 0} a_m'' \tilde{j}^m b_2 \delta^{-1} b_4 \eta^4 & = d_3(b_2\gamma) \\
& = d_3(\eta(b_2^2+b_4)) \\
& = 0.
\end{align*}
We deduce that the coefficients $a_{k,l}'$ and $a_m''$ are all zero.

Since $\delta^{-1}\gamma\xi$ is a permanent cycle, we have
\[
  0 = d_3\delta^{-1}\gamma\xi = (d_3\delta^{-1}\xi)\gamma -
  \delta^{-1}\xi(d_3\gamma).
\]
Hence $d_3\xi = \delta^{-1}\eta\xi^2$.
\end{proof}

\begin{figure}
\includegraphics[width=\textwidth]{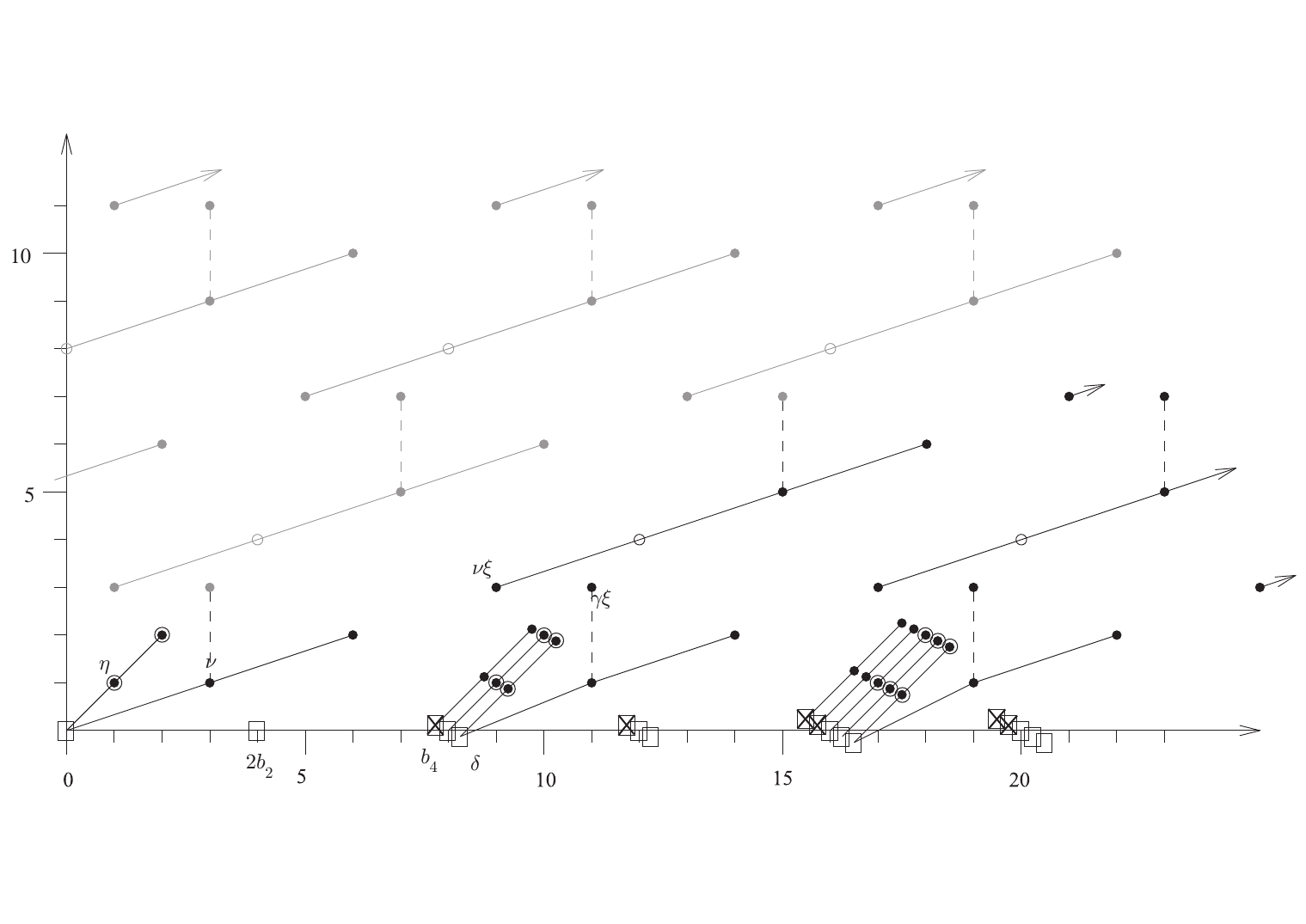}
\caption{The $E_4=E_5$ term in the homotopy fixed point spectral sequence for $\TMF_0(5)_{(2)}$.}\label{fig:HFPSSE4}
\end{figure}

\begin{cor}
The $E_4$ term of the homotopy fixed point spectral sequence is described below.
$$ E_4 = \ZZ[1/5][2b_2, b_2^2, b_4, \delta, \eta, \nu, \xi^2, \nu\xi, \gamma\xi, \delta^{-1}, (\tilde{j}-11)^{-1}]/\sim $$
where $\sim$ consists of relations

\begin{center}
\begin{tabular}{cc}
\begin{minipage}{0.4\linewidth}
\[\begin{aligned}
  b_4^2 &= b_2^2 \delta-4\delta^2,\\
  2\eta &= 0,\\
  2\nu &= 0,\\
  2\gamma\xi &= 0,\\
  4\xi^2 &= 0,\\
  \eta^3 & = 0, \\
  \nu^3 &= 0,\\
  (\gamma\xi)^2 &= 0, \\
  \eta\nu &= 0,\\
  \eta \xi^2 & = 0, \\
  \eta \gamma \xi & = 0, 
\end{aligned}\]
\end{minipage}
&
\begin{minipage}{0.4\linewidth}
\[\begin{aligned}
  b_2^2 \nu & = 0, \\
 b_4 \nu & = 0, \\
\nu (\nu\xi) & = 2\xi^2, \\
\nu (\gamma \xi) & = 0, \\
  2b_2\xi^2 &= 0,\\
b_2^2 \xi^2 & = 0, \\
  b_4\xi^2 &= 2\delta\xi^2,\\
(\nu\xi) (\gamma \xi) & = 0, \\
  b_4(\gamma \xi) &= \delta\eta^2(b_4+\delta), \\
 b_2^2 (\gamma\xi) & = 0. \\
\end{aligned}\]
\end{minipage}
\end{tabular}
\end{center}
Here we have omitted relations like $(2b_2)^2 = 4b_2^2$, $(2b_2)\nu = 0$ and $2(\nu\xi) = 0$, as they follow `from the notation'.  Everything is $\delta$-periodic, and multiplication by $(\tilde{j}-1)^{-1} = (\delta^{-1}b_4-11)^{-1}$ satisfies the following relations (which follow from those above): 
\begin{align*}
11(\tilde{j} -11)^{-1} b_4 & = b_2^2(\tilde{j}-11)^{-1} - 4\delta(\tilde{j}-11)^{-1}-b_4,
\\
(\tilde{j}-11)^{-1}\nu & = \nu,
\\
(\tilde{j}-11)^{-1}\xi^2 & = -\xi^2,
\\
(\tilde{j}-11)^{-1}\nu\xi &= \nu\xi, 
\\
(\tilde{j}-11)^{-1}\gamma\xi & = \gamma\xi.
\end{align*}
\end{cor}

Figure~\ref{fig:HFPSSE4} shows the resulting $E_4$-term in the homotopy fixed point spectral sequence for $\TMF_0(5)_{(2)}$.  The authors find this easier to visualize $(2)$-locally (i.e. ``Perspective 1'' of Section~\ref{sec:E2comp}).  Terms involving $\delta^{-1}$ are excluded on the $0$, $1$ and $2$-lines, and in lines greater than $2$ are shown in gray.  As in the other charts in this paper, solid dots denote $\ZZ/2$'s, and open circles denote $\ZZ/4$'s. If we localize at $(2)$, the other symbols in the figure denote the following:
\begin{align*}
\square & = \ZZ_{(2)}[(\tilde{j}-11)^{-1}], \\
\boxtimes & = \ZZ_{(2)}, \\
\text{\textcircled{$\bullet$}} & = \ZZ/2[(\tilde{j}-11)^{-1}].
\end{align*}

In the following sequence of propositions, we will establish the rest of the differentials in the homotopy fixed point spectral sequence. 
Figure~\ref{fig:HFPSSE8} displays these differentials.  In this figure, the gray patterns represent the (infinite rank) $bo$-patterns.

%



We will need to observe the following to compute our $d_5$-differentials.

\begin{lemma}
On the level of $E_5$-terms the restriction map (from the homotopy fixed point spectral sequence for $\TMF$ to the homotopy fixed point sequence for $\TMF_0(5)$) sends $\bar{\kappa}$ to $\delta \xi^2$.
\end{lemma}

\begin{proof}
In the homotopy fixed point spectral sequence for $\TMF$, the element $\bar{\kappa}$ is detected by $g$.
By Lemma~\ref{lem:restrictions}, we have
\begin{align*}
 \mathrm{Res}(g) & = \delta \xi^2 \mod (2, b_2, \gamma\eta).
\end{align*}
The lemma follws, as the elements of $H^4(\FF_5^\times; \pi_{24}\TMF_1(5))$ which are divisible by $2$, $b_2$, or $\gamma\eta$ are all killed by $d_3$-differentials.
\end{proof}

\begin{cor}
The element $\delta \xi^2$ is a permanent cycle in the homotopy fixed point spectral sequence for $\TMF_0(5)$.
\end{cor}

\begin{prop}\label{prop:d5}
In the homotopy fixed point spectral sequence for $\pi_*\TMF_0(5)$,
$E_4=E_5$ and the $d_5$-differentials are determined by
\begin{align*}
  d_5(2b_2) & = d_5(b_2^2) = d_5(b_4) = 0, \\
  d_5(\delta) & = \delta^{-1}\nu\xi^2, \\
  d_5(\eta) & = d_5(\nu) = d_5(\gamma \xi) = 0, \\
  d_5(\xi^2) & = \delta^{-2} \nu \xi^4, \\
  d_5(\nu \xi) & = 2\delta^{-2} \xi^4.
\end{align*}
\end{prop}

\begin{proof}[Proof of Proposition~\ref{prop:d5}, part 1]
There is no room for $d_4$-differentials.  We have already observed that $\eta$ and $\nu$ are permanent cycles.  Dimensional considerations also immediately show
\begin{align*}
d_5(2b_2) & = d_5(\gamma\xi) = 0.
\end{align*}
Note that the only possible target for a $d_5$-differential on $b_2^2$ or $b_4$ is $\nu\delta^{-1}\xi^2$.  Since $\nu^2\delta^{-1}\xi^2$ is non-trivial in $E_5$, such non-trivial differentials would only be possible if $\nu b_4$ or $\nu b_2^2$ were non-trivial, but this is not the case.  We deduce that 
$$ d_5(b_4) = d_5(b_2^2) = 0. $$ 
The element $\overline{\kappa}\in \pi_{20}S$ is in the Hurewicz image
of $\TMF$.  In the Adams-Novikov
spectral sequence for $\TMF$, $d_5 \Delta = \nu \overline{\kappa}$.  
We deduce that
\begin{align*}
\nu \delta \xi^2 & = d_5(\delta^2(b_4-11\delta)) \\
& = 2\delta d_5(\delta) (b_4-11\delta) + \delta^2 d_5(b_4) - 11\delta^2d_5(\delta) \\
& = 2\delta b_4 d_5(\delta) - 33 \delta^2 d_5(\delta). 
\end{align*}
Since the only available class for $d_5(\delta)$ to hit is $2$-torsion in the $E_5$-page, we deduce that
$$ \delta^2 d_5(\delta) = \nu \delta \xi^2. $$
We have already observed that $\delta \xi^2$ is a permanent cycle since it detects $\bar\kappa$.  We may therefore compute
\begin{align*}
0 & = d_5(\delta \xi^2), \\
& = d_5(\delta)\xi^2 + \delta d_5(\xi^2), \\
& = \delta^{-1}\nu\xi^4 + \delta d_5(\xi^2).
\end{align*}
We deduce that 
$$ d_5(\xi^2) = \delta^{-2}\nu \xi^4.  $$
The only class left to handle is $\nu\xi$.  We will defer the proof of this differential until after we establish the $d_7$-differentials.
\end{proof}

\begin{figure}
\includegraphics[width=\textwidth]{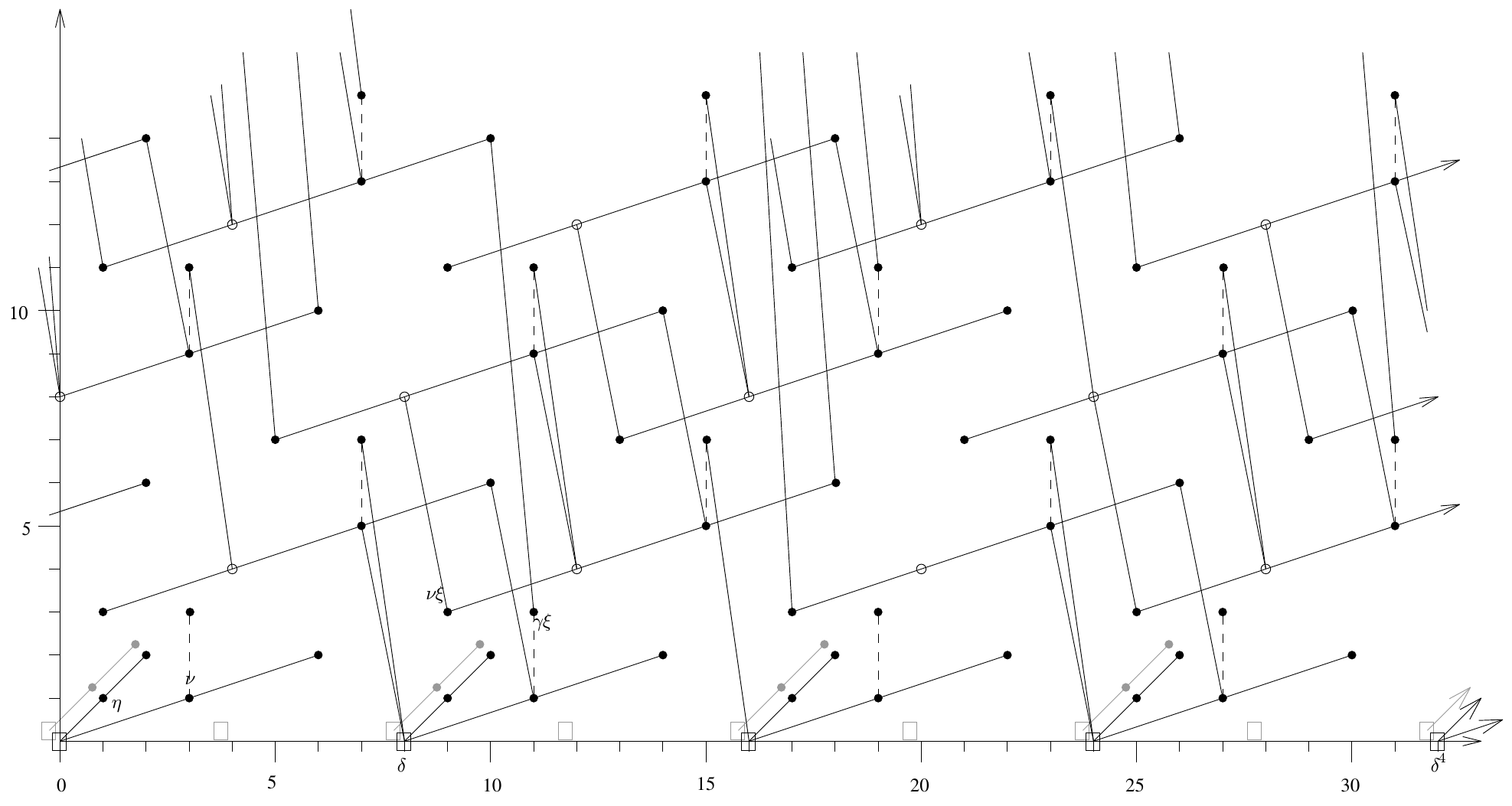}
\caption{The $E_4$ term in the homotopy fixed point spectral sequence for $\TMF_0(5)$ with $d_r$-differentials, $r \ge 4$.}\label{fig:HFPSSE8}
\end{figure}

\begin{prop}\label{prop:d7}
In the homotopy fixed point spectral sequence for $\pi_*\TMF_0(5)$,
$E_6 = E_7$ and the $d_7$-differentials are determined by
\[\begin{aligned}
d_7(2b_2) = d_7(b_2^2) = d_7(\eta) = d_7(\nu) & = d_7(\delta \xi^2) = d_7(\delta\nu\xi) = d_7(\gamma\xi) = d_7(\delta\gamma\xi)  = 0, \\
  d_7 (2\delta) = d_7(b_4) & = \delta^{-2}\gamma\xi^3,\\
  d_7(\delta^2) & = \delta^{-1}\gamma\xi^3, \\
  d_7(\delta b_4) & = 0. \\
\end{aligned}\]
\end{prop}
\begin{proof}
There is no room for $d_6$-differentials.
We have already observed that $\eta$, $\nu$, and $\delta\xi^2$ are permanent cycles, since they are in the Hurewicz image.  
The elements $2b_2$, $b_2^2$, $\delta\nu\xi$, $\gamma\xi$, and $\delta\gamma\xi$ are $d_7$-cycles for dimensional reasons.

In order to establish the next round of differentials, we will first determine $d_7(2\delta^3)$ and $d_7(\delta^2 b_4)$ (of course, these differentials are determined by $d_7(2\delta)$, $d_7(b_4)$, and $d_7(\delta^2)$).  Note that $2\nu\overline{\kappa}$ is $0$
in $\pi_*\TMF$, from which we deduce that the class represented by $\delta^{-1}\gamma\xi\overline{\kappa}$ is $0$
in $\pi_*\TMF_0(5)$ via the restriction map.  
The element $\gamma\xi^3$
detects this class, so it must be the target of a differential, and the only (not necessarily exclusive) possibilities at this point are:
\begin{align*}
\text{Case 1:} & \quad d_7(2\delta^3) = \gamma \xi^3, \\
\text{Case 2:} & \quad d_7(\delta^{2-i} b_4 b^{2i}_2) = \gamma \xi^3, \quad \text{for some} \: i \ge 0, \\
\text{Case 3:} & \quad d_7(\delta^{2-i} b^{2i+2}_2) = \gamma \xi^3, \quad \text{for some} \: i \ge 0.
\end{align*}
Multiplying by the permanent cycle $\mathrm{Res}(\bar{\kappa}) = \delta \xi^2$, Cases $2$ yields
$$ d_7(\delta^{5-i} \eta^4 b^{2i}_2 + 2\delta^{4 - i}\xi^2b_2^{2i} + \delta^{4-i} \xi \gamma \eta) \ne 0. $$
If $i > 0$, this is a contradiction because 
$$ \delta^{5-i} \eta^4 b^{2i}_2 = 2\delta^{4 - i}\xi^2b_2^{2i} = \delta^{4-i} \xi \gamma \eta = 0 $$
in the $E_7$-page for $i > 0$.  
Therefore Case $2$ for $i > 0$ cannot occur.  Similarly, multiplying Case $3$ by $\bar{\kappa}$ gives
$$ d_7(\delta^{5-i} b_2^{2i} \eta^4) \ne 0, $$
again a contradiction.  We conclude that either Case 1 or Case 2 with $i = 0$ must hold.  
Therefore
\begin{align*}
d_7(2\delta^3) & = a\gamma \xi^3, \\
d_7(\delta^2 b_4) & = b \gamma \xi^3
\end{align*}
with $a = 1$ or $b = 1$.  Multiplying both of the above differentials by $\bar{\kappa}$ yields:
\begin{align*}
d_7(2\delta^4 \xi^2) & = a\delta \gamma \xi^5, \\
d_7(2\delta^4 \xi^2) & = b \delta \gamma \xi^5.
\end{align*}
We deduce that $a = b = 1$.  Hence we deduce that
\begin{align*}
d_7(2\delta^3) & = \gamma \xi^3, \\
d_7(\delta^2 b_4) & = \gamma \xi^3.
\end{align*}
We now turn our attention to $d_7(2\delta)$, $d_7(b_4)$, and $d_7(\delta^2)$.  The only possible targets for these differentials are $\delta^{-2}\gamma \xi^3$ and $\delta^{-1} \gamma \xi^3$, respectively.  Write
\begin{align*}
d_7(2\delta) & =  c\delta^{-2}\gamma \xi^3, \\
d_7(b_4) & = d\delta^{-2}\gamma \xi^3, \\
d_7(\delta^2) & = e\delta^{-1}\gamma \xi^3.
\end{align*}
Then we have
\begin{align*}
\gamma \xi^3 & = d_7 (\delta^2 b_4) \\
& = d_7(\delta^2)b_4 + \delta^2 d_7(b_4) \\
& = e\delta^{-2}\gamma \xi^3 b_4 + d\gamma \xi^3.
\end{align*}
Using the relations we find that $\delta^{-2}\gamma \xi^3 b_4 = 0$, and we therefore deduce that $d = 1$.
Similarly, we have
\begin{align*}
\gamma \xi^3 & = d_7 (2\delta^3) \\
& = d_7(\delta^2)2\delta + \delta^2 d_7(2\delta) \\
& = 2e\delta^{-1}\gamma \xi^3 + c\gamma \xi^3.
\end{align*}
Since $2\delta^{-1}\gamma \xi^3 = 0$, we deduce that $c = 1$.
We have shown
\begin{align*}
d_7(2\delta) & = \delta^{-2}\gamma \xi^3, \\
d_7(b_4) & = \delta^{-2} \gamma \xi^3.
\end{align*}
To establish the final $d_7$ differential on $\delta^2$, note that the restriction map $\TMF \to \TMF_0(5)$ takes 
$2\nu\Delta$ to
$2\nu\Delta$ which is nonzero in $\pi_*\TMF_0(5)$.  Since
$2\nu\Delta\overline{\kappa} = 0 \in \pi_*\TMF$, we know
$2\nu\Delta\overline{\kappa} = 0 \in \pi_*\TMF_0(5)$.  The element
$\gamma\xi^3\delta^3$ detects this class.  It
follows that $\delta^{-1} \gamma \xi^3$ must be the target of a differential.
By the same argument used earlier, multiplication by $\bar{\kappa}$ shows that the only possible sources of a differential killing $\delta^{-1}\gamma \xi^3$ are $\delta^2$ and $\delta b_4$.
Write
\begin{align*}
d_7(\delta^2) & = e\delta^{-1}\gamma \xi^3, \\
d_7(\delta b_4) & = f\delta^{-1} \gamma \xi^3.
\end{align*}
so that $e$ or $f$ equals $1$ mod $2$.  Multiplying both of these differentials by $\bar\kappa$ yields
\begin{align*}
d_7(\delta^3 \xi^2) & = e\gamma \xi^5, \\
d_7(2\delta^3 \xi^2) & = f\gamma \xi^5.
\end{align*}
Thus we have $e \equiv 1$ mod $2$, and $f \equiv 0$ mod $2$, and 
\begin{align*}
d_7(\delta^2) & = \delta^{-1}\gamma \xi^3, \\
d_7(\delta b_4) & = 0.
\end{align*}
\end{proof}

\begin{proof}[Proof of Proposition~\ref{prop:d5}, part 2]
We now return to the proof of Proposition~\ref{prop:d5} to establish the one remaining differential, $d_5(\nu\xi)$.  We note that
$$ d_5(\delta \nu \xi) = 0 $$
since the only possible non-trivial target of such a differential would be $2\xi^2$, and this supports a non-trivial $d_7$-differential by Proposition~\ref{prop:d7}.  We therefore have
\begin{align*}
0 & = d_5(\delta \nu \xi) \\
& = \delta^{-1}\nu^2\xi^3 + \delta d_5(\nu \xi) \\
& = \delta^{-1}2\xi^4 + \delta d_5(\nu \xi).
\end{align*}
We conclude that we have
$$ d_5(\nu \xi) = 2\delta^{-2}\xi^4. $$
\end{proof}

To handle the next round of differentials we will need the following lemma.

\begin{lemma}
The Hurewicz image of the element $\kappa$ in $\pi_{14}\TMF$ is restricts to a non-trivial class in $\pi_{14}\TMF_0(5)$, detected by $\nu^2 \delta$ in the homotopy fixed point spectral sequence.
\end{lemma}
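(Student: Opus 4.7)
The plan is to compute the image of $\kappa$ under the restriction map $\TMF \to \TMF_0(5)$ by tracking its detecting class through the induced map of descent spectral sequences, then identifying the result with $\nu^2 \delta$ and confirming it survives.

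First, I would recall from Bauer \cite{Tilman} that the class $\kappa \in \pi_{14}\TMF$ is detected in the Adams-Novikov spectral sequence by an explicit 2-cocycle $\hat\kappa \in H^2(\mM^1;\omega^{\otimes 7})$ on the Weierstrass Hopf algebroid $(A,\Gamma)$. The restriction $\mathrm{Res}:\TMF \to \TMF_0(5)$ induces a map of descent spectral sequences, and on $E_2$-pages this map is given by pullback along the forgetful morphism $f:\mM_0^1(5)\to \mM^1$, whose explicit formula is recorded in Corollary~\ref{cor:fqM1}.

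Second, I would pin down the possibilities for the detecting class of $\mathrm{Res}(\kappa)$. The bidegree in question is $(t-s,s) = (14,2)$, and by Proposition~\ref{prop:HFPE2} the only candidates in $H^2(\FF_5^\times; \pi_{14}\TMF_1(5))$ are $\xi\delta$ of order $4$ and its double $\nu^2\delta = 2\xi\delta$ of order $2$. Since $2\kappa = 0$ in $\pi_*\TMF$, the class detecting $\mathrm{Res}(\kappa)$ must be $2$-torsion modulo higher filtration, ruling out $\xi\delta$ and pointing to $\nu^2\delta$. I would confirm that the pullback $f^*\hat\kappa$ is indeed nonzero (and hence equal to $\nu^2\delta$) by a direct cochain-level computation using the formula for $f^*$.

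Third, I would verify that $\nu^2\delta$ is a nonzero permanent cycle in the homotopy fixed point spectral sequence. Using the differentials established in Propositions~\ref{prop:d3}, \ref{prop:d5}, and \ref{prop:d7}, together with the relations $2\nu=0$, $2\gamma=0$, and $\nu\gamma=0$, one has for instance
\[
  d_5(\nu^2\delta) = \nu^2 \cdot d_5(\delta) = 2\xi \cdot \delta^{-1}\nu\xi^2 = \delta^{-1}(2\nu)\xi^3 = 0,
\]
and an analogous calculation handles $d_7$ via $d_7(2\delta) = \delta^{-2}\gamma\xi^3$ after multiplying by $\nu^2$. Higher differentials vanish for bidegree reasons, and $\nu^2\delta$ is not in the image of any prior differential in its bidegree.

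The main obstacle is the explicit pullback computation of $f^*\hat\kappa$ in the second step, which requires carrying Bauer's 2-cocycle representative through the Weierstrass-to-Tate-normal-form restriction; this is algebraically involved but mechanical given Corollary~\ref{cor:fqM1}.
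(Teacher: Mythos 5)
Your route is genuinely different from the paper's. The paper never touches an explicit cocycle for $\kappa$: it applies the transfer--restriction argument (Corollary~\ref{cor:trres}) to the order-$4$ element $\Delta^4\kappa \in \pi_{110}\TMF$ (note that $\kappa$ itself has order $2$, so the corollary cannot be applied to it directly), locates the resulting nonzero class in filtration between $4$ and $14$ where $\nu^2\delta^{13}$ is the only survivor, and then uses $\delta$-periodicity of the $E_2$-page to divide back down to $\nu^2\delta$. This buys nontriviality and the identification of the detecting class simultaneously, at the cost of needing the $d_5$-differentials and a specific order-$4$ class high in the homotopy of $\TMF$. Your plan instead pushes a cobar representative of $\kappa$ through the map of descent spectral sequences induced by $f$, which in principle is more direct and self-contained.

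There are, however, two concrete problems. First, your enumeration of candidates in bidegree $(t-s,s)=(14,2)$ is wrong: by Proposition~\ref{prop:HFPE2}, $H^2(\FF_5^\times;\pi_{16}\TMF_1(5))\cong(\ZZ/2)^4\oplus\ZZ/4$, spanned by $\eta^2 b_2^3$, $\eta^2 b_2 b_4$, $\eta^2 b_2\delta$, $\eta\gamma\delta$ and $\xi\delta$ (with $\nu^2\delta=2\xi\delta$), not just by $\xi\delta$ and its double. The observation that $2\kappa=0$ therefore rules out only $\xi\delta$ itself and leaves four independent $2$-torsion classes (and their sums) as possible detectors; it does not ``point to'' $\nu^2\delta$. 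Moreover this step presupposes $\mathrm{Res}(\kappa)\neq 0$, which is part of what must be proved. Consequently the entire weight of your argument falls on the cochain-level computation of $f^*\hat{\kappa}$, and that is the second problem: you have not exhibited an explicit $2$-cocycle for $\kappa$ in the cobar complex of $(A,\Gamma)$. Bauer's computation produces the class detecting $\kappa$ through a chain of Hopf-algebroid reductions and filtration arguments rather than as an explicit Weierstrass cobar cochain, and one must then also write down the comparison map from $C^*_\Gamma(A)$ to the standard complex computing $H^*(\FF_5^\times;B^1(5))$ before comparing with the cocycle representatives of Proposition~\ref{prop:HFPE2}. None of this is impossible, but it is far from ``mechanical,'' it is not carried out, and it is exactly the computation the paper's transfer argument is engineered to avoid. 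As written, the proposal therefore has a genuine gap at its central step. (Your step 3, checking that $\nu^2\delta$ is a permanent cycle via $2\nu=0$ and the Leibniz rule, is fine and consistent with the established differentials.)
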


\begin{proof}
Applying Corollary~\ref{cor:trres} to the class $\Delta^4 \kappa \in \pi_{110} \TMF$ of order $4$, we find that $\mathrm{Res}(\Delta^4 \kappa)$ is non-trivial, and detected in the homotopy fixed point spectral sequence by a class in filtration between $4$ and $14$.  Given our $d_5$-differentials, the only candidate is $\nu^2 \delta^{13}$.
Since $E_2$ is $\delta$-periodic, and since $\kappa$ is detected in filtration $2$ in $\TMF$, it follows that on the level of $E_2$ pages $\kappa$ restricts to $\nu^2 \delta$.  The lemma follows, since $\nu^2 \delta$ is not the target of a differential.
\end{proof}

\begin{prop}\label{prop:d11}
In the homotopy fixed point spectral sequence for $\pi_*\TMF_0(5)$,
$E_8 = E_9 = E_{10}$ and the $d_{11}$-differentials are determined by
\[
  d_{11} (\gamma\xi) = \delta^{-4}\xi^7.
\]
\end{prop}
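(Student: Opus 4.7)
The plan is to combine three kinds of argument already used earlier in this section: ruling out intermediate differentials by bidegree inspection on the surviving $E_r$ pages, restricting known ANSS differentials for $\TMF$ along the map $\mathrm{Res}\colon \pi_*\TMF \to \pi_*\TMF_0(5)$, and propagating differentials via the Leibniz rule using the permanent cycles $\bar\kappa = \delta\xi^2$ and $\kappa = \nu^2\delta$ (these being the restrictions of the Hurewicz images of $\bar\kappa, \kappa \in \pi_*\TMF$).

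To establish $E_8 = E_9 = E_{10}$, I would enumerate the surviving generators of $E_8$ after applying Propositions~\ref{prop:d3}, \ref{prop:d5}, and \ref{prop:d7}. Off Adams--Novikov filtration zero, the remaining classes all sit in the $bo$-patterns visible in Figure~\ref{fig:HFPSSE8}, built from products of $\eta,\gamma,\nu,\xi$ and $\delta^{\pm 1}$. A bidegree-by-bidegree check should eliminate every candidate $d_8$, $d_9$, or $d_{10}$: in most cases either the candidate source supports no differential of the right length on grading grounds, or the candidate target has already been hit or is the image of a Hurewicz class. Where neither observation suffices, one can multiply a hypothetical differential by the permanent cycles $\bar\kappa = \delta\xi^2$ or $\kappa = \nu^2\delta$ to derive a contradiction with the relations of Proposition~\ref{prop:HFPE2}, as was done in the proofs of Propositions~\ref{prop:d5} and \ref{prop:d7}.

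For the $d_{11}$ itself, I would argue by analogy with the proofs of $d_5\delta$ and $d_7\delta^2$. The target $\delta^{-4}\xi^7$ lives in bidegree $(t-s,s) = (10,14)$ and is the only surviving class in its bidegree on $E_{11}$, so identifying the target reduces to showing $d_{11}(\gamma\xi)$ is nontrivial. The natural strategy is to find a class in $\pi_*\TMF$ whose restriction to $\TMF_0(5)$ is detected by $\gamma\xi$ after multiplication by a permanent cycle (the natural candidates involve $\nu$, $\bar\kappa$, and powers of $\Delta$), and then to invoke a known $d_{11}$ in the ANSS for $\TMF$, combined with the analysis of $\mathrm{Res}$ and Corollary~\ref{cor:trres}, to force the differential. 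Alternatively, one can show directly that $\gamma\xi$ cannot be a permanent cycle by observing that the class it would detect would be annihilated by both $\kappa$ and $\bar\kappa$ (since $\gamma\xi\cdot\nu^2\delta = 0$ and $\gamma\xi\cdot\delta\xi^2 = \delta\gamma\xi^3 = 0$ on $E_8$) while simultaneously being forced to be nonzero by transfer--restriction from $\TMF$, yielding a contradiction.

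The main obstacle will be the first step: a clean case analysis for $E_8 = E_9 = E_{10}$ requires a methodical sweep through the multiplicative structure on $E_8$, and in a handful of awkward bidegrees it may be necessary to use the explicit cochain representatives developed in the proof of Proposition~\ref{prop:HFPE2}. Once those pages are known to be static, identifying the $d_{11}$ target is immediate from the sparsity of $E_{11}$ in bidegree $(10,14)$, so the calculation reduces to establishing nontriviality of $d_{11}(\gamma\xi)$.
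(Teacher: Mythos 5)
There is a genuine gap here. Your toolkit is the right one (restricting relations from $\pi_*\TMF$, multiplying by the permanent cycles $\mathrm{Res}(\bar\kappa)=\delta\xi^2$ and $\mathrm{Res}(\kappa)=\nu^2\delta$, and using sparsity), and your treatment of $E_8=E_9=E_{10}$ matches what the paper leaves implicit. But you never actually produce the input that forces the $d_{11}$. The paper's proof runs in the opposite logical direction from yours: rather than showing the \emph{source} $\gamma\xi$ must support a differential, it shows the \emph{target} must be hit. The key fact is the relation $\bar\kappa^3\kappa=0$ in $\pi_{74}\TMF$; since $\mathrm{Res}$ is a ring map, the product $\mathrm{Res}(\bar\kappa)^3\mathrm{Res}(\kappa)$, detected by $\delta^4\xi^7$, must die in the spectral sequence, and by inspection of $E_{11}$ the only possible differential hitting it is $d_{11}(\delta^8\gamma\xi)=\delta^4\xi^7$; $\delta^4$-periodicity of $E_{11}$ (which the paper justifies by noting $\delta^4$ survives with no available target) then gives the stated formula. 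This single relation is the missing idea in your write-up.

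Both of your proposed routes to nontriviality of $d_{11}(\gamma\xi)$ fail concretely. The first (find a $\TMF$-class restricting to a multiple of $\gamma\xi$ and import a $d_{11}$ from the ANSS of $\TMF$) founders because $\gamma\xi$ sits in stem $11$ and $\pi_{11}\TMF=0$, so there is no class to restrict and no transfer--restriction argument available in that bidegree. The second is not a valid contradiction even on its own terms: a nonzero homotopy class annihilated by both $\kappa$ and $\bar\kappa$ is perfectly consistent, so ``$z\kappa=z\bar\kappa=0$ yet $z\ne 0$'' forces nothing. Moreover one of your two vanishing claims is false: while $\gamma\xi\cdot\nu^2\delta=0$ already at $E_2$ (since $\nu\gamma=0$), the class $\gamma\xi\cdot\delta\xi^2=\delta\gamma\xi^3$ is \emph{not} zero on $E_8$. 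The $d_7$-differentials of Proposition~\ref{prop:d7} kill $\delta^m\gamma\xi^3$ only for $m\not\equiv 1\bmod 4$ (via $d_7(2\delta^{2k+1})=\delta^{2k-2}\gamma\xi^3$ and $d_7(\delta^{2k})=k\,\delta^{2k-3}\gamma\xi^3$), so $\delta\gamma\xi^3$ survives to $E_8$ --- indeed it is exactly the class that supports the $\bar\kappa$-multiple of the $d_{11}$ you are trying to establish. To repair the proof you should replace both routes with the paper's: identify $\delta^4\xi^7$ as the detecting class of $\mathrm{Res}(\bar\kappa^3\kappa)$ and force it to be a boundary.
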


\begin{proof}
In $\pi_*\TMF$ we have $\bar{\kappa}^3 \kappa = 0$.  The restriction of this element in $\TMF_0(5)$ is detected  in the homotopy fixed point spectral sequence by $\delta^4 \xi^7$, so the latter must be the target of a differential.  The only possibility is $d_{11} (\delta^8 \gamma \xi) = \delta^4 \xi^7$.  Since $\delta^{4}$ persists to the $E_{11}$-page, and there are no non-trivial targets for $d_{11}(\delta^4)$, it follows that $E_{11}$ is $\delta^4$-periodic, and the proposition follows.
\end{proof}

\begin{prop}\label{prop:d13}
In the homotopy fixed point spectral sequence for $\pi_*\TMF_0(5)$,
$E_{12} = E_{13}$ and the $d_{13}$-differentials are determined
by
\[\begin{aligned}
  d_{13}(\delta\nu\xi) &= \delta^{-4}\xi^8,\\
  d_{13}(\delta^{3}\nu^2) &= \delta^{-2}\nu\xi^7.
\end{aligned}\]
\end{prop}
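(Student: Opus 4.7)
The plan is to adapt the strategy used in Propositions~\ref{prop:d5}--\ref{prop:d11}: first identify targets on the $E_{13}$-page that are forced to die by homotopy-theoretic or dimensional constraints, then verify that the candidate source in each case is the unique possibility at $E_{13}$. The equality $E_{12}=E_{13}$ is immediate, since the spectral sequence supports only odd-index differentials, and we have already determined $d_3$, $d_5$, $d_7$, and $d_{11}$, so no non-trivial $d_{12}$ can occur.

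For the first differential $d_{13}(\delta\nu\xi)=\delta^{-4}\xi^8$: the target sits in bidegree $(t-s,s)=(16,16)$, so if it survived it would contribute a filtration-$16$ class to $\pi_0\TMF_0(5)$. Since $\pi_0\TMF_0(5)=\ZZ[1/5]$ is concentrated in filtration $0$, the class $\delta^{-4}\xi^8$ must die. All earlier differentials have been exhausted, so it must be hit at $d_{13}$ or later. Using the multiplicative relations of Proposition~\ref{prop:HFPE2} together with the previously established differentials to track survivors, the only class in bidegree $(17,3)$ capable of supporting a $d_{13}$ with this target is $\delta\nu\xi$.

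For the second differential $d_{13}(\delta^3\nu^2)=\delta^{-2}\nu\xi^7$: the target lies at $(29,15)$, hence in $\pi_{14}\TMF_0(5)$ in filtration $15$. By the lemma preceding Proposition~\ref{prop:d11}, the Hurewicz image of $\kappa\in\pi_{14}\TMF$ restricts non-trivially and is detected in filtration $2$ by $\nu^2\delta$; combined with the transfer-restriction bound of Corollary~\ref{cor:trres} applied to $\kappa$, this shows that $\pi_{14}\TMF_0(5)$ has no room for a filtration-$15$ contribution. Therefore $\delta^{-2}\nu\xi^7$ must die. By inspection of the survivors in bidegree $(30,2)$ on $E_{13}$, the unique candidate source is $\delta^3\nu^2$, yielding the asserted differential.

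The main obstacle will be carefully enumerating the survivors in the relevant bidegrees $(17,3)$ and $(30,2)$ of $E_{13}$ in order to rigorously establish uniqueness of the source, which requires tracking the multiplicative relations from Proposition~\ref{prop:HFPE2} through the tower of already-computed differentials. A secondary subtlety is ruling out the possibility that $\delta^{-2}\nu\xi^7$ detects a hidden $2$-extension on $\nu^2\delta$; this is handled by observing that $2\kappa=0$ already holds in $\pi_*\TMF$, so no such extension can obstruct killing the higher-filtration class and the differential is forced.
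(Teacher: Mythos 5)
Your degree bookkeeping undermines both of the ``forced to die'' arguments, and this is not a cosmetic slip. In the $(t-s,s)$ grading of Proposition~\ref{prop:HFPE2}, $\delta^{-4}\xi^{8}$ sits in bidegree $(16,16)$, so if it survived it would contribute to $\pi_{16}\TMF_0(5)$, not to $\pi_0\TMF_0(5)$; the fact that $\pi_0\TMF_0(5)=\ZZ[1/5]$ is therefore irrelevant, and you have no argument that this class dies. Likewise $\delta^{-2}\nu\xi^{7}$ sits in $(29,15)$ and contributes to $\pi_{29}\TMF_0(5)$, not $\pi_{14}$ (you appear to have subtracted the filtration twice), so the appeal to $\kappa$ and the structure of $\pi_{14}\TMF_0(5)$ does not apply. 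Since neither $\pi_{16}$ nor $\pi_{29}$ of $\TMF_0(5)$ is known in advance, you cannot get these vanishing statements for free; some external input is needed. The paper's input is $\bar{\kappa}^{6}=0$ in $\pi_{*}\TMF$: its restriction is detected by $\delta^{6}\xi^{12}$, which must therefore be a target; the source is pinned down not by a raw bidegree count but by the observation that $\bar{\kappa}\cdot\delta^{6}\xi^{12}\ne 0$ in $E_{13}$ forces the source $x$ to satisfy $\bar{\kappa}\cdot x\ne 0$, leaving only $\delta^{11}\nu\xi^{5}$; one then divides by $\bar{\kappa}^{2}=(\delta\xi^{2})^{2}$ and invokes $\delta^{4}$-periodicity to land on $d_{13}(\delta\nu\xi)=\delta^{-4}\xi^{8}$.

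The second differential is the more serious structural divergence: in the paper it is \emph{not} established by an independent ``target must die'' argument but is propagated from the first differential. One multiplies the first differential by the permanent cycle $\delta^{8}\xi^{3}\nu$ to see that $\delta^{4}\xi^{11}\nu=\bar{\kappa}^{4}\xi^{3}\nu$ is hit by $d_{13}(\delta^{9}\xi^{4}\nu^{2})$, then divides by $\bar{\kappa}^{2}$ and uses periodicity to conclude $d_{13}(\delta^{3}\nu^{2})=\delta^{-2}\nu\xi^{7}$. Your proposed route for this differential would require knowing $\pi_{29}\TMF_0(5)$ in the relevant filtration, which is circular at this stage. Finally, you defer the enumeration of possible sources in the relevant bidegrees as ``the main obstacle''; in the paper that enumeration is exactly where the multiplicative constraint ($\bar{\kappa}\cdot x\ne 0$) does the work, so as written the proposal is missing both the correct reason the targets die and the mechanism that isolates the sources.
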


\begin{proof}
In $\pi_* \TMF$ we have $\bar{\kappa}^6 = 0$.  Since $\mathrm{Res}(\bar{\kappa}^6)$ is detected by $\delta^6 \xi^{12}$ in the homotopy fixed point spectral sequence for $\TMF_0(5)$, the latter must be the target of a differentials.  Since $\bar{\kappa}\delta^6 \xi^{12}$  is non-trivial in $E_{13}$, if $d_r(x) = \delta^6 \xi^{12}$ it must be the case that $\bar{\kappa} \cdot x \ne 0$.  The only such candidate is
$$ d_{13}(\delta^{11} \nu \xi^5) =  \delta^6 \xi^{12}. $$
Dividing by $\bar{\kappa}^2$, it follows that we have
$$ d_{13}(\delta^{9} \nu \xi) =  \delta^4 \xi^{8}. $$
Since $\delta^4$ persists to $E_{13}$ with no possible targets for a non-trivial $d_{13}(\delta^4)$, it follows that
$$ d_{13}(\delta \nu \xi) =  \delta^{-4} \xi^{8}. $$

The differential $d_{13}\delta^{3}\nu^2 = \delta^{-2}\nu\xi^7$ actually follows from the differential above, though perhaps not so obviously, so we will explain in more detail.  The element $\xi^3 \nu$ persists to the $E_{13}$-page, and there are no possibilities for it supporting a non-trivial $d_{13}$-differential.  However, by the previous paragraph,
$$ \bar{\kappa}^4 \xi^3 \nu = \delta^4 \xi^{11} \nu = d_{13}(\delta^9 \xi^4 \nu^2) \ne 0 \in E_{13}. $$
Dividing by $\bar{\kappa}^2$, we get
$$ d_{13}(\delta^7 \nu^2) = \delta^2 \xi^{7} \nu $$
and thus
$$ d_{13}(\delta^3 \nu^2) = \delta^{-2} \xi^{7} \nu. $$
\end{proof}

This concludes the determination of the differentials in the homotopy fixed point spectral sequence, there are no further possibilities.  We now turn to determining the hidden extensions in this spectral sequence.  To do this, we will recompute $\pi_* \TMF_0(5)$ using a homotopy orbit spectral sequence.  This different presentation will turn out to elucidate the multiplicative structure missed by the homotopy fixed point spectral sequence.

\begin{figure}
\includegraphics[width=\textwidth]{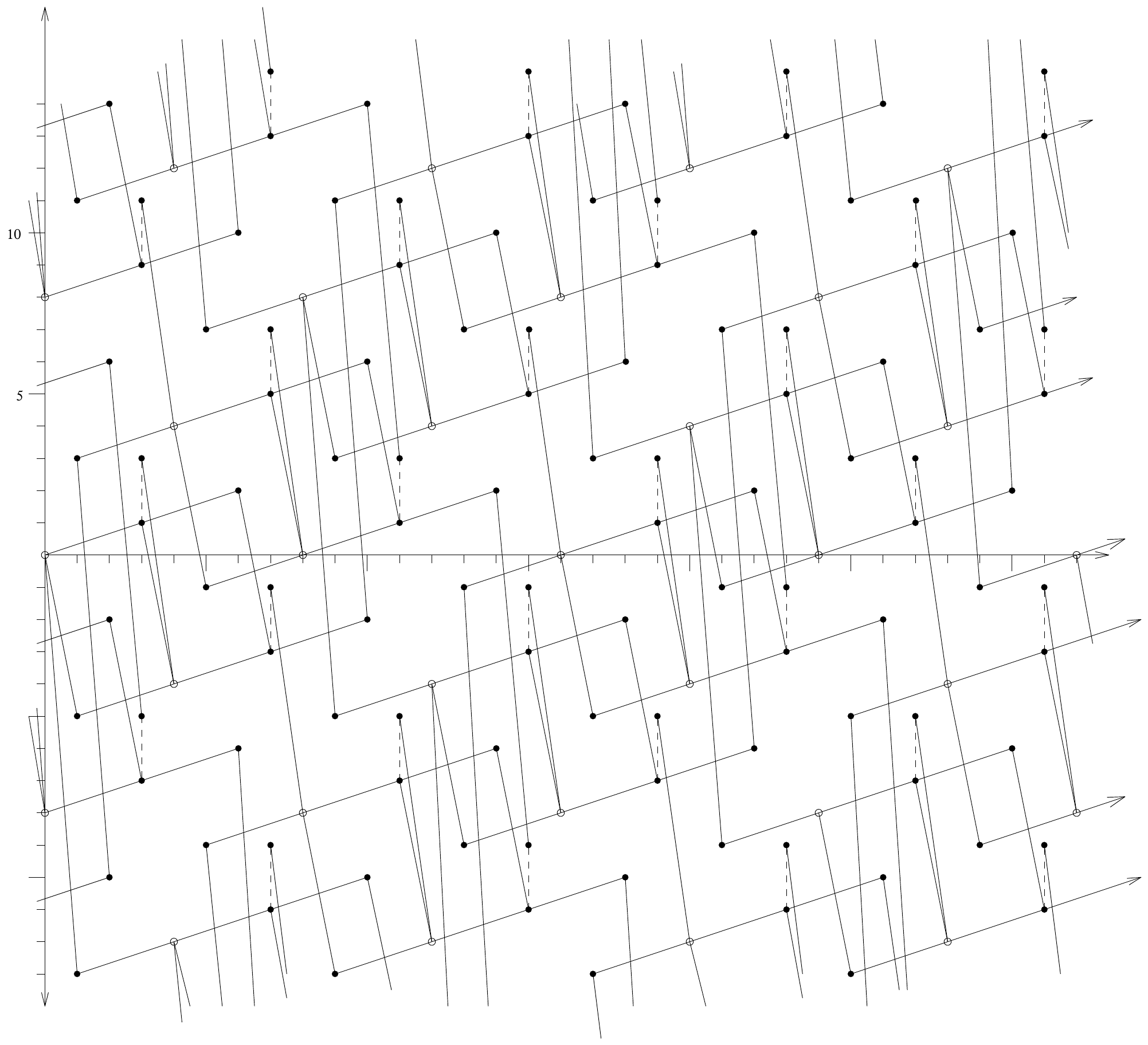}
\caption{The $E_4$ term in the Tate spectral sequence for $\TMF_1(5)^{t\FF_5^\times}$ with $d_r$-differentials, $r \ge 4$.}\label{fig:tateSSE8}
\end{figure}

\begin{figure}
\includegraphics[width=\textwidth]{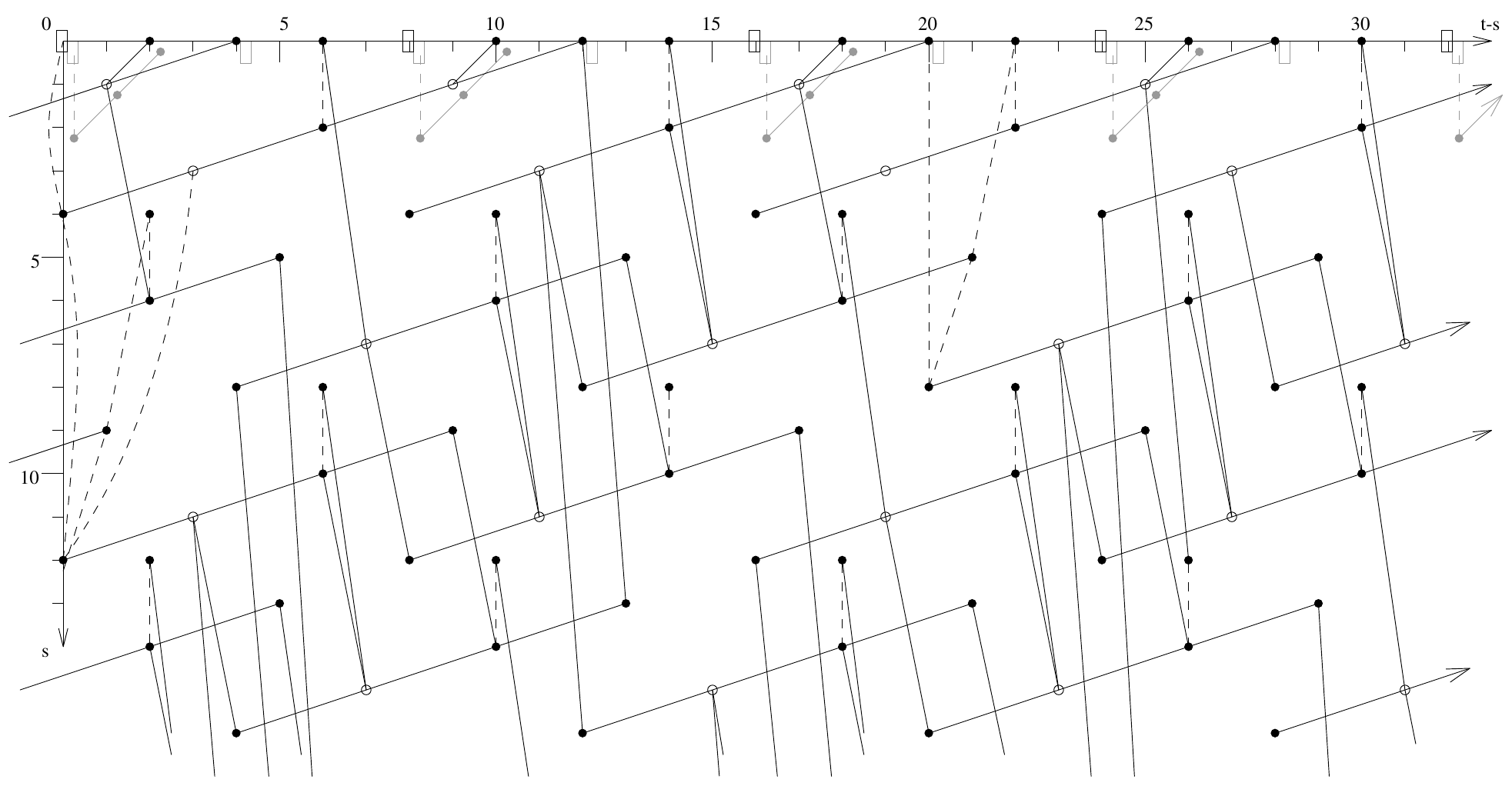}
\caption{The $E_4$ term in the homotopy orbit spectral sequence for $\TMF_1(5)_{h\FF_5^\times}$ with $d_r$-differentials, $r \ge 4$.}\label{fig:HOSSE8}
\end{figure}

The Tate spectral sequence
$$ \widehat{H}^s(\FF_5^\times ; \pi_t \TMF_1(5)) \Rightarrow \pi_{t-s} \TMF_1(5)^{t\FF^\times_5} $$
can be easily computed from the homotopy fixed point spectral sequence --- one simply has to invert $\xi$.
A picture of the resulting spectral sequence (just from $E_4$ and beyond) is displayed in Figure~\ref{fig:tateSSE8}.

Note that everything dies in this spectral sequence.  Therefore, we have established the following lemma.  (There may be other more conceptual ways of proving the following Lemma --- for instance, it is well known to hold $K(2)$-locally, and the unlocalized statement might follow from the fact that $\mathcal{M}_1(5) \rightarrow \mathcal{M}_0(5)$ is a Galois cover).

\begin{lemma}
The Tate spectrum $\TMF_1(5)^{t\FF_5^\times}$ is trivial, and therefore the norm map
$$ N: \TMF_1(5)_{h\FF_5^\times} \rightarrow \TMF_1(5)^{h\FF_5^\times} $$
is an equivalence.
\end{lemma}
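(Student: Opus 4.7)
The plan is to prove triviality of the Tate spectrum by showing that the Tate spectral sequence
\[
\widehat{H}^s(\FF_5^\times; \pi_t\TMF_1(5)) \Longrightarrow \pi_{t-s}\TMF_1(5)^{t\FF_5^\times}
\]
has $E_\infty = 0$. Granted this, the Tate cofiber sequence
\[
\TMF_1(5)_{h\FF_5^\times} \xrightarrow{N} \TMF_1(5)^{h\FF_5^\times} \to \TMF_1(5)^{t\FF_5^\times}
\]
immediately forces the norm map to be an equivalence.

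First I would identify the Tate $E_2$-page as the $\xi$-inversion of the HFPSS $E_2$-page of Proposition \ref{prop:HFPE2}. Since $\FF_5^\times \cong C_4$ has $2$-periodic cohomology with periodicity class $\beta$, Tate cohomology is $\beta$-periodic. In $\pi_*\TMF_1(5)$ the class $\delta = x^2y^2$ is already invertible (inverting $\Delta = x^5y^5(y^2+11xy-x^2)$ forces $x$ and $y$ to be units), so inverting $\beta$ is equivalent to inverting $\xi = \beta\delta$. The Tate $E_2$-page is therefore obtained from Figure \ref{fig:HFPSSE2} by adjoining $\xi^{-1}$, filling in the negative cohomological degrees with $\xi^{-k}$-multiples of the existing classes.

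Next I would transport each differential established in Propositions \ref{prop:d3}, \ref{prop:d5}, \ref{prop:d7}, \ref{prop:d11}, and \ref{prop:d13} to the Tate SS via the natural comparison of spectral sequences induced by $(-)^{h\FF_5^\times} \to (-)^{t\FF_5^\times}$. With $\xi$ now invertible, the Leibniz rule propagates each known differential to its $\xi^{\pm k}$-multiples; for instance, $d_3(\xi^{-1}) = -\xi^{-2} d_3(\xi) = -\delta^{-1}\eta$, so $\eta$ itself becomes a boundary, killing the entire $\eta$-tower emanating from permanent cycles in the HFPSS. A similar argument using the other differentials---and their $\xi^{\pm k}$-translates---reaches every remaining generator, as depicted in Figure \ref{fig:tateSSE8}.

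The main obstacle is the systematic bookkeeping: one must enumerate the additive generators of the $\xi$-inverted $E_2$-page via the relations in Proposition \ref{prop:HFPE2} and match each against a differential originating from or terminating at it, after $\xi^{\pm k}$-translation. This verification is exactly what Figure \ref{fig:tateSSE8} records. A more conceptual alternative, hinted at in the parenthetical remark preceding the lemma, would observe that $\mM_1(5)\to\mM_0(5)$ is finite \'etale of degree $|\FF_5^\times|=4$, presenting $\TMF_1(5)$ as a faithful $\FF_5^\times$-Galois extension of $\TMF_0(5)$ in the sense of Rognes, for which the vanishing of the Tate construction is formal.
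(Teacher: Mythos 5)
Your proposal is correct and follows essentially the same route as the paper: the paper likewise identifies the Tate spectral sequence as the $\xi$-inversion of the homotopy fixed point spectral sequence, observes (via the chart in Figure~\ref{fig:tateSSE8}) that everything dies, and even makes the same parenthetical remark about the Galois-cover alternative. Your added justification that inverting $\beta$ equals inverting $\xi$ because $\delta$ is already a unit, and the sample computation $d_3(\delta\xi^{-1}) = -\eta$, are correct fleshings-out of the paper's one-line argument.
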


Thus the homotopy groups of $\pi_*\TMF_1(5)_{h\FF_5^\times} = \pi_*\TMF_0(5)$ are isomorphic to $\pi_*\TMF_1(5)^{h\FF_5^\times}$ as modules over $\pi_* \TMF$.  However, these $\pi_*\TMF$-modules are computed in an entirely different way by the homotopy orbit spectral sequence
$$ H_s(\FF_5^\times ; \pi_t \TMF_1(5)) \Rightarrow \pi_{s+t} \TMF_0(5). $$
Nevertheless, the homotopy orbit spectral sequence (with differentials) can be computed  by simply truncating the Tate spectral sequence (and manually computing $H_0$ where appropriate).  The resulting homotopy orbit spectral sequence is displayed in Figure~\ref{fig:HOSSE8}.  As with our other spectral sequences, we are displaying the $E_4$-page, with all remaining differentials.  The (infinite rank) $bo$ patterns are displayed in gray.

\begin{figure}
\includegraphics[width=\textwidth]{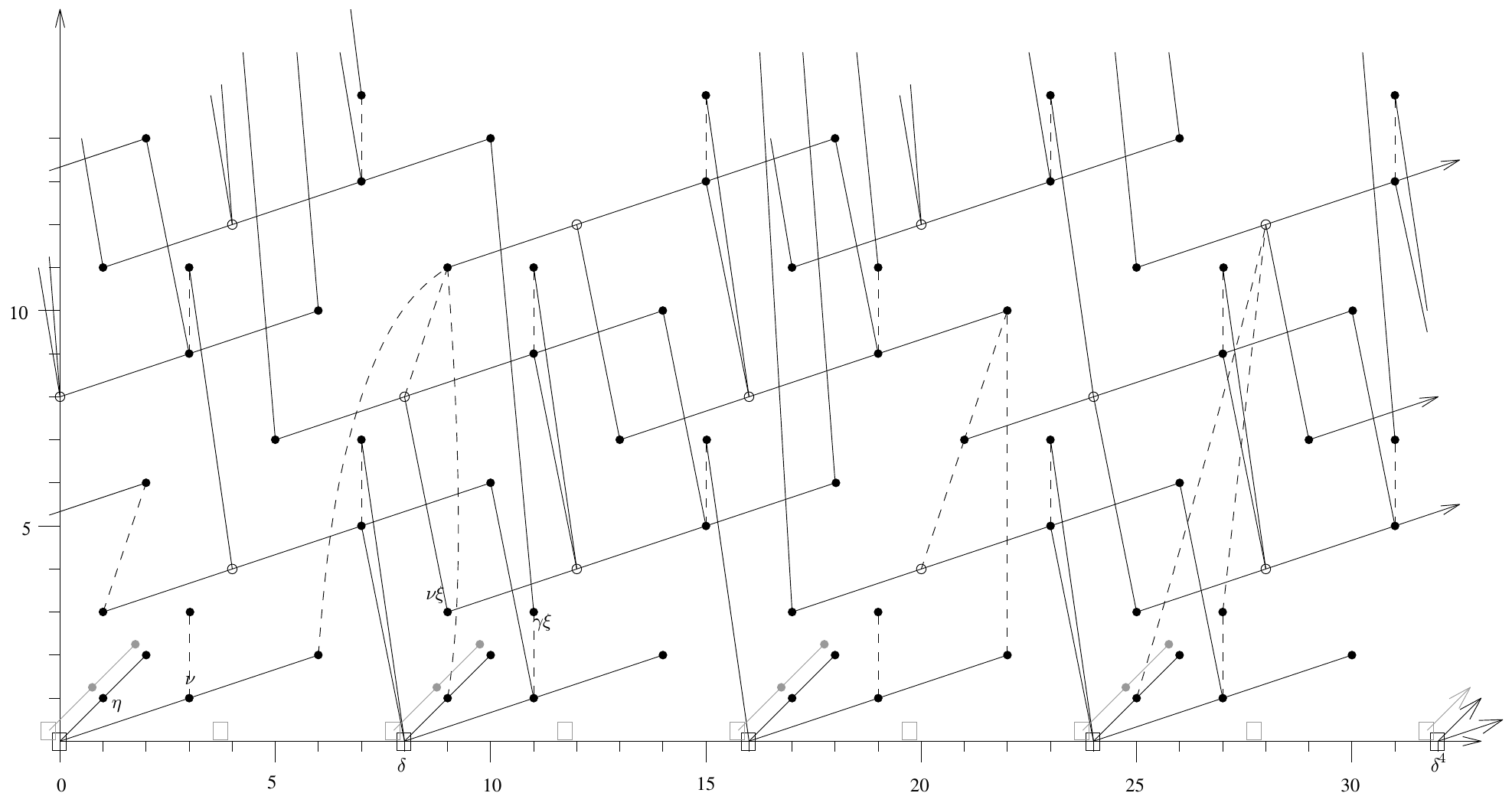}
\caption{The hidden extensions in the homotopy fixed point spectral sequence for $\TMF_0(5)$.}\label{fig:HFPSSEinfty}
\end{figure}

There are many hidden extensions (as $\pi_*\TMF$ modules) in the homotopy orbit spectral sequence (HOSS) which are not hidden in the homotopy fixed point spectral sequence (HFPSS).  Since $\pi_0 \TMF_0(5)$ is seen to be torsion free in the HFPSS, there must be additive extensions as indicated in Figure~\ref{fig:HOSSE8}, and $1 \in \pi_0 \TMF_0(5)$ must be detected on the $s = 12$ line.  Since the HFPSS shows $\eta$, $\eta^2$, and $\nu$ are nontrivial in $\pi_*\TMF_0(5)$, there must be corresponding hidden extensions in the HOSS.  Multiplying these by $\bar{\kappa}$ in the HOSS yields hidden $\eta$ and $\eta^2$ extensions supported by $\bar{\kappa}$.  

We will now deduce the hidden extensions in the HFPSS  from multiplicative structure in the HOSS.  The resulting extensions are displayed in Figure~\ref{fig:HFPSSEinfty}.

Since $\eta\bar{\kappa}$ and $\eta^2 \bar{\kappa}$ are seen to be be non-trivial in $\pi_* \TMF_0(5)$ using hidden extensions in the HOSS, we obtain corresponding new hidden extensions in the HFPSS.  With the one exception $\eta \cdot \delta^2 \gamma \xi$, all of the other hidden extensions displayed in Figure~\ref{fig:HFPSSEinfty} follow from non-hidden extensions in the HOSS.  The remaining extension is addressed in the following lemma.

\begin{lemma}
In the homotopy fixed point spectral sequence for $\TMF_0(5)$, there is a hidden extension
$$ \eta \cdot \delta^2 \gamma \xi = \delta^{-1}\xi^6. $$
\end{lemma}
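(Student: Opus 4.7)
The plan is to bracket the extension using the $E_\infty$-chart and then establish non-triviality via Moss's convergence theorem for Toda brackets, since, as already observed in the paper, this particular extension is \emph{not} witnessed by an unhidden extension in the HOSS.

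First I would verify that $\eta \cdot \delta^2 \gamma \xi$ vanishes on the $E_\infty$-page. The differential $d_3 \gamma = \delta^{-1} \eta \gamma \xi$ of Proposition~\ref{prop:d3} rearranges to $\eta \gamma \xi = \delta \cdot d_3\gamma$, exhibiting $\eta \gamma \xi$ as a $d_3$-boundary; hence $\eta \gamma \xi = 0$ in $E_4$. Since $\delta^2$ is a $d_3$-cycle (it survives all the way to $E_7$ by the proof of Proposition~\ref{prop:d7}), multiplication gives $\eta \cdot \delta^2 \gamma \xi = \delta^2 \cdot \eta \gamma \xi = 0$ in $E_4$ and \emph{a fortiori} in $E_\infty$. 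Writing $X \in \pi_{27}\TMF_0(5)$ for the homotopy class detected by $\delta^2 \gamma \xi$, we conclude that $\eta X$ lies in filtration strictly greater than $4$.

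Inspection of the $E_\infty$-chart in Figure~\ref{fig:HFPSSEinfty} in stem $28$ reveals that the unique non-zero class of filtration at least $5$ is $\delta^{-1}\xi^6$ in filtration $12$. Hence $\eta X$ is either zero or is detected by $\delta^{-1}\xi^6$. To rule out the zero case, I would apply Moss's convergence theorem: the relation $d_3\gamma = \delta^{-1}\eta\gamma\xi$ realizes $\gamma$ as a Massey product in the $E_3$-page involving $\eta$ and $\gamma \xi$, which Moss lifts to a genuine Toda bracket in $\pi_*\TMF_0(5)$ detecting a representative of $\gamma$. Multiplying through by the permanent cycle representing $\delta \xi$ (equivalently, by $\bar\kappa$ divided by $\delta^0$ in the appropriate sense) and invoking the standard juggling formulae for Toda brackets translates this into a Toda-bracket identity in stem $28$ whose leading term is forced by the filtration analysis of the previous paragraph to be exactly $\delta^{-1}\xi^6$. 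A parallel (and perhaps cleaner) route is to use $\pi_*\TMF$-module structure: verify $\bar\kappa \cdot \eta X = \bar\kappa \cdot \delta^{-1}\xi^6 = \xi^8$ in $\pi_*\TMF_0(5)$, where this identity \emph{is} visible in the HOSS, and then cancel $\bar\kappa$ using its injectivity on the $\bar\kappa$-tower in which these classes sit (as seen from the $bo$-pattern in Figure~\ref{fig:HFPSSEinfty}).

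The principal obstacle is controlling the indeterminacy of the Moss/Toda bracket (respectively, verifying $\bar\kappa$-injectivity on the relevant summand) and thereby guaranteeing that the argument produces $\delta^{-1}\xi^6$ rather than zero. The emptiness of the $E_\infty$-page in stem $28$ in filtrations $5$ through $11$ is exactly what forces the indeterminacy to vanish and pins down the extension.
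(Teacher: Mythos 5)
Your first paragraph is fine and agrees with what the paper needs: $\eta\gamma\xi = d_3(\delta\gamma)$ is a $d_3$-boundary, so $\eta\cdot\delta^2\gamma\xi$ dies in $E_4$, and the chart leaves $\delta^{-1}\xi^6$ as the only possible target in stem $28$. But everything after that --- the actual non-vanishing of $\eta X$ --- is where the proof has to live, and neither of your two routes closes the gap. The Moss/Toda route is not set up: the differential $d_3\gamma = \delta^{-1}\eta\gamma\xi$ does not ``realize $\gamma$ as a Massey product involving $\eta$ and $\gamma\xi$''; it records that $\eta$ times the class detected by $\delta^{-1}\gamma\xi$ (namely $2\nu$) vanishes. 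You never name a specific bracket $\langle a,b,c\rangle$, verify it is defined, or compute it, and your statement that the filtration analysis ``forces'' the leading term to be $\delta^{-1}\xi^6$ conflates ``if $\eta X\ne 0$ then it is detected by $\delta^{-1}\xi^6$'' (which you proved) with ``$\eta X \ne 0$'' (which is the whole point). The $\bar{\kappa}$-cancellation route fails outright: by Proposition~\ref{prop:d13}, $d_{13}(\delta\nu\xi) = \delta^{-4}\xi^8$, and multiplying by the permanent cycle $\delta^4$ shows $\xi^8$ is a $d_{13}$-boundary. So $\xi^8 = 0$ in $E_\infty$, the identity ``$\bar{\kappa}\cdot\delta^{-1}\xi^6 = \xi^8$'' detects nothing, and there is no $\bar{\kappa}$-tower on which to invoke injectivity (indeed $\bar{\kappa}^6 = 0$, so $\bar{\kappa}$-multiplication is far from injective).

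The paper closes the gap by arguing from the target rather than the source, using the unital ring map $\mathrm{Res}:\pi_*\TMF\to\pi_*\TMF_0(5)$. Since $\nu^3\ne 0$ in $\pi_*\TMF_0(5)$ and $\nu^3 = \eta\epsilon$ in $\pi_*\TMF$, the class $\epsilon$ must restrict nontrivially, and the only class available to detect it is $\delta^{-2}\xi^4$; hence $\bar{\kappa}\epsilon$ is detected by $\delta\xi^2\cdot\delta^{-2}\xi^4 = \delta^{-1}\xi^6$. Now $\bar{\kappa}\epsilon$ is $\eta$-divisible in $\pi_*\TMF$, hence also in $\pi_*\TMF_0(5)$, and the only class that can support an $\eta$-multiplication landing in $\delta^{-1}\xi^6$ is $\delta^2\gamma\xi$. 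If you want to salvage your own outline, this is the input you are missing: some externally known relation in $\pi_*\TMF$ (here, the $\eta$-divisibility of $\bar{\kappa}\epsilon$) that forces the class detected by $\delta^{-1}\xi^6$ to be in the image of multiplication by $\eta$.
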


\begin{proof}
Observe that since $\nu^3$ is non-trivial in $\pi_*\TMF_0(5)$, and in $\pi_*\TMF$ we have $\nu^3 = \eta \epsilon$, it must follow that $\epsilon$ is detected by $\delta^{-2} \xi^4$ in the HFPSS.  Thus $\bar{\kappa} \epsilon$ is detected by $\delta^{-1} \xi^6$.  However, $\bar{\kappa} \epsilon$ is $\eta$-divisible in $\pi_*\TMF$.  It follows that it must also be $\eta$-divisible in $\pi_*\TMF_0(5)$, and the hidden extension claimed is the only possibility to make this happen.
\end{proof}

\begin{thm}\label{thm:TMF05}
The homotopy groups $\pi_*\TMF_0(5)$ are given by the following $\delta^4$-periodic pattern.

\includegraphics[width = \textwidth]{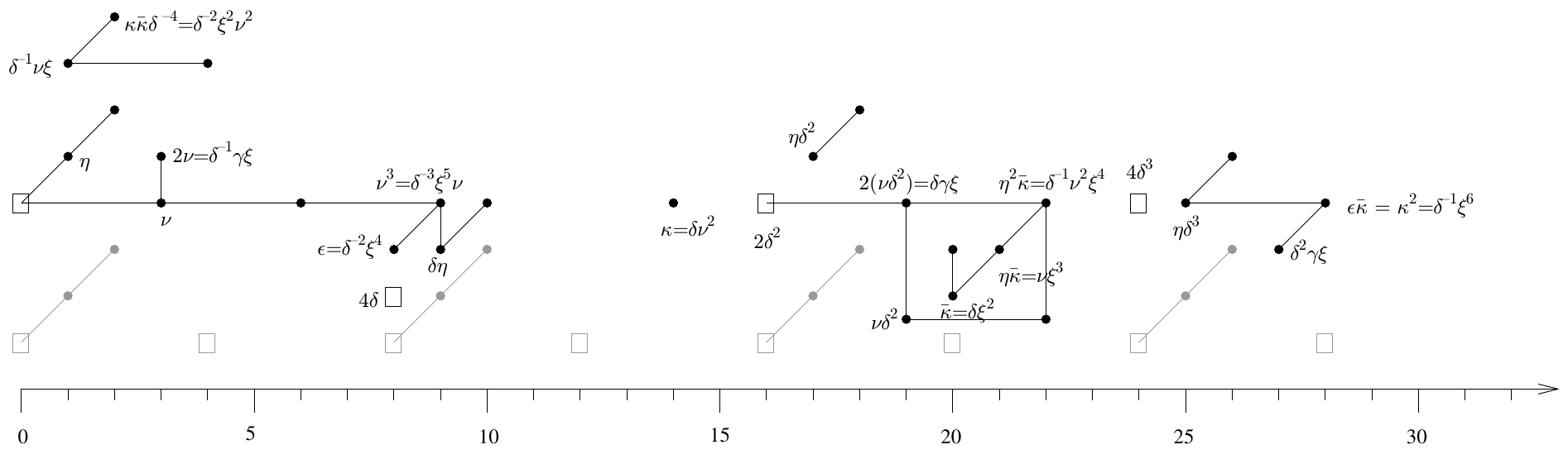}

Here, the grey classes in the figure above represent infinite direct sums of $bo$-patterns, generated ($(2)$-locally) by classes:
\begin{align*}
\delta^j b_2^{2k} \eta^a (\tilde{j}-11)^{-l}, \quad & j \in \ZZ, \: k > 0, \: 0 \le a \le 2, \: l \ge 0, \\
\delta^j b_2^{2k} b_4 \eta^a, \quad & j \in \ZZ, \: k \ge 0, \: 0 \le a \le 2, \\
2\delta^j b_2^{2k+1} (\tilde{j}-11)^{-l}, \quad & j \in \ZZ, \: k \ge 0, \: l \ge 0, \\
2\delta^j b_2^{2k+1} b_4, \quad & j \in \ZZ, \: k \ge 0, \: l \ge 0. \\
\end{align*}
\end{thm}

\begin{rmk}
One easily sees from the calculation of the $d_5$ and $d_7$-differentials that $\tilde{j}$ and hence $(\tilde{j}-11)^{-1}$ are permanent cycles in the homotopy fixed point spectral sequence.    It is reasonable to record how these act on the $\pi_*\TMF_0(5)$.  Amongst the classes of the form
$$ 2^i\delta^j b_2^k b_4^\epsilon \eta^a (\tilde{j}-11)^{-l} $$
(where we take $\epsilon \in \{0,1\}$, and  $l = 0$ if $\epsilon = 1$),
multiplication by $\tilde{j}$ is easy to compute using $\tilde{j} = b_4 \delta^{-1}$ and the relation
$$ b_4^2 = b_2^2\delta - 4\delta^2. $$
Multiplication by $(\tilde{j}-11)^{-1}$ is governed by the relation
$$ 11(\tilde{j} -11)^{-1} b_4 = b_2^2(\tilde{j}-11)^{-1} - 4\delta(\tilde{j}-11)^{-1}-b_4. $$
Amongst all other classes $x$ in the chart not of the form above, we have 
\begin{align*}
\tilde{j}x & = 0 \\
(\tilde{j}-11)^{-1} x & = x.
\end{align*}
\end{rmk}

\begin{rmk}
The relation $\epsilon \bar{\kappa} = \kappa^2$ in the chart of Theorem~\ref{thm:TMF05} corresponds to the same relation in the stable homotopy groups of spheres.  This relation represents a hidden $\epsilon$-extension in the classical Adams spectral sequence for the sphere (in the ASS, $c_0g = 0$ and $d_0^2$ detects the generator of $\pi^s_{28}$).  In the homotopy fixed point spectral sequence above, the relation 
$$ (\delta^{-2}\xi^4)(\delta \xi^2) = \delta^{-1} \xi^6 $$
implies that $\delta^{-1} \xi^6$ detects $\epsilon\bar{\kappa}$.  Actually, this gives an amusing alternative proof of the relation $\epsilon\bar{\kappa} = \kappa^2$ in $\pi_*^s$: the fact that $d_0^2$ is a permanent cycle in the ASS implies that $\kappa^2$ is non-trivial, and we have just seen that $\epsilon \bar{\kappa}$ must be non-trivial, because it is detected in the Hurewicz image of $\TMF_0(5)$.  Since $\pi^s_{28} = \ZZ/2$, the two classes must be equal.  One could make a similar argument using $\TMF$ instead of $\TMF_0(5)$, as one sees $\epsilon \bar{\kappa}$ in a similar way as a non-hidden extension in the ANSS for $\TMF$.
\end{rmk}

\section{$Q(\ell)$-spectra}
\label{sec:Ql}

We now begin working with the $Q(\ell)$ spectra in earnest.  We review
the definition of $Q(\ell)$ in \ref{subsec:Qdef} and in
\ref{subsec:double} recall the double complex that computes the
$E_2$-term of its Adams-Novikov spectral sequence.

In previous sections we have focused on data for $Q(5)$ but in
\ref{subsec:Q3} we review formulas of Mahowald and Rezk from
\cite{MahowaldRezk} related to $Q(3)$.  Finally in \ref{subsec:Q5} we
recall the formuals of Section \ref{sec:level5} in forms that will be useful
in subsequent calculations.

\subsection{Definitions}
\label{subsec:Qdef}

In \cite{Behrens}, the $p$-local spectrum $Q(\ell)$ ($p \nmid \ell$) is defined as the totalization of  an explicit semi-cosimplicial $E_\infty$-ring spectrum of the form
$$ Q(\ell)^\bullet = \bigg( \TMF \Rightarrow \TMF_0(\ell) \times \TMF \Rrightarrow \TMF_0(\ell) \bigg). $$
Here a semi-cosimplicial object is the same thing as a cosimplicial
object, but without codegeneracy maps.  The above expression is
shorthand for a semi-cosimplicial spectrum $Q(\ell)^\bullet$ in which
$Q(\ell)^k = *$ for $k>2$.  The coface maps from level $0$ to level $1$ are given by:
\begin{align*}
d_0 & = q^* \times \psi^\ell, \\
d_1 & = f^* \times 1,
\end{align*}
and the coface maps from level $1$ to level $2$ are given by
\begin{align*}
d_0 & = t^* \circ \pi_2, \\
d_1 & = f^* \circ \pi_1, \\
d_2 & = \pi_2, 
\end{align*}
where $\pi_i$ are the projections onto the components.
These maps are induced  by the maps of stacks
\begin{align*}
\psi^\ell: \mM^1 \rightarrow \mM^1, \quad & (C,\vec{v}) \mapsto (C, \ell \cdot \vec{v}), \\
f: \mM^1_0(\ell) \rightarrow \mM^1, \quad & (C,H, \vec{v}) \mapsto (C, \vec{v}), \\
q: \mM^1_0(\ell) \rightarrow \mM^1,\quad & (C,H, \vec{v}) \mapsto (C/H, (\phi_H)_* \vec{v}), \\
t: \mM^1_0(\ell) \rightarrow \mM^1_0(\ell), \quad & (C,H, \vec{v}) \mapsto (C/H, \widehat{H}, (\phi_H)_* \vec{v}),
\end{align*}
where $\phi_H: (C,H) \rightarrow C/H$ is the quotient isogeny.  (Note
that our $t$ is $\psi_d$ in \cite[p.349]{Behrens}, and we have
corrected a small typo in its presentation here.)  The map $\psi^\ell: MF_k \rightarrow MF_k$ is analogous to an Adams operation, and acts by multiplication by $\ell^k$.  Formulas for $f^*$, $q^*$, and $t^*$, on the level of modular forms are typically computed differently for different choices of $\ell$, and are more complicated.

\subsection{The double complex}
\label{subsec:double}
As done in the special case of $\ell = 2$ and $p = 3$ in \cite{Behrens}, one can form a total cochain complex to compute the $E_2$-term for the Adams-Novikov spectral sequence for $Q(\ell)$.  Let $(A, \Gamma)$ denote the usual elliptic curve Hopf algebroid, and let $(B^1(\ell), \Lambda^1(\ell))$ denote a Hopf algebroid which stackifies to give $\mM^1_0(\ell)$.  Let $C^*_{\Gamma}(A)$, $C^*_{\Lambda^1(\ell)}(B^1)$ denote the corresponding cobar complexes, so the corresponding Adams-Novikov spectral sequences take the form
\begin{gather*}
E_2^{s,2t} = H^s(\mM, \omega^{\otimes t}) = H^{s}(C^*_{\Gamma}(A)_{2t}) \Rightarrow \pi_{2t-s} \TMF, \\
E_2^{s,2t} = H^s(\mM_0(\ell), \omega^{\otimes t}) = H^{s}(C^*_{\Lambda^1(\ell)}(B^1(\ell))_{2t}) \Rightarrow \pi_{2t-s} \TMF_0(\ell). \\
\end{gather*}
Corresponding to the cosimplicial decomposition of $Q(\ell)$ we can form a double complex
$C^{*,*}(Q(\ell))$
\begin{equation}
\xymatrix{
\vdots & \vdots & \vdots \\
C^1_\Gamma(A) \ar[u] \ar[r] & C^1_{\Lambda^1(\ell)}(B^1(\ell)) \oplus C^1_\Gamma(A) \ar[r] \ar[u] & 
C^1_{\Lambda^1(\ell)}(B^1(\ell)) \ar[u] \ar[r] & \cdots \\
C^0_\Gamma(A) \ar[u] \ar[r] & C^0_{\Lambda^1(\ell)}(B^1(\ell)) \oplus C^0_\Gamma(A) \ar[r] \ar[u] & 
C^0_{\Lambda^1(\ell)}(B^1(\ell)) \ar[u] \ar[r] & \cdots
}
\end{equation}
Let $C^*_{tot}(Q(\ell))$ denote the corresponding total complex.  Then the Adams-Novikov spectral sequence for $Q(\ell)$ takes the form
$$ E_2^{s,2t} = H^s(C^*_{tot}(Q(\ell))_{2t}) \Rightarrow \pi_{2t-s} Q(\ell). $$

\subsection{Recollections about $Q(3)$}
\label{subsec:Q3}

Mahowald and Rezk \cite{MahowaldRezk} performed a study of the explicit formulas for $Q(3)$ similar to our current treatment of $Q(5)$.  We summarize some of their results here for the reader's convenience.

The moduli space $\mM^1_1(3)$ is represented by the affine scheme  $\Spec( B^1(3) )$ with
$$ B^1(3) = \ZZ[1/3, a_1, a_3, \Delta^{-1}] $$
with
$$ \Delta = a_3^3(a_1^3 - 27 a_3). $$
The corresponding universal $\Gamma_1(3)$ structure is carried by the Weierstrass curve
$$ y^2 + a_1 xy + a_3 y = x^3 $$
with point $P = (0,0)$ of order $3$.
The $\GG_m$-action on $\mM^1_1(3)$ induces a grading on $B^1(3)$, for which $a_i$ has weight $i$.  It follows that 
$$ \pi_* \TMF_1(3) = \ZZ[1/3, a_1, a_3, \Delta^{-1}] $$
with topological degrees $|a_i| = 2i$.  The spectrum $\TMF_1(3)$ admits a complex orientation with $v_1 = a_1$ and $v_2 = a_3$.

The group $\FF_3^\times = \{ \pm 1\}$ acts on $\mM_1^1(5)$ by sending an $R$-point $(C,P)$ (where $P$ is a point of exact order $3$ on $C$) to the $R$-point $(C,[-1](P))$.  This induced action of $\FF_3^\times$ on the ring 
$B^1(3)$ is given by 
\begin{align*}
[-1](a_1) & = -a_1, \\
[-1](a_3) & = -a_3.
\end{align*}
We have
$$ \mM^1_0(3) = \mM^1_1(3)// \FF_3^\times $$
and hence an equivalence
$$ \TMF_0(3) \simeq  \TMF_1(3)^{h\FF_3^\times}. $$
The resulting homotopy fixed point spectral sequence takes the form
$$ H^s(\FF_3^\times ; \pi_t \TMF_1(3)) \Rightarrow \pi_{t-s}\TMF_0(3). $$
In particular, the ring of modular forms (meromorphic at the cusps) for $\Gamma_0(3)$ is the subring 
$$ MF(\Gamma_0(3)) = H^0(\FF_3^\times ; MF(\Gamma_1(3)) = \ZZ[1/3, a_1^2, a_1 a_3, a_3^2, \Delta^{-1}] \subset B^1(3). $$

Mahowald and Rezk also compute the effects of the maps
\begin{gather*}
q^*, f^*: A \rightarrow B^1(3), \\
t^*: B^1(3) \rightarrow B^1(3)
\end{gather*}
as 
\begin{align*}
f^*(a_1) & = a_1, & q^*(a_1) & = a_1, \\
f^*(a_2) & = 0, & q^*(a_2) & = 0, \\
f^*(a_3) & = a_3, & q^*(a_3) & = 3a_3, \\
f^*(a_4) & = 0, & q^*(a_4) & = -6a_1 a_3, \\
f^*(a_6) & = 0, & q^*(a_6) & = -(9a_3^2+a_1^3 a_3), 
\end{align*}
\begin{align*}
t^* (a_1^2) & = -3 a_1^2, \\
t^* (a_1 a_3) & = \frac{1}{3} a_1^4 - 9 a_1 a_3, \\
t^* (a_3^2) & = -\frac{1}{27} a_1^6 + 2 a_1^3 a_3 - 27 a_3^2.
\end{align*}

\subsection{The formulas for $Q(5)$}
\label{subsec:Q5}

The moduli space $\mM^1_1(5)$ is represented by the affine scheme  $\Spec( B^1(5) )$ with
$$ B^1(5) = \ZZ[1/5, a_1, u, \Delta^{-1}] $$
with
$$
\Delta = -11u^{12}+64a_1u^{11}-154a^2_1 u^{10}+195a^3_1 u^9-135a^4_1 u^8+46a^5_1u^7-4a^6_1 u^6 - a^7_1 u^5.
$$
The corresponding universal $\Gamma_1(5)$ structure is carried by the Weierstrass curve
$$ y^2 + a_1 xy + (a_1 u^2 - u^3) y = x^3 + (a_1 u - u^2) x^2 $$
with point $P = (0,0)$ of order $5$.
The $\GG_m$-action on $\mM^1_1(5)$ induces a grading on $B^1(5)$, for which $a_1$ and $u$ both have weight $1$.  It follows that 
$$ \pi_* \TMF_1(5) = \ZZ[1/5, a_1, u, \Delta^{-1}] $$
with topological degrees $|a_1| = |u| = 2$.  The spectrum $\TMF_1(5)$ admits a complex orientation with $v_1 = a_1$ and $v_2 \equiv u^3 \mod (2,v_1)$.

The group $\FF_5^\times \cong C_4$ acts on $\mM_1^1(5)$: for $5 \nmid n$, the mod $5$ reduction $[n] \in \FF_5^\times$ acts by sending an $R$-point $(C,P)$ (where $P$ is a point of exact order $5$ on $C$) to the $R$-point $(C,[n](P))$.  This induced action of the generator $[2]$ of $\FF_5^\times$ on the ring 
$B^1(5)$ is given by 
\begin{align*}
[2](a_1) & = a_1 - 2u, \\
[2](u) & = a_1 - u.
\end{align*}
These have the more convenient expression
\begin{align*}
[2](u) & = b_1, \\
[2](b_1) & = - u
\end{align*}
where $b_1 := a_1 - u$. 
We have
$$ \mM^1_0(5) = \mM^1_1(5)// \FF_5^\times $$
and hence an equivalence
$$ \TMF_0(5) \simeq  \TMF_1(5)^{h\FF_5^\times}. $$
The resulting homotopy fixed point spectral sequence takes the form
$$ H^s(\FF_5^\times ; \pi_t \TMF_1(5)) \Rightarrow \pi_{t-s}\TMF_0(5). $$
In particular, the ring of modular forms (meromorphic at the cusps) for $\Gamma_0(5)$ is the subring 
$$ MF(\Gamma_0(5)) = H^0(\FF_5^\times ; MF(\Gamma_1(5)) = \frac{\ZZ[1/5, b_2, b_4, \delta][\Delta^{-1}]}{(b_4^2 = b_2^2 \delta-4\delta^{2})} \subset B^1(5) $$
where
\begin{align*}
b_2 & := u^2 + b_1^2, \\
b_4 & := u^3 b_1 - u b_1^3, \\
\delta & := u^2 b_1^2.
\end{align*}
Note that $\delta$ is almost a cube root of $\Delta$; we have
$$ \Delta = \delta^{2} b_4 - 11\delta^3. $$

The effect of the maps
\begin{gather*}
q^*, f^*: A \rightarrow B^1(5), \\
t^*: B^1(5) \rightarrow B^1(5)
\end{gather*}
is
\begin{align*}
f^*(a_1) & = a_1, & q^*(a_1) & = a_1, \\
f^*(a_2) & = a_1 u - u^2, & q^*(a_2) & = - u^2 + a_1 u, \\
f^*(a_3) & = a_1 u^2 - u^3, & q^*(a_3) & = - u^3 + a_1 u^2 , \\
f^*(a_4) & = 0, & q^*(a_4) & = -10 u^4 + 30 a_1 u^3 - 25 a_1^2 u^2 + 5 a_1^3 u, \\
f^*(a_6) & = 0, & q^*(a_6) & = -20u^6 + 59a_1 u^5 - 70a_1^2 u^4 + 45a_1^3 u^3 - 15a_1^4 u^2 + a_1^5 u, 
\end{align*}
\begin{align*}
t^*(a_1) & = \frac{1}{5}(-8 \zeta^3 - 6\zeta^2 - 14 \zeta - 7) a_1 + \frac{1}{5}(14 \zeta^3 - 2\zeta^2 + 12\zeta + 6)u, \\
t^*(u) & = \frac{1}{5}(-\zeta^3 - 7\zeta^2 - 8 \zeta - 4)a_1 + \frac{1}{5}(8 \zeta^3 + 6 \zeta^2 + 14 \zeta + 7)u.
\end{align*}
In the formulas for $t^*$, we use $\zeta$ to denote a $5$th root of
unity.  This results in the following formula for $f^*,q^*,t^*$ on
rings of modular forms:
\begin{align*}
f^*(c_4) &= b_2^2-12b_4+12\delta, & q^*(c_4) &= b_2^2+228b_4+492\delta,\\
f^*(c_6) &= -b_2^3+18b_2b_4-72b_2\delta, & q^*(c_6) &= -b_2^3+522b_2b_4+10,008b_2\delta,
\end{align*}
\begin{align*}
t^*(b_2) & = -5b_2, \\
t^*(b_4) & = \frac{1}{5}(11 b_2^2 - 117 b_4 - 88 \delta), \\
t^*(\delta) & = \frac{1}{5}(b_2^2 - 22 b_4 + 117 \delta).
\end{align*}

\section{Detection of the $\beta$-family by $Q(3)$ and $Q(5)$}
\label{sec:beta}

The Miller-Ravenel-Wilson divided $\beta$-family \cite{MillerRavenelWilson} is an important algebraic
approximation of the $K(2)$-local sphere at the prime 2.  It was
computed for the prime 2 by Shimomura in \cite{Shimomura}.  Here we use
the standard chain of Bockstein spectral sequences and the formulas of
\ref{subsec:Q3} and \ref{subsec:Q5} to compute
algebraic chromatic data in the $Q(3)$ and $Q(5)$ spectra.  These
are compared to Shimomura's calculations, resulting in Theorems
\ref{thm:Q3beta} and \ref{thm:Q5beta}.  The surprising observation is that
$Q(3)$ precisely detects the divided $\beta$-family, while the analgous family
in $Q(5)$ has extra $v_1$-divisibility.

\subsection{The chromatic spectral sequence}
\label{subsec:chrom}

Following \cite{MillerRavenelWilson}, given a $BP_*$-module $N$, we will let 
$$ M_i^{n-i} N := N/(p, \cdots, v_{i-1}, v_i^\infty,  \cdots, v^\infty_{n-1}) [v^{-1}_{n}]. $$  
If $N$ is a $BP_*BP$-comodule, then so is $M_i^{n-i} N$.  Letting $\Ext^{*,*}(N)$ denote the groups
$$ \Ext^{*,*}_{BP_*BP}(BP_*, N), $$
there is a chromatic spectral sequence
$$ E_1^{n, s,t} = \Ext^{s,t}(M_0^{n} N) \Rightarrow \Ext^{s+n,t}(N).  $$
The groups $\Ext^{0,*}(M_0^n BP_*)$ detect the $n$th Greek letter elements in $\Ext^{*,*}(BP_*)$.

The $E_1$-term of this spectral sequence may be computed by first computing the groups $\Ext^{*,*}(M_n^0)$ and then using the $v_i$-Bockstein spectral sequences (BSS) of the form
$$ \Ext^{*,*}(M_{i+1}^{n-i-1} N) \otimes \FF_p[v_i]/(v_i^\infty) \Rightarrow \Ext^{*,*}(M_i^{n-i} N). $$

\subsection{Statement of results}

For the remainder of this section we work exclusively at the prime $2$.
Shimomura used these spectral sequences to make the following computation.

\begin{thm}[\cite{Shimomura}]
\label{thm:Shim}
The groups $\Ext^0(M_0^2 BP_*)$ are spanned by the elements: 
\begin{align*}
& \frac{1}{2^k v_1^j},  &  & j \ge 1 \: \text{and} \:  k \le k(j); \\
\\  
& \frac{v_2^{m2^n}}{2^k v_1^j}, & & 2 \nmid m, \: k \le k(j), \: \text{and}\\
& & &  j  \le \begin{cases}
a(1), & k = 3, n = 2, \\
a(n-k+1), & \text{otherwise},
\end{cases}
\end{align*}
where
$$
k(j) := \begin{cases}
1,  & j \not \equiv 0 \mod 2, \\
\nu_2(j)+2, & j \equiv 0 \mod 2,
\end{cases}
$$
and
$$
a(i) := 
\begin{cases}
1, & i = 0, \\
2, & i = 1, \\
3\cdot 2^{i-1}, & i \ge 2. 
\end{cases}
$$
\end{thm}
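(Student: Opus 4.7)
The plan is to follow Shimomura's original strategy, climbing through the chain of Bockstein spectral sequences set up in Section \ref{subsec:chrom}, starting from $\Ext^{0,*}(M_2^0 BP_*)$ and working back through $M_1^1 BP_*$ to $M_0^2 BP_*$. At each stage one takes the previous $\Ext^0$, tensors with the appropriate divided-power algebra, and then determines the differentials via explicit cobar computations using the coaction formulas for $v_1$ and $v_2$.

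First, recall from the classical height-$2$ computation (Ravenel) that $\Ext^{0,*}(M_2^0 BP_*) = \FF_2[v_2^{\pm 1}]$, so only powers of $v_2$ are invariant modulo $(2,v_1)$. Second, run the $v_1$-Bockstein spectral sequence
\[
  \Ext^{0,*}(M_2^0 BP_*) \otimes \FF_2[v_1]/v_1^\infty \Rightarrow \Ext^{0,*}(M_1^1 BP_*).
\]
The key input is the expansion $\eta_R(v_2) \equiv v_2 + v_1 t_1^2 - v_1^2 t_1 \pmod{(2, v_1^3)}$ together with its higher-order corrections; iterating Frobenius yields that $v_2^{2^n}$ is invariant modulo $v_1^{a(n)+1}$ but not modulo $v_1^{a(n)+2}$. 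This produces a cascade of differentials of the shape
\[
  d\!\left(\tfrac{v_2^{m 2^n}}{v_1^{a(n)+1}}\right) \doteq m \cdot \tfrac{v_2^{(m-1)2^n}}{v_1} \cdot (\text{higher}),\qquad m \text{ odd},
\]
which exactly cuts the $v_1$-divisibility of $v_2^{m2^n}$ to $j \leq a(n)$.

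Third, run the $v_0$-Bockstein spectral sequence
\[
  \Ext^{0,*}(M_1^1 BP_*) \otimes \FF_2[v_0]/v_0^\infty \Rightarrow \Ext^{0,*}(M_0^2 BP_*)
\]
using $\eta_R(v_1) \equiv v_1 + 2 t_1 - 2 v_1 t_1^2 + \cdots$. The cobar computation shows that $1/v_1^j$ is $2^{k(j)}$-divisible but not $2^{k(j)+1}$-divisible, yielding the function $k(j)$ via $2$-adic valuations in the denominator. Combining the two Bocksteins, a class of the form $v_2^{m2^n}/(2^k v_1^j)$ survives iff $k \leq k(j)$ and the $v_1$-range permitted by the first Bockstein is further shrunk from $a(n)$ to $a(n-k+1)$ by the surviving mod-$2$ correction terms (which pick up an extra $v_1$ factor for each power of $2$ one introduces in the denominator). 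The exceptional constraint $j \leq a(1)$ when $k = 3, n = 2$ comes from a single additional $d$-differential visible only at this bidegree, where the mod-$2$ correction to $\eta_R(v_2^4)$ lands in a non-generic position.

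The main obstacle is precisely this: controlling all the nontrivial interactions between the $2$- and $v_1$-Bocksteins. The iteration of $\eta_R(v_2)$ must be pushed to high enough order to detect the $a(n)$ thresholds for every $n$, and the cross-term for $v_2^4$ must be pinned down separately to account for the $(k,n) = (3,2)$ exception. Once these are established, verifying that the listed classes are all of $\Ext^0$ amounts to checking that every remaining chromatic fraction either supports one of the computed differentials or is already a boundary.
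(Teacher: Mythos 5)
The paper does not actually prove Theorem~\ref{thm:Shim}: it is quoted from Shimomura, and the paper's own work is the analogous computation for $Q(3)$ (Theorem~\ref{thm:Q3beta2}), carried out by exactly the three-step Bockstein procedure of Section~\ref{subsec:technique} that you outline. So your architecture --- start from $\Ext^0(M_2^0 BP_*)=\FF_2[v_2^{\pm 1}]$, run the $v_1$-Bockstein to $M_1^1$, then the $2$-Bockstein to $M_0^2$ --- is the right one. The gap is that your sketch elides precisely the step where all of the work lives. The claim that ``iterating Frobenius yields that $v_2^{2^n}$ is invariant modulo $v_1^{a(n)+1}$'' is false (and is also off by one: to get $j\le a(n)$ you need invariance mod $v_1^{a(n)}$ failing at $v_1^{a(n)+1}$). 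Squaring $d(v_2)\equiv v_1t_1^2+v_1^2t_1 \pmod 2$ repeatedly only gives $d(v_2^{2^n})\equiv v_1^{2^n}t_1^{2^{n+1}}+\cdots$, i.e.\ invariance mod $v_1^{2^n}$, whereas $a(n)=3\cdot 2^{n-1}>2^n$ for $n\ge 2$. Reaching $a(n)$ requires replacing $v_2^{2^n}$ by carefully chosen representatives $x_n=v_2^{2^n}+(\text{lower-order terms})$ whose coboundaries are strictly more $v_1$-divisible, and then controlling all the lower-order terms of $d(x_n)$ through both Bocksteins; this is the entire content of Shimomura's argument (compare the elements $x_0,x_1,x_2$ and Remark~\ref{rmk:kill} in the $Q(3)$ computation, where $\Delta$ plays the role of the corrected $v_2^4$). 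The phrase ``together with its higher-order corrections'' hides the step that both produces the numerology $a(n)$ and is the one that could fail.

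You also have the exceptional case $(k,n)=(3,2)$ backwards. Since $a(1)=2>1=a(n-k+1)$, the exception grants \emph{extra} $v_1$-divisibility beyond the generic bound $a(n-k+1)$; it is established by exhibiting an additional correction term that turns the apparent obstruction into a coboundary (the $Q(3)$ analogue is (\ref{eq:Q3beta1}), proved by adding the term involving $(a_4+a_2^2)^2$), not by ``a single additional $d$-differential'' imposing a constraint. As written, your mechanism would shrink the allowed range at that bidegree rather than enlarge it, so the sketch as stated does not reproduce Shimomura's answer. The remaining assertions (the $k(j)$ bound from the $2$-Bockstein, the drop from $a(n)$ to $a(n-k+1)$ as $k$ grows, and the completeness of the list) are stated rather than argued; each requires the same kind of explicit leading-term bookkeeping that occupies the proofs of (\ref{eq:Q3beta1})--(\ref{eq:Q3beta6}) in the paper.
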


The `names' $v_2^i/2^kv_1^j$ are not the exact names of $BP_*BP$-primitives in $M_0^2 BP_*$, but rather the names of the elements detecting them in the sequence of BSS's:
$$ \Ext^{*,*}(M_2^0 BP_*) \otimes \frac{\FF_2[v_0, v_1]}{(v_0^\infty, v_1^\infty)} \Rightarrow \Ext^{*,*}(M_1^1 BP_*) \otimes \frac{\FF_2[v_0]}{(v_0^\infty)} \Rightarrow \Ext^{*,*}(M_0^2 BP_*). $$
Put a linear order on the monomials $v_0^k v_1^j$ in $\FF_2[v_0^k, v_1^j]$ by left lexicographical ordering on the sequence of exponents $(k,j)$.
With respect to this ordering, the actual primitives correspond to elements  
$$ \frac{v_2^{i}}{2^k v_1^j} + \text{terms with smaller denominators}. $$

The main theorem of this section is the following.

\begin{thm}\label{thm:Q3beta}
The map
$$ \Ext^0(M_0^2 BP_*) \rightarrow H^0(M_0^2 C^*_{tot}(Q(3))) $$
is an isomorphism.
\end{thm}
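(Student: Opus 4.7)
The plan is to compute $H^{0,*}(M_0^2 C^*_{tot}(Q(3)))$ directly and check that it matches Shimomura's $\Ext^{0,*}(M_0^2 BP_*)$ from Theorem~\ref{thm:Shim} generator by generator. Because the map in the statement is induced by naturality from the unit $S \to Q(3)$ (which gives a map of Adams--Novikov spectral sequences and hence of their chromatic filtrations), a bijection of generators will force the arrow to be the asserted isomorphism.

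To carry out the computation I would run the chain of Bockstein spectral sequences of \S\ref{subsec:chrom} on the double complex $C^{*,*}(Q(3))$, in three stages. First, compute $H^{*,*}(M_2^0 C^*_{tot}(Q(3)))$: after killing $(2,v_1)$ and inverting $v_2$, each of the two vertical cobar complexes reduces to a supersingular-locus computation for $\TMF$ or $\TMF_0(3)$, and the horizontal cosimplicial differentials are encoded by the explicit Mahowald--Rezk formulas for $f^*, q^*, \psi^3, t^*$ recalled in \S\ref{subsec:Q3}. This parallels Behrens' $p=3, \ell=2$ computation in \cite{Behrens}. Second, run the $v_1$-Bockstein to reach $H^{*,*}(M_1^1 C^*_{tot}(Q(3)))$: for each putative class $v_2^{m2^n}/v_1^j$ compute $d_1$ using the identifications $v_1 \equiv a_1$ and $v_2 \equiv a_3$ mod $(2,v_1)$, and record the maximal $j$ for which the class survives. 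Third, run the $v_0 = 2$ Bockstein to reach $H^{0,*}(M_0^2 C^*_{tot}(Q(3)))$ and record the maximal $2$-divisibility of each surviving element.

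The main obstacle is the $v_1$-Bockstein: one must show that the combined effect of the internal Weierstrass cobar differentials and the external cosimplicial maps (in particular the Atkin--Lehner contribution $t^*$) reproduces exactly Shimomura's function $a(i)$, with no extra $v_1$-divisibility arising from the $\TMF_0(3)$ summand of $Q(3)^1$. The analogous step for $Q(5)$ carried out later in this section does produce extra $v_1$-divisibility (Theorem~\ref{thm:Q5beta}, Corollary~\ref{cor:Q5beta}), so the match with Shimomura is a genuine numerical coincidence specific to $\ell = 3$. The natural strategy is to induct on the exponent $n$, using the leading terms of $q^* - f^*$ to pin down the $v_1$-divisibility of $v_2^{2^n}$ and comparing with Mahowald--Rezk's low-dimensional $Q(3)$ data as the base case. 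A parallel but easier induction on the $2$-adic valuation of $j$, exploiting the explicit form of $t^*$ on $a_1^2, a_1a_3, a_3^2$, then pins down the function $k(j)$ at the $v_0$-Bockstein stage. Once every family in Theorem~\ref{thm:Shim} is exhibited as a distinct nonzero class in $H^{0,*}(M_0^2 C^*_{tot}(Q(3)))$ and no extra classes appear, naturality of the comparison map closes the argument.
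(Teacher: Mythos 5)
Your plan is essentially the paper's own strategy: compute $H^{0,*}(M_0^2 C^*_{tot}(Q(3)))$ through the chain of Bockstein spectral sequences ($M_2^0$, then the $v_1$-Bockstein to $M_1^1$, then the $2$-Bockstein to $M_0^2$), with the $v_1$-Bockstein differentials driven by the Mahowald--Rezk formulas for $q^*-f^*$ and the doubling/Leibniz identities, and then match generators against Theorem~\ref{thm:Shim}. One concrete correction: the bound $k \le k(t)$ on $2$-divisibility does \emph{not} come from the Atkin--Lehner map $t^*$. Since $H^0$ is the kernel of $D_{tot} = (\eta_R - \eta_L) \oplus (q^*-f^*) \oplus (\psi^3 - 1)$ out of $C^0_{tot}$, the map $t^*$ (which only appears in the coface maps from level $1$ to level $2$) plays no role at all in this computation; the upper bound on $k$ is forced by the $\psi^3 - 1$ component, which in degree $2t$ is multiplication by $3^t - 1$ up to a unit, whence $k \le \nu_2(3^t-1) = k(t)$ (this is Lemma~\ref{lem:psilimit} in the paper). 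The remaining and much harder half of the argument, which your plan gestures at but does not carry out, is exhibiting the elements with the claimed exact divisibilities: the paper does this with explicit correction terms ($x_0 = a_3 + a_1a_2$, $x_1 = x_0^2 + a_1^2a_4 + a_1^2a_2^2$, $x_2 = \Delta$, and further corrections such as $a_3^{4m-3}(a_4+a_2^2)^2/2v_1$), and certifies maximality by showing the connecting homomorphism $\partial$ lands outside $D(M_1^1 A)$, which is read off from the $v_1$-Bockstein differentials of Lemma~\ref{lem:v1BSS}.
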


\begin{rmk}
It was observed by Mahowald and Rezk \cite{MahowaldRezk} that the map
$$ \Ext^0(M_1^1 BP_*) \rightarrow H^0(M_1^1 C^*_{tot}(Q(3))) $$
is an isomorphism.
\end{rmk}

However, the same cannot hold for $Q(5)$.  Indeed, the following theorem implies it does not even hold on the level of $M_1^1$.

\begin{thm}\label{thm:Q5beta}
The map
$$ \Ext^0(M_1^1 BP_*) \rightarrow H^0(M_1^1 C^*_{tot}(Q(5))) $$
is \emph{not} an isomorphism.
\end{thm}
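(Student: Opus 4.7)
The plan is to construct an explicit witness to the non-isomorphism: an element of $H^0(M_1^1 C^*_{tot}(Q(5)))$ whose $v_1$-divisibility exceeds anything permitted in $\Ext^0(M_1^1 BP_*)$ (where the latter is controlled by the Miller--Ravenel--Wilson computation, the precursor of Shimomura's Theorem~\ref{thm:Shim}). The main inputs are the explicit formulas for $f^*$, $q^*$, and $\psi^5$ recorded in \ref{subsec:Q5}.

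Unwinding the definition via the double complex of \ref{subsec:double}, a class in total degree zero is represented by an invariant Weierstrass modular form $x \in A$ satisfying $q^*(x) = f^*(x)$ in $MF(\Gamma_0(5))$ and $(5^t - 1)x = 0$ on weight $t$ forms. Passing to the $M_1^1$-localization (mod $2$, $v_1$-nilpotent, $v_2$-inverted), the key observation is that the cosimplicial differential from cosimplicial level $0$ to level $1$ vanishes identically modulo $2$: from \ref{subsec:Q5},
\[
  q^*(c_4) - f^*(c_4) = 240\, b_4 + 480\, \delta,\qquad q^*(c_6) - f^*(c_6) = 504\, b_2 b_4 + 10080\, b_2 \delta,
\]
both divisible by $8$, while $5^t - 1$ is always even.

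Consequently, at the mod-$2$ stage of the $v_1$-Bockstein spectral sequence, $H^0(M_1^1 C^*_{tot}(Q(5)))$ coincides, modulo cobar coboundaries, with $MF(\Gamma(1))/(2, v_1^{\infty})[v_2^{-1}]$, which is visibly larger than $\Ext^0(M_1^1 BP_*)$: the sphere side is constrained by a nontrivial $v_1$-Bockstein with bounded $v_1$-divisibilities on the $\beta$-names $v_2^i/v_1^j$, while no analogous constraint appears on the $Q(5)$ side once the cosimplicial differential has been neutralized mod $2$. To turn this into an honest witness, I would identify the smallest $i = 2^n$ for which the sphere's $v_1$-divisibility bound $a(n)$ is sharp (e.g.~$n=0$ or $n=1$), and exhibit $v_2^{i} / v_1^{a(n)+1}$ as a cocycle in the $Q(5)$ double complex by explicit chain-level construction at the $M_1^1$ level, using the mod-$2$ vanishing to absorb the obstruction.

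The main obstacle is the chain-level verification that the chosen witness is a genuine cohomology class (not a coboundary) after running the $v_1$-Bockstein far enough for no cancellations to appear. This amounts to a careful inspection of the $v_1$-Bockstein differentials in the $Q(5)$ double complex, organized using the technique of \ref{subsec:technique}. Once a single such witness is produced, the comparison map fails to be surjective, hence cannot be an isomorphism.
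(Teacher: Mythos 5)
Your overall strategy---exhibiting a class in $H^0(M_1^1 C^*_{tot}(Q(5)))$ with more $v_1$-divisibility than the Miller--Ravenel--Wilson bound allows---is exactly the right one, and is what the paper does (via Lemma~\ref{lem:Q5v1BSS} and Corollary~\ref{cor:Q5beta}). But the mechanism you propose for producing the witness contains a genuine error. You claim that the coface difference $q^*-f^*$ vanishes identically modulo $2$ because it is divisible by $8$ on $c_4$ and $c_6$. The ring of modular forms is \emph{not} generated over $\ZZ_{(2)}$ by $c_4$ and $c_6$: one has $\Delta = (c_4^3-c_6^2)/1728$ with $1728 = 2^6\cdot 27$, so divisibility of $q^*-f^*$ by $8$ on $c_4$ and $c_6$ propagates to no divisibility statement at all for $\Delta$. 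Indeed the computation in the paper (equation (\ref{eq:Q5Dxi})) gives
$$ (q^*-f^*)(\Delta) \equiv a_1^8 u^4 = v_1^8 u^4 \pmod{(2, v_1^9)}, $$
which is visibly nonzero modulo $2$. Consequently your intermediate assertion that $H^0(M_1^1 C^*_{tot}(Q(5)))$ coincides with all of $MF/(2,v_1^\infty)[v_2^{-1}]$ is false: the $v_1$-divisibility of $v_2^{m2^n}$ in $H^0(M_1^1 C^*_{tot}(Q(5)))$ is bounded (by $2^{n+1}$, per Corollary~\ref{cor:Q5beta}), not infinite. (Note also that $c_4\equiv v_1^4$ and $c_6\equiv v_1^6$ mod $2$, so these elements are already $v_1$-divisible multiples of $1$ in $M_1^1A$ and carry no information about the $\beta$-family range.)

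The second problem is the location of your proposed witness. You suggest taking the smallest $n$ for which the sphere's bound $a(n)$ is sharp, ``e.g.\ $n=0$ or $n=1$.'' But for $n=0$ and $n=1$ the $Q(5)$ answer \emph{agrees} with the sphere ($j\le 1$ and $j\le 2$ respectively), so no witness exists there. The first discrepancy occurs at $n=2$: the sphere permits $v_2^{4m}/v_1^j$ only for $j\le a(2)=6$, whereas in $Q(5)$ the class $v_2^{4m}/v_1^j$ exists for $j\le 2^3 = 8$. The reason is precisely the displayed congruence above: the leading term of $D(\Delta)$ is $v_1^8 u^4$ rather than the $v_1^6(a_3')^2$ one finds for $Q(3)$, so the $v_1$-Bockstein differential supported by $\Delta^{m}$ ($m$ odd) has length $8$ instead of $6$. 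To repair your argument you should replace the false mod-$2$ vanishing claim with this explicit leading-term computation of $(q^*-f^*)(\Delta)$, and take $v_2^{4}/v_1^{7}$ (say) as the witness to non-surjectivity.
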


\subsection{Leibniz and doubling formulas}

The group $H^0(M_0^2 C^*_{tot}(Q(\ell)))$ is the kernel of the map
$$ M_0^2 C^0_{tot}(Q(\ell)) \xrightarrow{d_0 - d_1} M_0^2 C^1_{tot}(Q(\ell)) $$
where $d_0$ and $d_1$ are the cosimplicial coface maps of the total complex.
Explicitly, we are applying $M_0^2$ to the map
$$ D_{tot}: A_{(2)} \xrightarrow{(\eta_R - \eta_L) \oplus (q^*-f^*) \oplus (\psi^\ell - 1)} \Gamma_{(2)} \oplus B^1(\ell)_{(2)} \oplus A_{(2)}. $$
The projection of $D_{tot}$ onto the last component is very easy to understand; it is given by
$$ \psi^\ell - 1: A \rightarrow A. $$
As long as $\ell$  generates $\ZZ_2^\times/\{\pm 1\}$, in degree $2t$ the map $\psi^\ell-1$, up to a unit in $\ZZ^\times_{(2)}$, corresponds to multiplication by a factor of $2^{k(t)}$.  It therefore suffices to understand the composite $D$ of $D_{tot}$ with the projection onto the first two components:
$$ D: A_{(2)} \xrightarrow{(\eta_R - \eta_L) \oplus (q^*-f^*)} \Gamma_{(2)} \oplus B^1(\ell)_{(2)}. $$

We shall make repeated use of the following lemma about this map $D$.

\begin{lemma}\label{lem:Dleib}
The map $D$ satisfies the following two identities.
\begin{align}
D(xy) & = D(x) \eta_R(y) + x D(y), \label{eq:Dproduct} \\
D(x^2) & = 2xD(x) + D(x)^2. \label{eq:Dsquare} 
\end{align}
Here, $\Gamma$ is given the $A$-module structure induced by the map $\eta_L$, and $B^1(3)$ is given the $A$-module structure induced from the map $f^*$.  Consequently, we have
\begin{equation}\label{eq:Dleib} 
D(xy) \equiv xD(y) \mod (D(x)).
\end{equation}
\end{lemma}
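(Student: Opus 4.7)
The plan is to verify both identities componentwise, treating the $\Gamma$-component and the $B^1(\ell)$-component of $D = (\eta_R - \eta_L) \oplus (q^* - f^*)$ separately. The notation $D(x)\eta_R(y)$ should be read as using the right unit of each Hopf algebroid in its corresponding factor: $\eta_R$ in the $\Gamma$-component and the analogous ring map $q^*$ in the $B^1(\ell)$-component. This interpretation is consistent with the stated left $A$-module conventions ($\eta_L$ on $\Gamma$, $f^*$ on $B^1(\ell)$).

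First, I would establish (\ref{eq:Dproduct}) in the $\Gamma$-component by a direct calculation using that $\eta_R$ and $\eta_L$ are both ring homomorphisms:
\[
(\eta_R(x) - \eta_L(x))\eta_R(y) + \eta_L(x)(\eta_R(y) - \eta_L(y)) = \eta_R(x)\eta_R(y) - \eta_L(x)\eta_L(y) = (\eta_R-\eta_L)(xy),
\]
which matches the $\Gamma$-component of $D(xy)$. The identical manipulation, with $(q^*, f^*)$ replacing $(\eta_R, \eta_L)$, handles the $B^1(\ell)$-component, using that $q^*$ and $f^*$ are likewise ring homomorphisms $A \to B^1(\ell)$.

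To derive (\ref{eq:Dsquare}), I would specialize (\ref{eq:Dproduct}) to $y = x$ and use the identity $\eta_R(x) = x + D(x)$ valid in each component (where, under our conventions, $x$ denotes $\eta_L(x)$ in $\Gamma$ and $f^*(x)$ in $B^1(\ell)$). This gives
\[
D(x^2) = D(x)(x + D(x)) + xD(x) = 2xD(x) + D(x)^2,
\]
where we use commutativity of $\Gamma$ and $B^1(\ell)$ as rings to collect the two $xD(x)$ terms. The final consequence (\ref{eq:Dleib}) is then immediate from (\ref{eq:Dproduct}), since $D(x)\eta_R(y)$ visibly lies in the principal ideal generated by $D(x)$.

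The computation is essentially formal; the only real obstacle is notational bookkeeping to track the distinction between the left $A$-module structure (via $\eta_L$, $f^*$) and the right one (via $\eta_R$, $q^*$) on $\Gamma$ and $B^1(\ell)$. No geometric input from the theory of elliptic curves or level structures is required — the lemma is a purely algebraic consequence of $D$ being built coordinatewise from differences of ring homomorphisms, and the argument would go through for any such difference map.
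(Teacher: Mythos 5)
Your proof is correct and is essentially the paper's own argument: both reduce the lemma to the purely formal observation that $D$ is, in each component, the difference of two ring homomorphisms ($\eta_R - \eta_L$ on $\Gamma$ and $q^* - f^*$ on $B^1(\ell)$), and both verify (\ref{eq:Dproduct}) by the same telescoping manipulation; your derivation of (\ref{eq:Dsquare}) from (\ref{eq:Dproduct}) via $\eta_R(x) = x + D(x)$ is a trivial repackaging of the paper's direct computation of $D(x^2)$. Your clarification that $\eta_R(y)$ must be read as $q^*(y)$ in the $B^1(\ell)$-component is the right reading and matches the paper's uniform use of $d_0$, $d_1$.
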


\begin{proof}
These identities hold for any map $D = d_0 - d_1: R^0 \rightarrow R^1$, the difference of two ring maps:
\begin{align*}
D(xy) & = d_0(x)d_0(y)- d_1(x)d_1(y) \\
& = d_1(x)(d_0(y)- d_1(y))  +(d_0(x)- d_1(x))d_0(y) \\
& = d_1(x) D(y) + D(x)d_0(y). 
\\
D(x^2) & = 
 d_0(x)^2- d_1(x)^2 \\
& = (d_0(x)- d_1(x))^2 + 2d_0(x)d_1(x) - 2d_1(x)^2 \\
& = D(x)^2 + 2d_1(x) D(x).
\end{align*}
\end{proof}

Observe that using the fact that $a_1 = v_1$, there are isomorphisms
\begin{align*}
\Gamma_{(2)} & \cong \ZZ_{(2)}[v_1][a_2, a_3, a_4, a_6, r, s, t][\Delta^{-1}], \\
B^1(3)_{(2)} & \cong \ZZ_{(2)}[v_1][a_3][\Delta^{-1}], \\
B^1(5)_{(2)} & \cong \ZZ_{(2)}[v_1][u][\Delta^{-1}].
\end{align*}
Express elements of  $\Gamma_{(2)}$ (respectively, $B^1(3)_{(2)}$, $B^1(5)_{2}$) ``$(2,v_1)$-adically'' so that every element is expressed as a power of the discriminant times a  sum of terms
$$  \Delta^{\ell} \sum_{k \ge 0} \sum_{j \ge 0}  2^k v_1^j c_{k,j} $$
for $\ell \in \ZZ$ and $c_{k,j} \in \FF_2[a_2, a_3, a_4, a_6, r, s, t]$ (respectively $\FF_2[a_3])$, $\FF_2[u]$).  We shall compare terms by saying that
$$ 2^k v_1^j c_{j,k} \quad \text{\emph{is larger than}} \quad 2^{k'} v_1^{j'} c_{j',k'} $$
if $(k, j)$ is larger than $(k', j')$ with respect to left lexicographical ordering.  We shall be concerned with ordered sums of monomials of the form:
\begin{eqnarray*}
&& v^{i_0}_1c_{0,i_0} + \text{terms of the form $v_1^j c_{0,j}$ with $j > i_0$ }\\
&& +2v^{i_1}_1c_{1, i_1}+  \text{terms of the form $2v_1^j c_{1,j}$ with $j > i_1$}\\
&& +4v^{i_2}_1c_{2, i_2}+  \text{terms of the form $4v_1^j c_{2,j}$ with $j > i_2$}\\
&& + \cdots \\
&& +2^nv^{i_n}_1c_{n, i_n}+  \text{larger terms}
\end{eqnarray*}
for $(i_0 > i_1 > \cdots > i_n)$ and $n \ge 1$.  Note that we permit the coefficients $c_{k,i_k}$ to be zero.   We shall abbreviate such expressions as 
\begin{align*}
& v^{i_0}_1c_{0,i_0} + \cdots \\
& \quad +2v^{i_1}_1c_{1, i_1}+  \cdots \\
& \quad \quad +4v^{i_2}_1c_{2, i_2}+  \cdots \\
& \quad \quad \quad + \cdots \\
& \quad \quad \quad \quad +2^nv^{i_n}_1c_{n, i_n}+  \cdots. 
\end{align*}

The following observation justifies considering such representations.

\begin{lemma}
Suppose that $x \in A_{(2)}$ satisfies 
\begin{align*}
D(x) = & v^{i_0}_1c_{0,i_0} + \cdots \\
& \quad +2v^{i_1}_1c_{1, i_1}+  \cdots \\
& \quad \quad +4v^{i_2}_1c_{2, i_2}+  \cdots \\
& \quad \quad \quad + \cdots \\
& \quad \quad \quad \quad +2^nv^{i_n}_1c_{n, i_n}+  \cdots 
\end{align*}
Then we have
\begin{equation}\label{eq:square}
\begin{split}
D(x^2) = & v^{2i_0}_1c^2_{0,i_0} + \cdots \\
& \quad + 2v^{i_0}_1c_{0,i_0}x + \cdots \\
& \quad \quad +4v^{i_1}_1c_{1, i_1}x+  \cdots \\
& \quad \quad \quad +8v^{i_2}_1c_{2, i_2}x+  \cdots \\
& \quad \quad \quad \quad + \cdots \\
& \quad \quad \quad \quad \quad +2^{n+1}v^{i_n}_1c_{n, i_n}x+  \cdots 
\end{split}
\end{equation}
and for $m$ odd we have
\begin{equation}\label{eq:oddpower}
\begin{split}
D(x^{m}) = & v^{i_0}_1c_{0,i_0}x^{m-1} + \cdots \\
& \quad +2v^{i_1}_1c_{1, i_1}x^{m-1} +  \cdots \\
& \quad \quad +4v^{i_2}_1c_{2, i_2}x^{m-1} +  \cdots \\
& \quad \quad \quad + \cdots \\
& \quad \quad \quad \quad +2^nv^{i_n}_1c_{n, i_n}x^{m-1} +  \cdots. 
\end{split}
\end{equation}
\end{lemma}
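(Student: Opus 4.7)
The plan is to establish both formulas by direct manipulation from Lemma~\ref{lem:Dleib}, combined with a leading-term analysis that exploits the strict decrease $i_0 > i_1 > \cdots > i_n$.

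First I would iterate the Leibniz identity (\ref{eq:Dproduct}) to obtain
\[ D(x^m) = \sum_{j=0}^{m-1} x^j\, D(x)\, \eta_R(x)^{m-1-j} \]
by an easy induction on $m$. Since $D = d_0 - d_1$ and the $A$-module structure on $\Gamma \oplus B^1(\ell)$ is via $d_1$, we have $\eta_R(x) = d_1(x) + D(x) = x + D(x)$ (interpreting $x$ on the right via $d_1$). Expanding $\eta_R(x)^{m-1-j}$ binomially and collecting via the hockey-stick identity $\sum_{j=0}^{m-1}\binom{m-1-j}{p-1} = \binom{m}{p}$ yields the compact form
\[ D(x^m) = \sum_{p \geq 1} \binom{m}{p}\, x^{m-p}\, D(x)^p. \]

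For (\ref{eq:square}), specializing to $m=2$ recovers (\ref{eq:Dsquare}), and I would then substitute the given expansion of $D(x)$ term by term. The $2xD(x)$ summand shifts each level-$k$ leading term of $D(x)$ to a level-$(k+1)$ leading contribution $2^{k+1} v_1^{i_k} c_{k,i_k} x$. Writing $D(x) = \sum_k A_k$ for the level-$k$ decomposition, the square $D(x)^2 = \sum_k A_k^2 + 2\sum_{a<b} A_a A_b$ contributes the level-$0$ leading term $v_1^{2 i_0} c_{0,i_0}^2$ from $A_0^2$, while at every level $\ell \geq 1$ the contributions from $2 A_a A_b$ (with $a+b=\ell-1$), from internal cross terms within $A_{(\ell-1)/2}^2$ (when $\ell$ is odd), and from pure squares $A_{\ell/2}^2$ (when $\ell$ is even) have leading $v_1$-powers strictly exceeding $i_{\ell-1}$, by the strict monotonicity of the $i_k$'s. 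Hence these all lie in the ``larger'' tail, and (\ref{eq:square}) follows.

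For (\ref{eq:oddpower}) with $m$ odd, I would write $m = 1 + 2\mu$ and split the $p=1$ summand of the compact formula as $x^{m-1} D(x) + 2\mu\, x^{m-1} D(x)$. The first piece multiplies the expansion of $D(x)$ by $x^{m-1}$ term by term, producing exactly the claimed leading terms $2^k v_1^{i_k} c_{k,i_k} x^{m-1}$. The second piece shifts every level-$k$ term up one $2$-adic level while retaining $v_1$-power $i_k > i_{k+1}$, placing it in the ``larger'' tail at level $k+1$. For $p \geq 2$, the leading $v_1$-power of $D(x)^p$ at a given level $\ell$ is bounded below by $\min\{i_{k_1} + \cdots + i_{k_p} : k_1 + \cdots + k_p = \ell\}$, which by the same strict-monotonicity argument strictly exceeds $i_\ell$, so each $\binom{m}{p} x^{m-p} D(x)^p$ with $p \geq 2$ sits entirely in the ``larger'' tail.

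The main obstacle is the bookkeeping of $v_1$-power inequalities showing that every cross product in $D(x)^2$ and every higher power $D(x)^p$ ($p \geq 2$) lands strictly in the ``larger'' tail at every $2$-adic level. This hinges on the telescoping observation that strict monotonicity forces $i_a \geq i_{a+b} + b$ whenever $a+b \leq n$, hence $i_a + i_b > i_{a+b}$ whenever the relevant cross term occurs. Once this inequality is in hand, both (\ref{eq:square}) and (\ref{eq:oddpower}) follow by collecting like terms in the Leibniz-derived expansion.
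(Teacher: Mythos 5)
Your proof is correct, but for (\ref{eq:oddpower}) it takes a genuinely different route from the paper. For (\ref{eq:square}) the two arguments coincide: the paper simply declares that it follows ``immediately'' from (\ref{eq:Dsquare}), and the leading-term analysis of $D(x)^2$ that you spell out (pure squares, internal cross terms, and the mixed terms $2A_aA_b$) is exactly what that ``immediately'' suppresses. For (\ref{eq:oddpower}), however, the paper inducts on odd $m = 2j+1$: writing $j = 2^t s$ with $s$ odd, it feeds the inductive hypothesis for $x^s$ through repeated applications of (\ref{eq:square}) to control $D(x^j)$ and then $D(x^{2j})$, and finishes with the congruence (\ref{eq:Dleib}) applied to $D(x^{2j}\cdot x)$ --- so only two-fold products ever need to be estimated. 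You instead collapse the recursion into the closed form $D(x^m) = \sum_{p\ge 1}\binom{m}{p}x^{m-p}D(x)^p$ (which is just $(d_1(x)+D(x))^m - d_1(x)^m$, so the iterated-Leibniz-plus-hockey-stick derivation, while correct, is more elaborate than necessary) and then discard every $p\ge 2$ term directly via the multi-index inequality $i_{k_1}+\cdots+i_{k_p} > i_{k_1+\cdots+k_p}$. Your route is non-inductive and makes the $2$-adic bookkeeping completely explicit, at the cost of estimating arbitrary $p$-fold products; the paper's route confines all estimates to the two identities of Lemma~\ref{lem:Dleib} but hides the bookkeeping inside the induction. One caveat, which your argument shares with the paper's statement rather than introduces: discarding the pure square $A_1^2$ at $2$-adic level $2$ requires $2i_1 > i_1$, which fails when $i_1 = 0$ (as happens, e.g., for $D(x_0)$), so the displayed leading coefficient of (\ref{eq:square}) at level $2$ is only literally correct when the relevant exponent is positive --- as it is in every instance where the paper actually invokes (\ref{eq:square}).
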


\begin{proof}
The identity (\ref{eq:square}) follows immediately from (\ref{eq:Dsquare}).  We prove (\ref{eq:oddpower}) by induction on $m = 2j+1$.  Suppose that we know (\ref{eq:oddpower}) for all odd $m' < m$.  Write $j  = 2^t s$ for $s$ odd.  Then by the inductive hypothesis, and repeated applications of (\ref{eq:square}), we deduce that
\begin{align*}
D(x^{j}) = & v^{i_0}_1c'_{0,i_0} + \cdots \\
& \quad +2v^{i_1}_1c'_{1, i_1} +  \cdots \\
& \quad \quad +4v^{i_2}_1c'_{2, i_2} +  \cdots \\
& \quad \quad \quad + \cdots \\
& \quad \quad \quad \quad +2^nv^{i_n}_1c'_{n, i_n} +  \cdots. 
\end{align*}
Applying (\ref{eq:square}), we have
\begin{align*}
D(x^{2j}) = & v^{2i_0}_1(c'_{0,i_0})^2 + \cdots \\
& \quad + 2v^{i_0}_1c'_{0,i_0}x^j + \cdots \\
& \quad \quad +4v^{i_1}_1c'_{1, i_1}x^j+  \cdots \\
& \quad \quad \quad +8v^{i_2}_1c'_{2, i_2}x^j+  \cdots \\
& \quad \quad \quad \quad + \cdots \\
& \quad \quad \quad \quad \quad +2^{n+1}v^{i_n}_1c'_{n, i_n}x^j+  \cdots. 
\end{align*}
It follows from (\ref{eq:Dleib}) that we have
\begin{align*}
D(x^{2j+1}) = D(x^{2j}x) = & v^{i_0}_1c_{0,i_0}x^{2j} + \cdots \\
& \quad +2v^{i_1}_1c_{1, i_1}x^{2j} +  \cdots \\
& \quad \quad +4v^{i_2}_1c_{2, i_2}x^{2j} +  \cdots \\
& \quad \quad \quad + \cdots \\
& \quad \quad \quad \quad +2^nv^{i_n}_1c_{n, i_n}x^{2j} +  \cdots. 
\end{align*}
\end{proof}

\subsection{Overview of the technique}
\label{subsec:technique}

The technique for the proof of Theorem~\ref{thm:Q3beta} is as follows (following \cite{MillerRavenelWilson} and \cite{Shimomura}):
\begin{description} 
\item[Step 1] Compute the differentials from the $s = 0$ to the $s = 1$-lines in the $v_1$-BSS
\begin{equation}\label{eq:v1BSSa}
H^{s,*}(M_2^0 C^*_{tot}(Q(3))) \otimes \FF_2[v_1]/(v_1^\infty) \Rightarrow H^{s,*}(M_1^1 C^*_{tot}(Q(3))). \end{equation}
This establishes the existence and $v_1$-divisibility of $v_2^i/v^j_1$ in $H^{0,*}(C^*_{tot}(Q(3)))$.

\item[Step 2] For $i,j$ as above, demonstrate that $v_2^i/2^k v_1^j$ exists in $H^{0,*}(M_0^2 C^*_{tot}(Q(3)))$ by writing down an element
$$ x_{i/j,k} = \frac{a_3^{i}}{2^k v_1^j} + \text{terms with smaller denominators}  \in M_0^2 A $$
with $D_{tot}(x) = 0$.

\item[Step 3] Given $j$, find the maximal $k$ such that $x_{i/j,k}$ exists by using the exact sequence
$$ H^{0,*}(M_0^2 C^*_{tot}(Q(3))) \xrightarrow{\cdot 2}  H^{0,*}(M_0^2 C^*_{tot}(Q(3))) \xrightarrow{\partial} H^{1,*}(M_1^1 C^*_{tot}(Q(3))). $$
Specifically, the maximality of $k$ is established by showing that $\partial(x_{i/j,k}) \ne 0$.  The non-triviality of  $\partial(x_{i/j,k})$ can be demonstrated by considering its image under the inclusion:
$$ H^{1,*}(M_1^1 C^*_{tot}(Q(3))) \hookrightarrow \mathrm{Coker} \, M_1^1(D_{tot}) $$
where $M_1^1(D_{tot})$ is the map
$$ M_1^1 (D_{tot}): M_1^1 A \rightarrow M_1^1\Gamma \oplus M_1^1 B^1(3) \oplus M_1^1 A $$
essentially computed in Step 1  by the computation of the differentials from $s = 0$ to $s = 1$ in the spectral sequence (\ref{eq:v1BSSa}).
\end{description}

\subsection{Computation of $\pmb{H^{*,*}(M_2^0 C^*_{tot}(Q(3)))}$}

We have \cite[Sec.~7]{Tilman}
\begin{gather*}
H^{*,*}(M_2^0 C^*_{\Gamma}(A))  = \FF_2[a_3^{\pm 1},  h_{1}, h_2, g]/(h_2^3 = a_3 h_1^3), \\
H^{*,*}(M_2^0 C^*_{\Lambda^1(3)}(B^1))  = \FF_2[a^{\pm}_3, h_{2,1}]  
\end{gather*}
with $(s,t)$-bidegrees
\begin{align*}
| a_3 | & = (0,6),  \\
| h_1 | & = (1, 2),  \\
| h_2 | & = (1,4),  \\
| g | & = (4, 24), \\ 
| h_{2,1} | & = (1,6),
\end{align*} 
and $h_{2,1}^4 = g$.  
Moreover, the spectral sequence of the double complex gives
\begin{equation}\label{eq:ssdouble}
 \begin{array}{c}
H^{s,t}(M_2^0 C^*_{\Gamma}(A)) \oplus  \\
H^{s-1,t}(M_2^0 C^*_{\Gamma}(A)) \oplus H^{s-1,t}(M_2^0 C^*_{\Lambda^1(3)}(B^1)) \oplus \\ 
H^{s-2,t}(M_2^0 C^*_{\Lambda^1(3)}(B^1)) 
\end{array}
\Bigg{\}}
\Rightarrow  H^{s,t}(M_2^0 C^*_{tot}(Q(3))).
\end{equation}
In order to differentiate the terms $x$ with the same name (such as $a_3$) occurring in the different groups in the $E_1$-term of spectral sequence (\ref{eq:ssdouble}), we shall employ the following notational convention:
\begin{equation*}
\begin{split}
x & \in C^*_{\Gamma}(A) \: \text{on the $0$-line}, \\
\bar{x} & \in C^*_{\Gamma}(A) \: \text{on the $1$-line}, \\
x' & \in C^*_{\Lambda^1(\ell)}(B^1) \: \text{on the $1$-line}, \\
\bar{x}' & \in C^*_{\Lambda^1(\ell)}(B^1) \: \text{on the $2$-line}.
\end{split}
\end{equation*}
The formulas of Section~\ref{subsec:Q3} show that the only non-trivial $d_1$ differentials in spectral sequence (\ref{eq:ssdouble}) are 
$$ d_1(g^i (\bar{a}_3)^j) = h_{2,1}^{4i} (\bar{a}'_3)^j. $$
Since $g$ is the image of the element $g \in \Ext^{4,24}(BP_*)$ (the element that detects $\bar{\kappa}$ in the ANSS for the sphere), and the spectral sequence (\ref{eq:ssdouble}) is a spectral sequence of modules over $\Ext^{*,*}(BP_*)$, we deduce that there are no possible $d_r$-differentials for $r > 1$.  We deduce that we have 
\begin{align*}
H^{*,*}(M_2^0 C^*_{tot}(Q(3))) & =  \FF_2[a_3^{\pm 1},  h_{1}, h_2, g]/(h_2^3 = a_3 h_1^3) \\
& \quad \oplus \FF_2[\bar{a}_3^{\pm 1}, \bar{g}]\{ \bar{h}_1, \bar{h}_2, \bar{h}_1^2, \bar{h}^2_2, \bar{h}_2^3 = \bar{a}_3 \bar{h}_1^3\} \\
& \quad \oplus  \FF_2[(a'_3)^{\pm 1}, h'_{2,1}]  \\
& \quad \oplus  \FF_2[(\bar{a}'_3)^{\pm 1}, \bar{g}']\{ \bar{h}'_{2,1}, (\bar{h}'_{2,1})^2, (\bar{h}'_{2,1})^3 \}.
\end{align*}

\begin{rmk}\label{rmk:Q3wontwork}
Note that $H^{*,*}(M_2^0 C^*_{tot}(Q(3)))$ is less than half of $\Ext^{*,*} (M_2^0 BP_*)$.  This indicates that $Q(3)$ cannot agree with `half' of the proposed duality resolution of Goerss-Henn-Mahowald-Rezk at $p = 2$ \cite{Henn}, despite the fact that it is built from the same spectra.  In particular, the fiber of the map
$$ S_{K(2)} \rightarrow Q(3)_{K(2)} $$
\emph{cannot} be the dual of $Q(3)_{K(2)}$.
\end{rmk}

\subsection{Computation of $\pmb{H^{0,*}(M_1^1 C^*_{tot}(Q(3)))}$}\label{sec:Q3v1BSS}

We now compute the differentials in the $v_1$-BSS
\begin{equation}\label{eq:v1BSS}
H^{s,*}(M_2^0 C^*_{tot}(Q(3))) \otimes \FF_2[v_1]/(v_1^\infty) \Rightarrow H^{s,*}(M_1^1 C^*_{tot}(Q(3)))
\end{equation}
from the $s = 0$-line to the $s = 1$-line.  This computation was originally done by Mahowald and Rezk \cite{MahowaldRezk}, but we redo it here to establish notation, and to motivate the rationale behind some of the computations to follow.

One computes using the formulas of Section~\ref{subsec:Q3}:
\begin{equation}\label{eq:Dxi}
\begin{split}
D(x_0) & \equiv a_1 s^2 \mod (2, v_1^2), \\
D(x_1) & \equiv a_1^2 a_3 s \mod (2, v_1^3), \\
D(x_2) & \equiv (a'_1)^6 (a'_3)^2 \mod (2, v_1^7)
\end{split}
\end{equation}
for 
\begin{align*}
x_0 & := a_3 + a_1a_2 \equiv a_3 \mod (2, v_1) \\
x_1 & := x_0^2 + a_1^2 a_4 + a_1^2 a_2^2 \equiv a_3^2 \mod (2, v_1) , \\
x_2 & := \Delta \equiv a_3^4 \mod (2, v_1).
\end{align*}

\begin{rmk}\label{rmk:kill}
The above formulas for $x_i$ were obtained by the following method.  In the complex $M_2^0 C^*_{\Gamma}(A)$, we have
\begin{align*}
d(a_2) & = r + \cdots \\
d(a_4+a_2^2) & = s^4 + \cdots \\
d(a_6) & = t^2 + \cdots.
\end{align*}
These are used in \cite[Sec.~6]{Tilman} to produce a complex which is closely related to the cobar complex on the double of $A(1)_*$.  To arrive at $x_0$ we calculate
$$ D(a_3) = a_1 r + \cdots $$
which means that we need to add the correction term $a_1 a_2$ to arrive at $x_0$.  The expression for $x_1$ was similarly produced.  The definition $\Delta$ is a natural candidate for $x_2$, as it is an element of the form $a_3^4  + \cdots $ which is already known to be a cocycle in $C^0_{\Gamma}(A)$.
\end{rmk}

It follows from inductively applying (\ref{eq:square}) that we have
$$ D(x_2^{2^{n-2}}) \equiv (a'_1)^{3 \cdot 2^{n-1}}(a'_3)^{2^{n-1}} \mod (2, v_1^{3 \cdot 2^{n-1}+1}). $$
It follows from $(\ref{eq:oddpower})$ that for $m$ odd, we have
\begin{align*}
D(x^m_0) & \equiv a_1 s^2 a_3^{m-1} \mod (2, v_1^2), \\
D(x_1^m) & \equiv a_1^2 a^{2m-1}_3 s \mod (2, v_1^3), \\
D(x_2^{m2^{n-2}}) & \equiv (a'_1)^{3 \cdot 2^{n-1}}(a'_3)^{m 2^n - 2^{n-1}} \mod (2, v_1^{3 \cdot 2^{n-1}+1}).
\end{align*}

We deduce the following.

\begin{lemma}\label{lem:v1BSS}
The $v_1$-BSS differentials in (\ref{eq:v1BSS}) from the $(s = 0)$-line to the $(s = 1)$-line are given by
\begin{align*}
d_{1}\left( \frac{a_{3}^{m}}{v_1^j}\right) & = \frac{a^{m-1}_{3} h_2}{v_1^{j-1}}, \\
d_{2}\left( \frac{a_{3}^{2m}}{v_1^j}\right) & = \frac{a^{2m-1}_{3} h_1}{v_1^{j-2}}, \\
d_{3 \cdot 2^{n-1}} \left( \frac{a_3^{m2^n}}{v_1^j} \right) & = \frac{(a'_3)^{m2^n - 2^{n-1}}}{v_1^{j - 3\cdot 2^{n-1}}}
\end{align*}
where $m$ is odd.
\end{lemma}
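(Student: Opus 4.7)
The lemma stratifies the $s=0$ classes $a_3^m/v_1^j$ by $\nu_2(m)$: $m$ odd supports $d_1$, $m \equiv 2 \pmod 4$ supports $d_2$, and $m = m' 2^n$ with $m'$ odd and $n\ge 2$ supports $d_{3 \cdot 2^{n-1}}$. The strategy is the standard Bockstein recipe: to compute $d_r$ on a class $x/v_1^j$ we lift $x$ to $\tilde x \in A$ which is a cocycle modulo $v_1^r$ (but not modulo $v_1^{r+1}$), apply $D_{tot}$, divide by $v_1^r$, and identify the result as a cocycle in the $s=1$ line of the total complex.

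All three lifts are provided in the paragraph preceding the lemma. The elements $x_0 = a_3 + a_1a_2$, $x_1 = x_0^2 + a_1^2 a_4 + a_1^2 a_2^2$, and $x_2 = \Delta$ reduce modulo $(2,v_1)$ to $a_3$, $a_3^2$, and $a_3^4$ respectively, and satisfy the key congruences (\ref{eq:Dxi}). Applying Lemma~\ref{lem:Dleib} (specifically the iterated identities (\ref{eq:square}) and (\ref{eq:oddpower})) propagates these congruences to arbitrary odd powers, yielding the displayed expressions for $D(x_0^m)$, $D(x_1^m)$, and $D(x_2^{m 2^{n-2}})$. In each case the $v_1$-adic valuation of $D$ is exactly $1$, $2$, or $3\cdot 2^{n-1}$ respectively, which pins down the length of the Bockstein differential.

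It remains to identify the leading terms with the named $E_1$-classes. Since $s$ is primitive in $\Gamma$, the cochain $[s]$ is a $1$-cocycle in bidegree $(1,2)$; by dimension it must equal $h_1$ modulo $(2,v_1)$. Modulo $2$, $s^2$ is also primitive, so $[s^2]$ is a $1$-cocycle in $(1,4)$ and must equal $h_2$. Dividing $D(x_0^m) \equiv v_1 \cdot s^2 a_3^{m-1}$ by $v_1$ yields $a_3^{m-1} h_2$, giving the first differential; dividing $D(x_1^m) \equiv v_1^2 \cdot a_3^{2m-1} s$ by $v_1^2$ yields $a_3^{2m-1} h_1$, giving the second. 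For the third, the output of $D$ lands in the $C^*_{\Lambda^1(3)}(B^1)$ summand of $C^1_{tot}(Q(3))$, so by the notational convention (\ref{eq:notation}) and after dividing by $v_1^{3 \cdot 2^{n-1}}$ we obtain $(a'_3)^{m 2^n - 2^{n-1}}$ on the $s=1$ line, as claimed.

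The work is essentially bookkeeping rather than conceptual; the main points of care are (i) verifying that $x_i^m$ remains a cocycle modulo exactly $v_1^r$ (not higher), which is automatic from the exact $v_1$-valuations produced by (\ref{eq:square}) and (\ref{eq:oddpower}), and (ii) confirming that the leading coefficients $s^2 a_3^{m-1}$, $a_3^{2m-1} s$, and $(a'_3)^{m 2^n - 2^{n-1}}$ are nonzero in the respective $E_r$-pages so that the Bockstein differential truly terminates there rather than on a later page. The second point is immediate, since these classes are nontrivial already on $E_1$ and the spectral sequence is of Bockstein type.
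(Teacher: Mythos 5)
Your proposal is correct and follows essentially the same route as the paper: the paper also takes the lifts $x_0$, $x_1$, $x_2=\Delta$, records the congruences (\ref{eq:Dxi}), propagates them to the powers $x_0^m$, $x_1^m$, $x_2^{m2^{n-2}}$ via (\ref{eq:square}) and (\ref{eq:oddpower}), and reads off the three families of Bockstein differentials from the exact $v_1$-valuations, identifying $[s]$ with $h_1$ and $[s^2]$ with $h_2$ exactly as you do. No substantive difference.
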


\begin{cor}
The groups $H^{0,*}(M_1^1C^*_{tot}(Q(3)))$ are generated by the elements  
$$ \frac{a_3^{m2^n}}{v_1^{j}}$$ 
for $m$ odd and $j \le a(n)$.
\end{cor}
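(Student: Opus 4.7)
The plan is to read the corollary directly off the $(s=0)$-line of the $E_\infty$-page of the $v_1$-BSS \eqref{eq:v1BSS}, using the differentials just computed in Lemma~\ref{lem:v1BSS}. First I would record that $H^{0,*}(M_2^0 C^*_{tot}(Q(3))) = \FF_2[a_3^{\pm 1}]$ (the only contribution on the $s=0$ line in the earlier decomposition of $H^{*,*}(M_2^0 C^*_{tot}(Q(3)))$). Hence the $s=0$ stratum of the $E_1$-page of the $v_1$-BSS is the $\FF_2$-vector space spanned by $\dfrac{a_3^k}{v_1^j}$ for $k\in\ZZ$ and $j\ge 1$. Because Bockstein differentials strictly raise cohomological degree, no such class is ever a target, so the $E_\infty$-survivors on this line are exactly those classes that do not themselves support a nontrivial differential.

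Next I would write $k=m2^n$ with $m$ odd and walk through the three families of differentials in Lemma~\ref{lem:v1BSS}. For $n=0$, the $d_1$ differential sends $\dfrac{a_3^m}{v_1^j}$ to $\dfrac{a_3^{m-1}h_2}{v_1^{j-1}}$, whose target is nonzero in $M_1^1$ exactly when $j\ge 2$; the class survives iff $j\le 1 = a(0)$. For $n=1$, the $d_1$ differential vanishes on $a_3^{2m}$ (the exponent is even), while $d_2$ sends $\dfrac{a_3^{2m}}{v_1^j}$ to $\dfrac{a_3^{2m-1}h_1}{v_1^{j-2}}$, giving survival iff $j\le 2 = a(1)$. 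For $n\ge 2$, the first two differentials both vanish on $a_3^{m2^n}$, and the first nonzero differential is $d_{3\cdot 2^{n-1}}$, whose target vanishes iff $j\le 3\cdot 2^{n-1} = a(n)$. These three cases together exhaust every $k\in\ZZ$ and match the stated bound $j\le a(n)$ precisely.

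To finish, I would observe that each topological degree $t$ contains only finitely many pairs $(k,j)$ with $6k - 2j = t$ and $1\le j\le a(n)$ (where $n = \nu_2(k)$), so the $v_1$-filtration on $H^{0,t}(M_1^1 C^*_{tot}(Q(3)))$ is finite in each degree and the $\FF_2$-vector-space extensions between filtration layers split trivially. Thus the $E_\infty$-classes lift to a set of additive generators of the form claimed.

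I do not expect a serious obstacle here, since the corollary is essentially bookkeeping on the survivors of a BSS whose differentials out of the $(s=0)$-line have just been enumerated; the one point meriting a sentence of justification is that Lemma~\ref{lem:v1BSS} is a \emph{complete} list of the relevant $d_r$'s, which is visible from the fact that every $k\in\ZZ$ falls into exactly one of the three cases $n=0$, $n=1$, $n\ge 2$ and each case is assigned a nonzero differential supported on $\dfrac{a_3^k}{v_1^j}$ for all sufficiently large~$j$.
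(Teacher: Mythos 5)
Your proposal is correct and is essentially the argument the paper intends: the corollary is stated as an immediate deduction from Lemma~\ref{lem:v1BSS} (the paper gives no separate proof), and reading the survivors off the $(s=0)$-line of the $v_1$-BSS exactly as you do — noting that $H^{0,*}(M_2^0 C^*_{tot}(Q(3))) = \FF_2[a_3^{\pm 1}]$, that nothing on that line can be a target, and that the three families of differentials give the bounds $j \le a(0)=1$, $j\le a(1)=2$, and $j \le 3\cdot 2^{n-1}=a(n)$ for $n\ge 2$ — is precisely the intended bookkeeping. Your closing remark about the completeness of the list of differentials and the finiteness of the filtration in each degree is a reasonable extra sentence of care, not a deviation from the paper's route.
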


\subsection{Computation of $\pmb{H^{0,*}(M_0^2 C^*_{tot}(Q(3)))}$}

We now prove Theorem~\ref{thm:Q3beta}, which is more specifically stated below.

\begin{thm}\label{thm:Q3beta2}
The groups $H^{0,*}(M_0^2 C^*_{tot}(Q(3)))$ are spanned by elements:
\begin{align*}
& \frac{1}{2^k v_1^j},  &  & j \ge 1 \: \text{and} \:  k \le k(j); \\
\\  
& \frac{a_3^{mp^n}}{2^k v_1^j}, & & 2 \nmid m, \: k \le k(j), \: \text{and}\\
& & &  j  \le \begin{cases}
a(1), & k = 3, n = 2, \\
a(n-k+1), & \text{otherwise}.
\end{cases}
\end{align*} 
\end{thm}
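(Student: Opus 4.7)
The plan is to carry out the three-step procedure laid out in Section \ref{subsec:technique}, using as input the computations of Section \ref{sec:Q3v1BSS}. Step 1 is already in hand: Lemma \ref{lem:v1BSS} and its corollary describe $H^{0,*}(M_1^1 C^*_{tot}(Q(3)))$ as the $\FF_2$-span of $a_3^{m 2^n}/v_1^j$ with $m$ odd and $j \le a(n)$, and more importantly they pin down the targets of the $v_1$-divided differentials in $H^{1,*}(M_1^1 C^*_{tot}(Q(3)))$ via the leading $(2,v_1)$-adic expansions of $D(x_0)$, $D(x_1)$, $D(x_2)$ and their odd powers. This data is exactly what Step 3 will need.

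For Step 2, I would construct, for each generator $a_3^{m 2^n}/v_1^j$ in $H^{0,*}(M_1^1)$ and each admissible $k$, an explicit cocycle
\begin{equation*}
y_{m2^n/j,\,k} \;=\; \frac{a_3^{m2^n}}{2^k v_1^j} + (\text{terms with lexicographically smaller } (k,j)) \;\in\; M_0^2 A.
\end{equation*}
These are built iteratively: start from a product $x_0^{a} x_1^{b} x_2^{c}$ representing $a_3^{m 2^n}$, and then repeatedly apply the doubling formula (\ref{eq:square}) and the odd-power formula (\ref{eq:oddpower}) to push the $(2,v_1)$-adic valuation of $D$ upwards, absorbing each obstruction into a correction term of strictly smaller weight. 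The bound $k \le k(j)$ is forced by the third component $\psi^3 - 1$ of $D_{tot}$, which acts on $A_{2t}$ as a unit multiple of $2^{k(t)}$; the finer bound on $j$ as a function of $(k,n)$ emerges from the $v_1$-valuations of the terms one is compelled to absorb along the way.

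For Step 3, the sharpness of $k$ is obtained from the exact sequence
\begin{equation*}
H^{0,*}(M_0^2 C^*_{tot}(Q(3))) \xrightarrow{\,\cdot 2\,} H^{0,*}(M_0^2 C^*_{tot}(Q(3))) \xrightarrow{\,\partial\,} H^{1,*}(M_1^1 C^*_{tot}(Q(3))).
\end{equation*}
If one tried to increase $k$ beyond the claimed maximum, the image $\partial(y_{m2^n/j,\,k+1})$ would be, by construction, the class of $D(y)/(2^{k+2} v_1^j)$. Mapping further into $\operatorname{Coker} M_1^1(D_{tot})$ via the injection from $H^{1,*}(M_1^1 C^*_{tot}(Q(3)))$, this class is recognized as one of the non-zero generators from Step 1, which rules out the hypothetical lift and hence establishes the bound.

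The principal obstacle is bookkeeping: several $(2,v_1)$-adic congruences must be tracked simultaneously as $(m,n,j,k)$ vary, and the threshold function $a(n-k+1)$ is itself piecewise. The most delicate case is the exceptional entry $k=3$, $n=2$, $j\le a(1)=2$, where the bound beats the generic value $a(n-k+1)=a(0)=1$ by one power of $v_1$. I expect this extra divisibility to come from a numerical coincidence in the leading coefficients of $D$ applied to a carefully chosen combination of $x_1^m$ and $x_2$, which cancels modulo $(2^4, v_1^2)$ but not modulo $(2^4, v_1^3)$; verifying this cancellation will require an explicit cochain-level computation rather than the formal inductive argument used elsewhere.
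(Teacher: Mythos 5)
Your plan is the paper's own: Theorem~\ref{thm:Q3beta2} is proved by exactly the three-step procedure of Section~\ref{subsec:technique}, with Lemma~\ref{lem:v1BSS} as Step 1, explicit cocycles built from $x_0,x_1,x_2$ via (\ref{eq:Dsquare}) and (\ref{eq:oddpower}) as Step 2, the $\psi^3-1$ bound (Lemma~\ref{lem:psilimit}) giving $k\le k(t)$, and the Bockstein $\partial$ mapped into $\mathrm{Coker}\,M_1^1(D_{tot})$ as Step 3. So the strategy is right, but as written the proposal stops at the roadmap and misses two points where the roadmap alone does not suffice. First, in Step 3 you assert that the image of $\partial$ ``is recognized as one of the non-zero generators from Step 1.'' That fails on the nose in several of the cases it is supposed to settle: for example, for $n=0$ and $m\equiv 1 \bmod 4$ the leading term $a_3^m h_1/v_1^2$ of $\partial(a_3^m/2v_1)$ is itself a boundary in the $v_1$-BSS (it is hit by $d_2(a_3^{m+1}/v_1^4)$ by Lemma~\ref{lem:v1BSS}), so one must first add the correction $2x_1^{(m+1)/2}$ to the cochain representative and extract the next leading term $(a_3')^m/v_1$ before concluding non-vanishing. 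Without anticipating this iteration, the non-divisibility argument collapses exactly where it matters.

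Second, for $j$ odd the ideal $(2^k, v_1^{j2^{k-2}})$ is not invariant, so $D(x_2^{m2^{n-2}}/2^k v_1^{j2^{k-2}})$ cannot be computed by naive reduction; one must instead compute $D(a_1^{2^{k-2}}x_2^{m2^{n-2}})$ modulo the invariant ideal $(2^k, v_1^{j2^{k-2}+2^{k-2}})$, and the extra terms contributed by $D(a_1^{2^{k-2}}) \equiv 2^{k-1}a_1^{2^{k-2}-2}s^2 + 2^{k-1}a_1^{2^{k-2}-1}s \bmod 2^k$ must then be cancelled by a further correction of the form $x_0^{m2^n+1}/2v_1^{j2^{k-2}+3}$. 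Your phrase ``absorb each obstruction into a correction term of strictly smaller weight'' glosses over precisely this parity split, which is where most of the labor in the actual proof lives. Finally, your guess for the exceptional entry $k=3$, $n=2$ is off in substance: the relevant cocycle is not a combination of $x_1^m$ and $x_2$ but $x_2^m/8v_1^2 + x_0^{4m+1}/2v_1^5 + a_3^{4m-3}(a_4+a_2^2)^2/2v_1$, where the last term enters because $(a_4+a_2^2)^2$ kills $s^8$ (cf.\ Remark~\ref{rmk:kill}); you rightly flagged this case as needing an explicit cochain computation, but the mechanism is that $s^8$-cancellation, not a coefficient coincidence between $x_1^m$ and $x_2$.
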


In many cases, the bounds on $2$-divisibility will follow from the following simple observation.

\begin{lemma}\label{lem:psilimit}
Suppose the element
$$ \frac{a_3^i}{2^k v_1^j} \in H^{0,2t}(M_0^2C_{tot}^*(Q(3))) $$
exists. Then $k \le k(t)$.
\end{lemma}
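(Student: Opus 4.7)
The plan is to exploit the third coface direction in $D_{tot}$, namely the map $\psi^3 - 1 : A \to A$ at the right end of the cosimplicial diagram $Q(3)^\bullet$. Since $\psi^3$ acts on weight-$t$ modular forms (topological degree $2t$) by multiplication by $3^t$, the map $\psi^3 - 1$ acts on the degree-$2t$ part of $A$ as multiplication by $3^t - 1$. Hence any cocycle $x \in M_0^2 A$ representing a class in $H^{0,2t}(M_0^2 C_{tot}^*(Q(3)))$ must, via the vanishing of this third component of $D_{tot}(x)$, satisfy $(3^t - 1) x = 0$ in $M_0^2 A$.

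Next I would compute the $2$-adic valuation of $3^t - 1$. For $t$ odd, $\nu_2(3^t - 1) = \nu_2(3-1) = 1$. For $t$ even, the Lifting-the-Exponent lemma gives
\[
\nu_2(3^t - 1) = \nu_2(3-1) + \nu_2(3+1) + \nu_2(t) - 1 = \nu_2(t) + 2.
\]
Either way $\nu_2(3^t - 1) = k(t)$ exactly. Writing $3^t - 1 = 2^{k(t)} u$ with $u$ odd (hence a $2$-local unit), the relation above becomes $2^{k(t)} x = 0$ in $M_0^2 A$.

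Finally I would verify that the $2$-torsion order of $x$ is exactly $2^k$. By the leading-term convention laid out in Section~\ref{subsec:technique}, every summand of $x$ has the form $y/(2^{k'} v_1^{j'})$ with $(k',j')$ no larger than $(k,j)$ in left-lex order; in particular $k' \le k$. Multiplying such a summand by $2^k$ yields $2^{k-k'} y/v_1^{j'}$, whose numerator lies in $BP_*$ and therefore represents zero in $M_0^2 A$, since $BP_*$ maps to zero in $N^1 = v_0^{-1} BP_*/BP_*$. Thus $2^k x = 0$. Conversely, $2^{k-1} x$ has leading term $a_3^i/(2 v_1^j)$, which is nonzero in $M_0^2 A$ and cannot be cancelled by any strictly smaller lex tail. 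Hence the $2$-order of $x$ equals $2^k$, and $2^{k(t)} x = 0$ forces $k \le k(t)$.

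The only real obstacle is bookkeeping: one must unwind the iterated-localization construction of $M_0^2$ just enough to confirm that the $2$-order of $x$ is determined by its lex-leading term. Once that observation and the LTE computation $\nu_2(3^t - 1) = k(t)$ are in place, the bound is immediate.
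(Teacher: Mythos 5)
Your proposal is correct and follows the paper's own argument: both proofs isolate the $\psi^3-1$ component of $D_{tot}$, note that it acts on degree $2t$ as multiplication by $3^t-1$, and conclude from $\nu_2(3^t-1)=k(t)$ that the cocycle condition forces $k\le k(t)$. Your additional verification that the $2$-order of the element is exactly $2^k$ is a harmless elaboration of what the paper leaves implicit.
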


\begin{proof}
The formula
$$ (\psi^3 - 1) \frac{a_3^i}{2^k v_1^j} = (3^t-1) \frac{\bar{a}_3^i}{2^k v_1^j} $$
implies that in order for
$$ 0 \ne D_{tot}\left( \frac{a_3^i}{2^k v_1^j} \right) \in M_0^2 C^1_{tot}(Q(3)) $$
we must have $k \le \nu_2(3^t-1)$.
\end{proof}

\begin{proof}[Proof of Theorem~\ref{thm:Q3beta2}]
Lemma~\ref{lem:v1BSS} established that for $m$ odd, $\frac{a_3^{m2^n}}{2v_1^j}$ exists for $1 \le j \le a(n)$.
In order to prove the required $2$-divisibility of these elements, we need to prove
\begin{align}
D\left( \frac{a_3^{4m}}{8 v_1^{2}} + \cdots  \right) & = 0, & &  \label{eq:Q3beta1}\\
D\left( \frac{a_3^{m2^n}}{4v_1^{2j}} + \cdots \right) & = 0, & & 2j \le a(n-1), \label{eq:Q3beta2} \\
D\left( \frac{a_3^{m2^n}}{2^k v_1^{j2^{k-2}}} + \cdots \right) & = 0, & & k \ge 3, \: j2^{k-2} \le a(n-k+1).  \label{eq:Q3beta3}
\end{align}
In light of Lemma~\ref{lem:psilimit}, to establish that these are the maximal $2$-divisibilities of these elements, we need only check that
\begin{align}
\partial\left( \frac{a_3^{m}}{2v_1} + \cdots \right) & \not\equiv 0 \mod D(M_1^1A), & &  \label{eq:Q3beta4} \\
\partial\left( \frac{a_3^{m2^n}}{2v_1^{2j}} + \cdots \right) & \not\equiv 0 \mod D(M_1^1A), & & a(n-1) < 2j \le a(n), \label{eq:Q3beta5} \\
\partial\left( \frac{a_3^{m2^n}}{2^{k-1} v_1^{j2^{k-2}}} + \cdots \right) & \not\equiv 0 \mod D(M_1^1A), & & k \ge 2, \: a(n-k+1) < j2^{k-1} \le a(n-k+2). \label{eq:Q3beta6}
\end{align}

\noindent
{\bf Proof of (\ref{eq:Q3beta4}).}
Using the formulas of Section~\ref{subsec:Q3}, we have
\begin{equation}\label{eq:Dx0}
\begin{split}
D(x_0) & = a_1 s^2 + \cdots \\
& \quad \quad + 2(t + rs + s^3 + a_2s) + \cdots \\
& \quad \quad + 2a_3' + \cdots. 
\end{split}
\end{equation}
It follows from (\ref{eq:oddpower}) that we have for $m$ odd
\begin{equation}\label{eq:Dx0^m}
\begin{split}
D(x^m_0) & = a_1 a_3^{m-1} s^2 + \cdots \\
& \quad \quad + 2a_3^{m-1}(t + rs + s^3 + a_2s) + \cdots \\
& \quad \quad + 2(a_3')^m + \cdots. 
\end{split}
\end{equation}
Since we have
\begin{equation}\label{eq:Da1}
\eta_R(a_1) = a_1 + 2s
\end{equation}
we deduce from (\ref{eq:Dx0^m}) using (\ref{eq:Dproduct}):
\begin{equation}\label{eq:Da1x0^m}
\begin{split}
D(x^m_0 a_1) & = a_1^2 a_3^{m-1} s^2 + \cdots \\
& \quad \quad + 2a_3^m s + 2a_1 a_3^{m-1}(t + rs) + \cdots \\
& \quad \quad + 2a'_1 (a_3')^m + \cdots. 
\end{split}
\end{equation}
Reducing modulo the invariant ideal $(4, v_1^2)$ we deduce
$$ \partial\left( \frac{a^m_3}{2 v_1} + \cdots \right) = \frac{a^m_3 h_1}{v_1^2} + \cdots. $$
Lemma~\ref{lem:v1BSS} implies that this expression is not in $D(M_1^1A)$ if $m \equiv 3 \mod 4$.
However, if $m \equiv 1 \mod 4$, then Lemma~\ref{lem:v1BSS} implies that $\frac{a^m_3 h_1}{v_1^2}$ is killed in the $v_1$-BSS (\ref{eq:v1BSS}) by $d_2(\frac{a^{m+1}_3}{v^4_1})$.
We compute using the formulas of Section~\ref{subsec:Q3}:
\begin{equation}\label{eq:Dx1}
\begin{split}
D(x_1) & = a_1^2 a_3 s + a_1^3(t+ rs) + \cdots \\
& \quad \quad + 2a_1 a_3 s^2 + \cdots \\
& \quad \quad + 2(a'_1)^3 a'_3 + \cdots.
\end{split}
\end{equation}
We deduce using (\ref{eq:oddpower}) that for $m$ odd we have:
\begin{equation}\label{eq:Dx1^m}
\begin{split}
D(x_1^m) & = a_1^2 a^{2m-1}_3 s + a_1^3a_3^{2m-2}(t+ rs) + \cdots \\
& \quad \quad + 2a_1 a^{2m-1}_3 s^2 + \cdots \\
& \quad \quad + 2(a'_1)^3 (a'_3)^{2m-1} + \cdots.
\end{split}
\end{equation}
We deduce that for $m \equiv 1 \mod 4$ we have
\begin{equation*}
\begin{split}
D(a_1^3 x^m_0+ 2x_1^{\frac{m+1}{2}}) & = a_1^4 a_3^{m-1} s^2 + \cdots \\
& \quad \quad + 2(a'_1)^3 (a_3')^m + \cdots. 
\end{split}
\end{equation*}
Thus we have for $m \equiv 1 \mod 4$:
$$ \partial\left( \frac{x_0^m}{2 v_1} + \cdots \right) = \frac{(a'_3)^m}{v_1} + \cdots $$
and Lemma~\ref{lem:v1BSS} implies that this expression is not in $D(M^1_1 A)$.
This establishes (\ref{eq:Q3beta4}).
\vspace{10pt}

\noindent
{\bf Proof of (\ref{eq:Q3beta5}) for $\pmb{n = 1}$.} Equation~(\ref{eq:Dx1^m}) implies that 
$$ \partial \left( \frac{a^{2m}_3}{v_1^2} + \cdots \right) = \frac{a_3^{2m-1}h_2}{v_1} + \cdots $$
which, by Lemma~\ref{lem:v1BSS}, is not in $D(M_1^1 A)$. This establishes (\ref{eq:Q3beta5}) for $n = 1$.
\vspace{10pt}

\noindent
{\bf Proof of (\ref{eq:Q3beta5}) for $\pmb{n = 2}$.}  We compute using the formulas of Section~\ref{subsec:Q3}
\begin{equation}\label{eq:Dx2}
\begin{split}
D(x_2) & = (a'_1)^6(a'_3)^2 + \cdots \\
& \quad \quad + 2(a'_1)^3(a'_3)^3 + \cdots. 
\end{split}
\end{equation}
Applying (\ref{eq:oddpower}), we get for $m$ odd:
\begin{equation}\label{eq:Dx2^m}
\begin{split}
D(x^m_2) & = (a'_1)^6(a'_3)^{4m-2} + \cdots \\
& \quad \quad + 2(a'_1)^3(a'_3)^{4m-1} + \cdots. 
\end{split}
\end{equation}
It follows that 
$$\partial \left( \frac{x_2^{m}}{2v_1^{2j}} \right) = \frac{(a'_3)^{4m-1}}{v_1^{2j-3}} + \cdots  $$
for $a(1) < 2j \le a(2)$, which is not in $D(M_1^1 A)$ by Lemma~\ref{lem:v1BSS}.  This establishes (\ref{eq:Q3beta5}) for $n = 2$.
\vspace{10pt}

\noindent
{\bf Proof of (\ref{eq:Q3beta1}).}
We deduce from (\ref{eq:Dx2^m}) that $\frac{a^{4m}_3}{4v_1^2}$ exists.  In order to understand its $2$-divisibility, we compute $\partial(\frac{a_3^{4m}}{4v_1^2})$, which is the obstruction to divisibility.   To do this we need to compute $D(\frac{x^m_2}{8v_1^2})$.  Since $(8, v_1^4)$ is an invariant ideal, we compute this from $D(a_1^2 x_2^m)$.  Since
\begin{equation}\label{eq:Da1^2}
D(a^2_1) = 4 s^2 + 4s a_1
\end{equation}
and
\begin{equation}\label{eq:x2}
x_2 \equiv a_3^4 + 2a_1^2 a_3^2 a_4 + a_3^3 a_1^3 \mod (4, v_1^4)
\end{equation}
we deduce from (\ref{eq:Dproduct}) that 
\begin{equation}
\begin{split}
D(a_1^2 x^m_2) & = (a'_1)^8(a'_3)^{4m-2} + \cdots \\
& \quad \quad + 2(a'_1)^5(a'_3)^{4m-1} + \cdots \\
& \quad \quad \quad +4a_3^{4m}s^2 + 4 a_1 a_3^{4m} s  + 4 a_1^3 a_3^{4m-1}s^2 + \cdots
\end{split}
\end{equation}
which gives
$$ D \left( \frac{x_2^{m}}{8v_1^2} \right) = \frac{a_3^{4m}s^2}{2v_1^4} + \frac{a_3^{4m}s}{2v_1^{3}} + \frac{a_3^{4m-1} s^2}{2v_1}.  $$
Lemma~\ref{lem:v1BSS} tells us that $\frac{a_3^{4m}h_2}{v_1^4}$ is killed by $\frac{x^{4m+1}_0}{v_1^{5}}$.  We compute
$$ D(x_0) \equiv a_1 s^2 + s a_1^2 \mod 2 $$
and thus
$$ D(x_0^4) \equiv a_1^4 s^8 \mod (2, v_1^5). $$
Using the fact that
$$ x_0^{4m} \equiv a_3^{4m} \mod (2, v_1^4) $$  
we have
$$ D(x_0^{4m+1}) \equiv a_1 a^{4m}_3 s^2 + a_1^2 a_3^{4m} s + a_1^4 a_3^{4m-3}s^8 \mod (2, v_1^5).  $$
and thus  
$$ D \left( \frac{x_2^{m}}{8v_1^2} + \frac{x_0^{4m+1}}{2v_1^5} \right) = \frac{a_3^{4m-3}s^8}{2v_1} + \frac{a_3^{4m-1} s^2}{2v_1}.  $$
Since $a_4 + a_2^2$ kills $s^4$ (see Remark~\ref{rmk:kill}), $(a_4 + a_2^2)^2$ kills $s^8$, and we compute
$$ D((a_4 + a_2^2)^2) \equiv s^8 + a_3^2 s^2 \mod (2, v_1). $$
Therefore we have
\begin{equation}\label{eq:Da4^2}
D \left( \frac{x_2^{m}}{8v_1^2} + \frac{x_0^{4m+1}}{2v_1^5} + \frac{a^{4m-3}_3(a_4 + a_2^2)^2}{2 v_1} \right) = 0.
\end{equation}
This establishes (\ref{eq:Q3beta1}).
\vspace{10pt}

\noindent
{\bf Proof of (\ref{eq:Q3beta2}).}  
Iterated application of (\ref{eq:Dsquare}) to (\ref{eq:Dx2^m}) yields
\begin{equation}\label{eq:Dx2^m2n}
\begin{split}
D(x_2^{m2^{n-2}}) & = (a'_1)^{3 \cdot 2^{n-1}} (a'_3)^{m2^n - 2^{n-1}} + \cdots \\
& \quad \quad + 2 (a'_1)^{3 \cdot 2^{n-2}} (a'_3)^{m2^n - 2^{n-2}} + \cdots \\
& \quad \quad \quad + 4 (a'_1)^{3 \cdot 2^{n-3}} (a'_3)^{m2^n - 2^{n-3}} + \cdots \\
& \quad \quad \quad \quad + \cdots \\
& \quad \quad \quad \quad \quad + 2^{n-1}(a'_1)^{3} (a'_3)^{m2^n-1} + \cdots.
\end{split}
\end{equation}
It follows that 
$$
D\left( \frac{x_2^{m2^{n-2}}}{4 v_1^{2j}} \right) = 0, \quad 2j \le a(n-1).
$$
This establishes (\ref{eq:Q3beta2}).
\vspace{10pt}

\noindent
{\bf Proof of (\ref{eq:Q3beta3}).}
Suppose that $j$ is even.  Then the ideal $(2^k, v_1^{j2^{k-2}})$ is invariant, and reducing (\ref{eq:Dx2^m2n}) modulo this invariant ideal gives 
$$ D \left( \frac{x_2^{m2^{n-2}}}{2^k v_1^{j2^{k-2}}} \right) = 0, \quad j2^{k-2} \le a(n-k+1). $$
This establishes (\ref{eq:Q3beta3}) for $j$ even.  

Suppose now that $j$ is odd.  Then the ideal $(2^k, v_1^{j2^{k-2}+2^{k-2}})$ is invariant, and in order to compute $D(\frac{x_2^{m2^{n-2}}}{2^k v_1^{j2^{k-2}}})$ we must compute $D(a_1^{2^{k-2}}x_2^{m2^{n-2}})$ modulo 
$(2^k, v_1^{j2^{k-2}+2^{k-2}})$.  
Repeated application of (\ref{eq:Dsquare}) to (\ref{eq:Da1^2}) yields
\begin{equation}\label{eq:Da1^2k}
D(a_1^{2^{k-2}}) \equiv 2^{k-1} a_1^{2^{k-2}-2} s^2 + 2^{k-1} a_1^{2^{k-2}-1} s \mod 2^k. 
\end{equation}
We also note that since 
$$ x_2 \equiv a_3^4 + a_1^3 a_3^3 + \cdots \mod 2 $$
we have
\begin{equation}\label{eq:x2^m2n}
\begin{split}
x_2^{m2^{n-2}} & \equiv a_3^{m2^n} + a^{3 \cdot 2^{n-2}}_1 a_3^{3\cdot 2^{n-2} + (m-1)2^{n-2}} + \cdots \mod 2 \\ 
& \equiv a_3^{m2^n} + a^{3 \cdot 2^{n-2}}_1 a_3^{2^{n-1} + m2^{n-2}} + \cdots \mod 2.
\end{split}
\end{equation}
Applying (\ref{eq:Dproduct}) to (\ref{eq:Dx2^m2n}), (\ref{eq:Da1^2k}), and (\ref{eq:x2^m2n}), we get
\begin{equation}\label{eq:Da1^2kx2^m2n}
\begin{split}
D(a_1^{2^{k-2}}x_2^{m2^{n-2}}) & = (a'_1)^{3 \cdot 2^{n-1}+2^{k-2}} (a'_3)^{m2^n - 2^{n-1}} + \cdots \\
& \quad \quad + 2 (a'_1)^{3 \cdot 2^{n-2}+2^{k-2}} (a'_3)^{m2^n - 2^{n-2}} + \cdots \\
& \quad \quad \quad + 4 (a'_1)^{3 \cdot 2^{n-3}+ 2^{k-2}} (a'_3)^{m2^n - 2^{n-3}} + \cdots \\
& \quad \quad \quad \quad + \cdots \\
& \quad \quad \quad \quad \quad + 2^{k-1}(a'_1)^{3 \cdot 2^{n-k}+2^{k-2}} (a'_3)^{m2^n-2^{n-k}} + \cdots \\
& +  2^{k-1} a_1^{2^{k-2}-2} a_3^{m2^n}s^2  + 2^{k-1} a_1^{2^{k-2}-1} a_3^{m2^n} s + 2^{k-1}a^{3 \cdot 2^{n-2}}_1 a_3^{2^{n-1} + m2^{n-2}}s^2 + \cdots.
\end{split}
\end{equation}
We deduce that for $j$ odd and $j2^{k-2} \le a(n-k+1)$ we have
$$ D\left( \frac{x_2^{m2^{n-2}}}{2^k v_1^{j2^{k-2}}}\right) = \frac{a_3^{m2^n}s^2}{2v_1^{j2^{k-2}+2}} + \frac{a_3^{m2^n}s}{2v_1^{j2^{k-2}+1}}. $$
However, Lemma~\ref{lem:v1BSS} implies that $\frac{a_3^{m2^n} h_2}{v_1^{j2^{k-2}+2}}$ is killed by $\frac{a_3^{m2^n+1}}{v_1^{j2^{k-2}+3}}$.  It follows from (\ref{eq:Dx0}) that we have
$$ D(x_0^{m2^n}) \equiv a_1^{m2^n} s^{m2^{n+1}} + \cdots \mod 2 $$
and hence
$$ D(x_0^{m2^n+1}) \equiv a_1 a^{m2^n}_3 s^2 + a_1^2 a^{m2^n}_3 s + a_1^{m2^n} a_3 s^{m2^{n+1}} + \cdots \mod 2. $$
This implies that we have 
\begin{equation}\label{eq:Dx0^m2n}
D\left( \frac{x_0^{m2^n+1}}{2 v_1^{j2^{k-2}+3}} \right) = \frac{a^{m2^n}_3 s^2}{2 v_1^{j2^{k-2}+2}}
+ \frac{a^{m2^n}_3 s}{2 v_1^{j2^{k-2}+1}}
\end{equation}
and therefore
$$ D\left( \frac{x_2^{m2^{n-2}}}{2^k v_1^{j2^{k-2}}} + \frac{x_0^{m2^n+1}}{2 v_1^{j2^{k-2}+3}} \right) = 0.$$
This establishes (\ref{eq:Q3beta3}).
\vspace{12pt}

\noindent
{\bf Proof of (\ref{eq:Q3beta5}) for $\pmb{n \ge 3}$.}
It follows from (\ref{eq:Dx2^m2n}) that we have for $a(n-1) < 2j \le a(n)$
$$ D\left( \frac{x_2^{m2^{n-2}}}{4 v_1^{2j}} \right) = \frac{(a'_3)^{m2^n-2^{n-2}}}{2 v_1^{2j-a(n-1)}} + \cdots $$
and hence
$$ \partial\left( \frac{x_2^{m2^{n-2}}}{2 v_1^{2j}} \right) = \frac{(a'_3)^{m2^n-2^{n-2}}}{v_1^{2j-a(n-1)}} + \cdots. $$
This element is not in $D(M_1^1 A)$ by Lemma~\ref{lem:v1BSS}.  This establishes (\ref{eq:Q3beta5}).
\vspace{10pt}

\noindent
{\bf Proof of (\ref{eq:Q3beta6}).}
Suppose that $j$ is even.  Then the ideal $(2^k, v_1^{j2^{k-2}})$ is invariant, and reducing (\ref{eq:Dx2^m2n}) modulo this invariant ideal gives 
$$ D \left( \frac{x_2^{m2^{n-2}}}{2^k v_1^{j2^{k-2}}} \right) = \frac{(a'_3)^{m2^n-2^{n-k}}}{2 v_1^{j2^{k-2} - a(n-k+1)}} + \cdots, \quad a(n-k+1) < j2^{k-2} \le a(n-k+2) $$
and therefore
$$ \partial \left( \frac{x_2^{m2^{n-2}}}{2^{k-1} v_1^{j2^{k-2}}} \right) = \frac{(a'_3)^{m2^n-2^{n-k}}}{ v_1^{j2^{k-2} - a(n-k+1)}}+ \cdots , \quad a(n-k+1) < j2^{k-2} \le a(n-k+2). $$
Since $k \ge 3$, this is not in $D(M_1^1 A)$ by Lemma~\ref{lem:v1BSS}.  
This establishes (\ref{eq:Q3beta3}) for $j$ even.  

Suppose now that $j$ is odd.  Then the ideal $(2^k, v_1^{j2^{k-2}+2^{k-2}})$ is invariant, and in order to compute $D(\frac{x_2^{m2^{n-2}}}{2^k v_1^{j2^{k-2}}})$ we must compute $D(a_1^{2^{k-2}}x_2^{m2^{n-2}})$ modulo 
$(2^k, v_1^{j2^{k-2}+2^{k-2}})$.    It follows from (\ref{eq:Da1^2kx2^m2n}) that for $j$ odd and $a(n-k+1) < j2^{k-2} \le a(n-k+2)$ we have
$$ D\left( \frac{x_2^{m2^{n-2}}}{2^k v_1^{j2^{k-2}}}\right) = \frac{a_3^{m2^n}s^2}{2v_1^{j2^{k-2}+2}} + \frac{a_3^{m2^n}s}{2v_1^{j2^{k-2}+1}} + \frac{(a'_3)^{m2^n-2^{n-k}}}{2 v_1^{j2^{k-2} - a(n-k+1)}} + \cdots. $$
Using (\ref{eq:Dx0^m2n}), we have
$$ D\left( \frac{x_2^{m2^{n-2}}}{2^k v_1^{j2^{k-2}}} + \frac{x_0^{m2^n+1}}{2 v_1^{j2^{k-2}+3}}\right) = \frac{(a'_3)^{m2^n-2^{n-k}}}{2 v_1^{j2^{k-2} - a(n-k+1)}} + \cdots $$
and therefore
$$ \partial\left( \frac{x_2^{m2^{n-2}}}{2^{k-1} v_1^{j2^{k-2}}}\right) = \frac{(a'_3)^{m2^n-2^{n-k}}}{ v_1^{j2^{k-2} - a(n-k+1)}} + \cdots $$
Since $k \ge 3$, this is not in $D(M_1^1 A)$ by Lemma~\ref{lem:v1BSS}.  
This establishes (\ref{eq:Q3beta3}) for $j$ odd.
\end{proof}

\subsection{Computation of $\pmb{H^{*,*}(M_2^0 C^*_{tot}(Q(5)))}$}

We have (as before)
\begin{gather*}
H^{*,*}(M_2^0 C^*_{\Gamma}(A))  = \FF_2[a_3^{\pm 1},  h_{1}, h_2, g]/(h_2^3 = a_3 h_1^3), \\
H^{*,*}(M_2^0 C^*_{\Lambda^1(5)}(B^1))  = \FF_2[u^{\pm}, h_{2,1}]  
\end{gather*}
with $(s,t)$-bidegrees
\begin{align*}
| a_3 | & = (0,6),  \\
| h_1 | & = (1, 2),  \\
| h_2 | & = (1,4),  \\
| g | & = (4, 24), \\
| u | & = (0,2), \\ 
| h_{2,1} | & = (1,6),
\end{align*} 
and $h_{2,1}^4 = g$.  
Moreover, the spectral sequence of the double complex gives
\begin{equation}\label{eq:ssdouble5}
 \begin{array}{c}
H^{s,t}(M_2^0 C^*_{\Gamma}(A)) \oplus  \\
H^{s-1,t}(M_2^0 C^*_{\Gamma}(A)) \oplus H^{s-1,t}(M_2^0 C^*_{\Lambda^1(5)}(B^1)) \oplus \\ 
H^{s-2,t}(M_2^0 C^*_{\Lambda^1(5)}(B^1)) 
\end{array}
\Bigg{\}}
\Rightarrow  H^{s,t}(M_2^0 C^*_{tot}(Q(5))).
\end{equation}
As before, we will differentiate the terms $x$ with the same name occurring in the different groups in the $E_1$-term of spectral sequence (\ref{eq:ssdouble5}), we shall employ the following notational convention:
\begin{equation*}
\begin{split}
x & \in C^*_{\Gamma}(A) \: \text{on the $0$-line}, \\
\bar{x} & \in C^*_{\Gamma}(A) \: \text{on the $1$-line}, \\
y & \in C^*_{\Lambda^1(\ell)}(B^1) \: \text{on the $1$-line}, \\
\bar{y} & \in C^*_{\Lambda^1(\ell)}(B^1) \: \text{on the $2$-line}.
\end{split}
\end{equation*}
The formulas of Section~\ref{subsec:Q5} show that the only non-trivial $d_1$ differentials in spectral sequence (\ref{eq:ssdouble}) are 
$$ d_1(g^i \bar{a}_3^j) = h_{2,1}^{4i} \bar{u}^{3j}. $$
Since the spectral sequence (\ref{eq:ssdouble5}) is a spectral sequence of modules over $\Ext^{*,*}(BP_*)$, we deduce that there are no possible $d_r$-differentials for $r > 1$.  We deduce that we have 
\begin{align*}
H^{*,*}(M_2^0 C^*_{tot}(Q(5))) & =  \FF_2[a_3^{\pm 1},  h_{1}, h_2, g]/(h_2^3 = a_3 h_1^3) \\
& \quad \oplus \FF_2[\bar{a}_3^{\pm 1}, \bar{g}]\{ \bar{h}_1, \bar{h}_2, \bar{h}_1^2, \bar{h}^2_2, \bar{h}_2^3 = \bar{a}_3 \bar{h}_1^3\} \\
& \quad \oplus  \FF_2[u^{\pm 1}, h_{2,1}]  \\
& \quad \oplus  \FF_2[\bar{u}^{\pm 3}, g]\{ \bar{h}_{2,1}, (\bar{h}_{2,1})^2, (\bar{h}_{2,1})^3 \} \\
& \quad \oplus  \FF_2[\bar{u}^{\pm 3}, \bar{h}_{2,1}]\{ \bar{u}, \bar{u}^2 \}.
\end{align*}

\subsection{Computation of $\pmb{H^{0,*}(M_1^1 C^*_{tot}(Q(5)))}$}

We now compute the differentials in the $v_1$-BSS
\begin{equation}\label{eq:Q5v1BSS}
H^{s,*}(M_2^0 C^*_{tot}(Q(5))) \otimes \FF_2[v_1]/(v_1^\infty) \Rightarrow H^{s,*}(M_1^1 C^*_{tot}(Q(5)))
\end{equation}
from the $s = 0$-line to the $s = 1$-line.  

One computes using the formulas of Section~\ref{subsec:Q5}:
\begin{equation}\label{eq:Q5Dxi}
\begin{split}
D(x_0) & \equiv a_1 s^2 \mod (2, v_1^2), \\
D(x_1) & \equiv a_1^2 a_3 s \mod (2, v_1^3), \\
D(x_2) & \equiv a_1^8 u^4 \mod (2, v_1^9)
\end{split}
\end{equation}
for $x_i$ as in Section~\ref{sec:Q3v1BSS}.  The formula for $D(x_2)$ already informs us that the $v_1$-BSS for $Q(5)$ differs from the $v_1$-BSS for $Q(3)$.

It follows from inductively applying (\ref{eq:square}) that we have
$$ D(x_2^{2^{n-2}}) \equiv a_1^{2^{n+1}}u^{2^{n}} \mod (2, v_1^{2^{n+1}+1}). $$
It follows from $(\ref{eq:oddpower})$ that for $m$ odd, we have
\begin{align*}
D(x^m_0) & \equiv a_1 s^2 a_3^{m-1} \mod (2, v_1^2), \\
D(x_1^m) & \equiv a_1^2 a^{2m-1}_3 s \mod (2, v_1^3), \\
D(x_2^{m2^{n-2}}) & \equiv a_1^{2^{n+1}}u^{3m 2^n - 2^{n+1}} \mod (2, v_1^{2^{n+1}+1}).
\end{align*}

We deduce the following.

\begin{lemma}\label{lem:Q5v1BSS}
The $v_1$-BSS differentials in (\ref{eq:v1BSS}) from the $(s = 0)$-line to the $(s = 1)$-line are given by
\begin{align*}
d_{1}\left( \frac{a_{3}^{m}}{v_1^j}\right) & = \frac{a^{m-1}_{3} h_2}{v_1^{j-1}}, \\
d_{2}\left( \frac{a_{3}^{2m}}{v_1^j}\right) & = \frac{a^{2m-1}_{3} h_1}{v_1^{j-2}}, \\
d_{2^{n+1}} \left( \frac{a_3^{m2^n}}{v_1^j} \right) & = \frac{u^{3m2^n - 2^{n+1}}}{v_1^{j - 2^{n+1}}}
\end{align*}
where $m$ is odd.
\end{lemma}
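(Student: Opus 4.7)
The proof follows the same script as Lemma~\ref{lem:v1BSS} for $Q(3)$: each differential will be read off from one of the three displayed congruences
\begin{align*}
D(x_0^m) &\equiv a_1 s^2 a_3^{m-1} \mod (2, v_1^2),\\
D(x_1^m) &\equiv a_1^2 a_3^{2m-1} s \mod (2, v_1^3),\\
D(x_2^{m2^{n-2}}) &\equiv a_1^{2^{n+1}} u^{3m2^n - 2^{n+1}} \mod (2, v_1^{2^{n+1}+1}),
\end{align*}
derived just above the lemma, once we check that the leading term of each $x_i$ represents the correct multiple of $a_3$ in $M_2^0 A$ and that the boundary term is a nontrivial cohomology class in $M_2^0 C^*_{tot}(Q(5))$. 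The only substantive difference from the $Q(3)$ case is the third formula, which reflects the shift of generating degree from $|a_3|=6$ in $B^1(3)$ to $|u|=2$ in $B^1(5)$, and this is what forces the differential to have length $2^{n+1}$ rather than $3\cdot 2^{n-1}$.

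First I would note that $x_0 \equiv a_3$ and $x_1\equiv a_3^2$ modulo $(2,v_1)$ by construction, and that $x_2 = \Delta \equiv a_3^4 \mod (2,v_1)$ in the Weierstrass Hopf algebroid (the other terms in $\Delta$ all contain a factor of $a_1 = v_1$). Raising to powers and using $(\ref{eq:oddpower})$ gives $x_2^{m2^{n-2}} \equiv a_3^{m2^n}$ mod $(2,v_1)$ for $m$ odd. Consequently, each of the ratios $x_0^m/v_1^j$, $x_1^m/v_1^j$, $x_2^{m2^{n-2}}/v_1^j$ is a lift of the corresponding ratio $a_3^k/v_1^j$ from $M_1^1 A$ to $M_1^1 C^0_{tot}(Q(5))$, and hence is exactly the kind of representative needed to compute the $v_1$-Bockstein.

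Dividing the three congruences above by the relevant power of $v_1$ and reducing mod $(2,v_1)$ yields
\begin{align*}
D(x_0^m)/v_1 &\equiv s^2 a_3^{m-1},\\
D(x_1^m)/v_1^2 &\equiv a_3^{2m-1} s,\\
D(x_2^{m2^{n-2}})/v_1^{2^{n+1}} &\equiv u^{3m2^n - 2^{n+1}},
\end{align*}
all modulo $(2,v_1)$. Under the identifications $s^2 \leftrightarrow h_2$ and $s\leftrightarrow h_1$ in $H^{*,*}(M_2^0 C^*_{\Gamma}(A))$, these are precisely the classes $a_3^{m-1}h_2$, $a_3^{2m-1}h_1$, and $u^{3m2^n-2^{n+1}}$ in $H^{1,*}(M_2^0 C^*_{tot}(Q(5)))$, so they give the three Bockstein differentials claimed. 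The only place one needs to be careful is to ensure that these targets are genuinely nontrivial in the $E_{2^{n+1}}$ page of (\ref{eq:Q5v1BSS})---i.e.\ that they survive all prior $v_1$-Bocksteins---but this is automatic since they lie on the $s=1$ line and all earlier differentials from the $s=0$ line on $a_3^{m2^n}/v_1^j$ would target classes of strictly smaller $v_1$-adic valuation, which by inspection of the three leading terms are zero. The main (very mild) obstacle is bookkeeping the ranges of $v_1$-divisibility in the third formula to confirm that no intermediate differential occurs between $d_{2^n+1}$ and $d_{2^{n+1}}$; this follows from the vanishing of the ``middle'' coefficients in $D(x_2^{m2^{n-2}})$ modulo $(2, v_1^{2^{n+1}})$, exactly as recorded in (\ref{eq:Q5Dxi}).
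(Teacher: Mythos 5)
Your proposal is correct and follows essentially the same route as the paper: the paper likewise deduces the lemma directly from the three leading-term congruences for $D(x_0^m)$, $D(x_1^m)$, and $D(x_2^{m2^{n-2}})$ (obtained from (\ref{eq:Q5Dxi}) via (\ref{eq:square}) and (\ref{eq:oddpower})) and reads off the Bockstein differentials under the identifications $s \leftrightarrow h_1$, $s^2 \leftrightarrow h_2$. The additional remarks you make about nontriviality of the targets and the absence of intermediate differentials are left implicit in the paper but amount to the same standard Bockstein bookkeeping.
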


\begin{cor}
\label{cor:Q5beta}
The groups $H^{0,*}(M_1^1C^*_{tot}(Q(5)))$ are generated by the elements  
\begin{align*}
& 1/v_1^j,  & & j \ge 1, \\
\\
& \frac{a_3^{m2^n}}{v_1^{j}}, & &  
\text{$m$ odd and} \\ 
& & & j \le \begin{cases}
1, & n = 0, \\
2, & n = 1, \\
2^{n+1}, & n \ge 2.
\end{cases}
\end{align*}
In particular, the map
$$ \Ext^{0,*}(M_1^1 BP_*) \rightarrow H^{0,*}(M_1^1 C^*_{tot}(Q(5))) $$
is \emph{not} an isomorphism.
\end{cor}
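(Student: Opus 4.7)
The plan is to read the corollary directly off of Lemma \ref{lem:Q5v1BSS}, using the fact that $H^{0,*}(M_2^0 C^*_{tot}(Q(5))) = \FF_2[a_3^{\pm 1}]$ (the $s=0$ slice of the computation of $H^{*,*}(M_2^0 C^*_{tot}(Q(5)))$ recorded in the preceding subsection, since every other summand in that description lives in cohomological degree $\geq 1$). First I would note that no differential of the $v_1$-BSS (\ref{eq:Q5v1BSS}) enters the $s = 0$ line, so a class $a_3^m/v_1^j$ with $m \in \ZZ$ and $j \geq 1$ is a permanent cycle if and only if it does not support one of the three differentials in Lemma \ref{lem:Q5v1BSS}.

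Next I would enumerate by the $2$-adic valuation $n = \nu_2(m)$. When $m = 0$ none of the three differentials is applicable, yielding the family $1/v_1^j$ for every $j \geq 1$. When $m = m' 2^n$ with $m'$ odd, exactly one of $d_1$, $d_2$, $d_{2^{n+1}}$ acts nontrivially, according to whether $n = 0$, $n = 1$, or $n \geq 2$, and in each case the target is a nonzero class on the $s = 1$ line (visible in the explicit description of $H^{1,*}(M_2^0 C^*_{tot}(Q(5)))$ already in hand). The surviving upper bounds on $j$ are therefore $1$, $2$, and $2^{n+1}$ respectively, matching the statement of the corollary.

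For the ``not an isomorphism'' clause I would compare with the sphere's $\Ext^{0,*}(M_1^1 BP_*)$, whose generators come from the $k = 1$ slice of Theorem \ref{thm:Shim} via the $v_0$-BSS
\[
  \Ext^{0,*}(M_1^1 BP_*) \otimes \FF_2[v_0]/(v_0^\infty) \Rightarrow \Ext^{0,*}(M_0^2 BP_*);
\]
the resulting bound is $j \leq a(n) = 3 \cdot 2^{n-1}$ for $n \geq 2$, strictly smaller than the $Q(5)$-bound $2^{n+1} = 4 \cdot 2^{n-1}$. Hence for each $n \geq 2$ the target contains classes such as $a_3^{m 2^n}/v_1^{2^{n+1}}$ which have no preimage, and the map cannot be an isomorphism. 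I do not anticipate a genuine obstacle: the hard work is entirely in Lemma \ref{lem:Q5v1BSS}, after which the corollary is bookkeeping, with the only subtle point being the correct extraction of the generators of $\Ext^{0,*}(M_1^1 BP_*)$ from Shimomura's description of $\Ext^{0,*}(M_0^2 BP_*)$.
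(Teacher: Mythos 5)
Your proposal is correct and follows the same route as the paper: the corollary is deduced directly from Lemma~\ref{lem:Q5v1BSS}, using that the $s=0$ line of $H^{*,*}(M_2^0 C^*_{tot}(Q(5)))$ is $\FF_2[a_3^{\pm 1}]$, that nothing can hit the $0$-line, and that each named class survives exactly when the relevant differential has trivial target. The paper leaves the final comparison with $\Ext^{0,*}(M_1^1 BP_*)$ implicit, but your extraction of the bound $j \le a(n) = 3\cdot 2^{n-1}$ from the $k=1$ slice of Shimomura's theorem and the resulting discrepancy with $2^{n+1}$ for $n \ge 2$ is exactly the intended justification.
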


\section{Low dimensional computations}
\label{sec:lowdim}

In this section we explore the $2$-primary homotopy $\pi_*Q(3)$ and $\pi_* Q(5)$ for $0 \le *
<48$ (everything is implicitly $2$-localized).  In the case of $Q(3)$, Mark Mahowald has done similar computations, over a much vaster range, for the closely related Goerss-Henn-Mahowald-Rezk conjectural resolution of the $2$-primary $K(2)$-local sphere --- there is definitely some overlap here.  In the case of $Q(5)$ the computations represent some genuinely unexplored territory, and give evidence that $Q(5)$ may detect more non-$\beta$-family $v_2$-periodic homotopy than $Q(3)$.

We do these low-dimensional computations in the most simple-minded manner, by computing the Bousfield-Kan spectral sequence
$$ E_1^{s,t}(Q(\ell)) \Rightarrow \pi_{t-s} Q(\ell) $$
with
$$
E_1^{s,t}  = \begin{cases}
\pi_t \TMF, & s = 0, \\
\pi_t \TMF_0(\ell) \oplus \pi_t \TMF, & s = 1, \\
\pi_t \TMF_0(\ell), & s = 2.
\end{cases} 
$$
Actually, as the periodic versions of $\TMF$ typically have $\pi_t$ of infinite rank, we only compute a certain ``connective cover'' of the spectral sequence --- we only include holomorphic modular forms in this low dimensional computation (i.e. we do not invert $\Delta$).  Thus we are only computing a portion of the spectral sequence, which we shall refer to as the \emph{holomorphic summand}.  Note that the authors are not claiming that there exists a bounded below version of $Q(\ell)$ whose homotopy groups the holomorphic summand converges to (it remains an interesting open question how such connective versions of $Q(\ell)$ could be obtained by extending the semi-cosimplicial complex over the cusps).  Indeed, recent advances by Hill and Lawson \cite{HillLawson} may produce such a bounded below $Q(\ell)$-spectrum, but we do not pursue this possibility here.

In the following calculations, we employ a leading term algorithm, which basically amounts to only computing the leading terms of the differentials in row echelon form.  Similarly to the previous section, we write everything $2$-adically, and employ a lexicographical ordering on monomials
$$ 2^i v_1^j x. $$
Namely we say that $2^i v_1^j x$ is \emph{lower} than $2^{i'}v_1^{j'} x'$ if $i < i'$, or if $i = i'$ and $j < j'$.
We will write ``leading term'' differentials:  the expression
$$ x \mapsto y $$
indicates that 
$$ d_r(x + \text{higher terms}) =  y + \text{higher terms}. $$

\subsection{The case of $Q(3)$}
 
In the case of $\TMF_0(3)$, recall that the modular forms of for $\Gamma_0(3)$ are spanned by those monomials $a_1^i a_3^j$ in $\ZZ[1/3, a_1, a_3]$ with $i + j$ even.  In this section we will refer to $a_1$ as $v_1$ and $a_3$ as $v_2$, because that is what they correspond to under the complex orientation.

\begin{figure}
\includegraphics[height=\textwidth, angle=270]{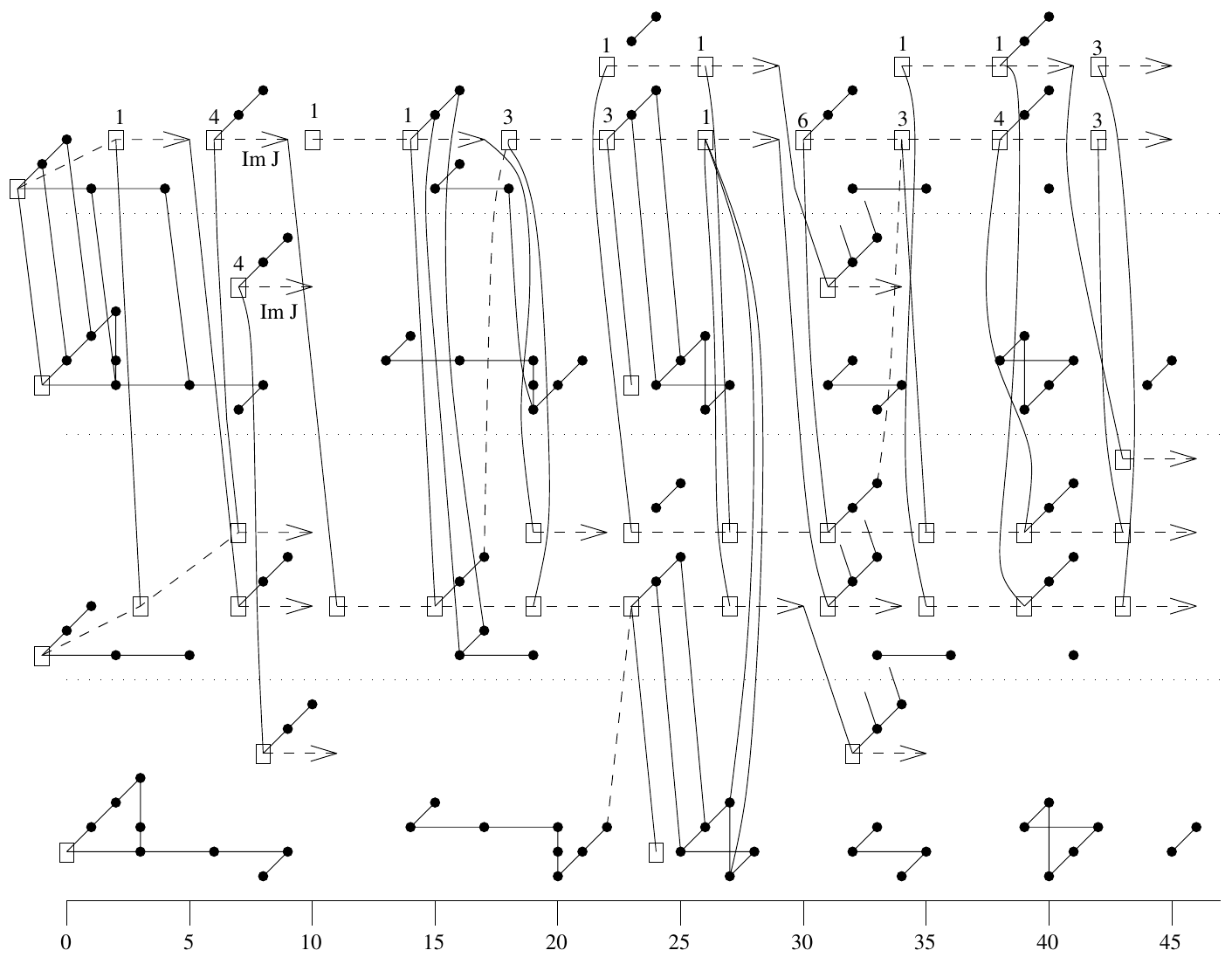}
\caption{The holomorphic summand of the spectral sequence $E_r^{s,t}(Q(3))$ in low degrees.}\label{fig:Q3low}
\end{figure}

Figure~\ref{fig:Q3low} shows a low dimensional portion of the holomorphic summand of the spectral sequence $E^{s,t}_r(Q(3))$.  There are many aspects of this chart that deserve explanation/remark.
\begin{itemize}
\item The copies of $\pi_*\TMF$ and $\pi_*\TMF_0(3)$ are separated by dotted lines.  The bottom pattern is the $s= 0$ line of the spectral sequence ($\pi_* \TMF$).  The next pattern up is the $\pi_* \TMF_0(3)$ summand of the $s = 1$ line, followed by the $\pi_* \TMF$ summand of the $s = 1$ line.  The top pattern is the $s = 2$ line of the spectral sequence ($\pi_* \TMF_0(3)$).  The spectral sequence is Adams-indexed, with the $x$-axis corresponding to the coordinate $t-s$.  
\item Dots indicate $\ZZ/2$'s.  Boxes indicate $\ZZ_{(2)}$'s.  The solid lines between the dots indicate $2$-extensions, and $\eta$ and $\nu$ multiplication.  
\item Horizontal dashed lines denote $bo$-patterns.  Arrows indicate the $bo$ patterns continue.
\item There are two $bo$-patterns which are denoted ``Im J''.  These $bo$-patterns (together with the $bo$-patterns which hit them with differentials) combine to form Im J patterns.
\item Differentials are indicated with vertical curvy lines.  All differentials displayed only indicate the leading terms of the differentials, as explained in the beginning of this section.  For example, the $d_1$ differential from the $1$-line to the $2$-line showing 
$$  v_1^2v_2^2 \mapsto 2v_2^2v_1^2 $$
actually corresponds to a differential
$$ d_1(v_1^2v_2^2+v_1^5v_2) = 2v_2^2v_1^2 + \text{higher terms}. $$
The differentials on the torsion-free portions spanned by the modular forms are computed using the Mahowald-Rezk formulas.
\item Differentials on the torsion summand can often be computed by noting that the maps $f$, $t$, $q$, and $\psi^3$ that define the coface maps of the semi-cosimplicial spectrum $Q(3)^\bullet$ are all maps of ring spectra, and in particular all have the same effect on elements in the Hurewicz image.  There are a few notable exceptions, which we explain below.
\item Dashed lines between layers indicate hidden extensions.  These (probably) do not represent all hidden extensions: there are several possible hidden extensions which we have not resolved.
\item The differentials supported by the non-Hurewicz classes $x$ and $\eta x$ in $\pi_{17} \TMF_0(3)$ and $\pi_{18} \TMF_0(3)$ are deduced because they kill the Hurewicz image of $\beta_{4/4} \eta$ and $\beta_{4/4} \eta^2$, which are zero in $\pi_*S$.
\item The $d_2$-differentials are computed by observing that there is a (zero) hidden extension $\eta^3v_1^6v_2^2[1] = 4v_1^5v_2^3[2]$ (where $[s]$ means $s$-line).
\item Up to the natural deviations introduced by computing with the Bousfield-Kan spectral sequence, and not the Adams-Novikov spectral sequence, the divided $\beta$ family is faithfully reproduced on the $2$-line with the exception of the additional copy of Im J (there in fact should be infinitely many copies of such Im J summands) and one peculiar abnormality: the element $\beta_{8/8}$, detected by $32v_1v_2^5$ is $32$-divisible.  This extra divisibility does not contradict the results of Section~\ref{sec:beta} --- the results there pertain to the monochromatic layer $M_2Q(3)$, and not $Q(3)$ directly.
\item Boxes which are targets of differentials are labeled with numbers.  A number $n$ above a box indicates that after all differentials are run, you are left with a $\ZZ/2^n$.
\item It is interesting to note that the permanent cycles on the zero line in this range are exactly the image of the $\TMF$-Hurewicz homomorphism
\end{itemize}

We did not label the modular forms generating the boxes in the spectral sequence.  In the case of $\pi_* \TMF$, the dimensions resolve this ambiguity.  The remaining ambiguity is resolved by the following table, which indicates all of the leading terms of $d_1$ differentials between torsion free classes on the $1$ and $2$-lines of the spectral sequence.
\begin{alignat*}{3}
v_1^2 & \mapsto 2v_1^2 && && \\
v_1v_2 & \mapsto v_1^4 & v_1^4 & \mapsto 16v_1v_2 && \\
v_2^2 & \mapsto 2v_2^2 && &&\\
v_1^2v_2^2 & \mapsto 2v_2^2v_1^2 && &&\\
v_1v_2^3 & \mapsto v_2^2v_1^4  & v_1^4v_2^2 & \mapsto 8v_2^3v_1 && \\
8\Delta & \mapsto 8v_2^3v_1^3 & v_2^4 & \mapsto 2v_2^4 & v_1^4v_2^2 & \mapsto 8v_2^3v_1 \\
v_2^4 & \mapsto 2v_2^4 && && \\
v_2^4v_1^2 & \mapsto 2v_2^4v_1^2 & v_1^8v_2^2 & \mapsto 8v_2^3v_1^5 && \\
c_4\Delta & \mapsto v_2^4v_1^4 & \quad  v_2^5v_1 & \mapsto v_2^3v_1^7 \quad & v_1^4v_2^4 & \mapsto 64v_2^5v_1 \\
v_2^4v_1^6 & \mapsto 8v_2^5v_1^3 & v_2^6 & \mapsto 2v_2^6 && \\
v_2^4v_1^8 & \mapsto 16v_2^5v_1^5 & v_2^6v_1^2 & \mapsto 2v_2^6v_1^2 && \\
v_2^7v_1 & \mapsto v_2^6v_1^4 & v_2^6v_1^4 & \mapsto 8v_2^7v_1 & v_1^{10}v_2^4 & \mapsto 8v_2^5v_1^7 
\end{alignat*}

\subsection{The case of $Q(5)$}

\begin{figure}
\includegraphics[height=\textwidth, angle=270]{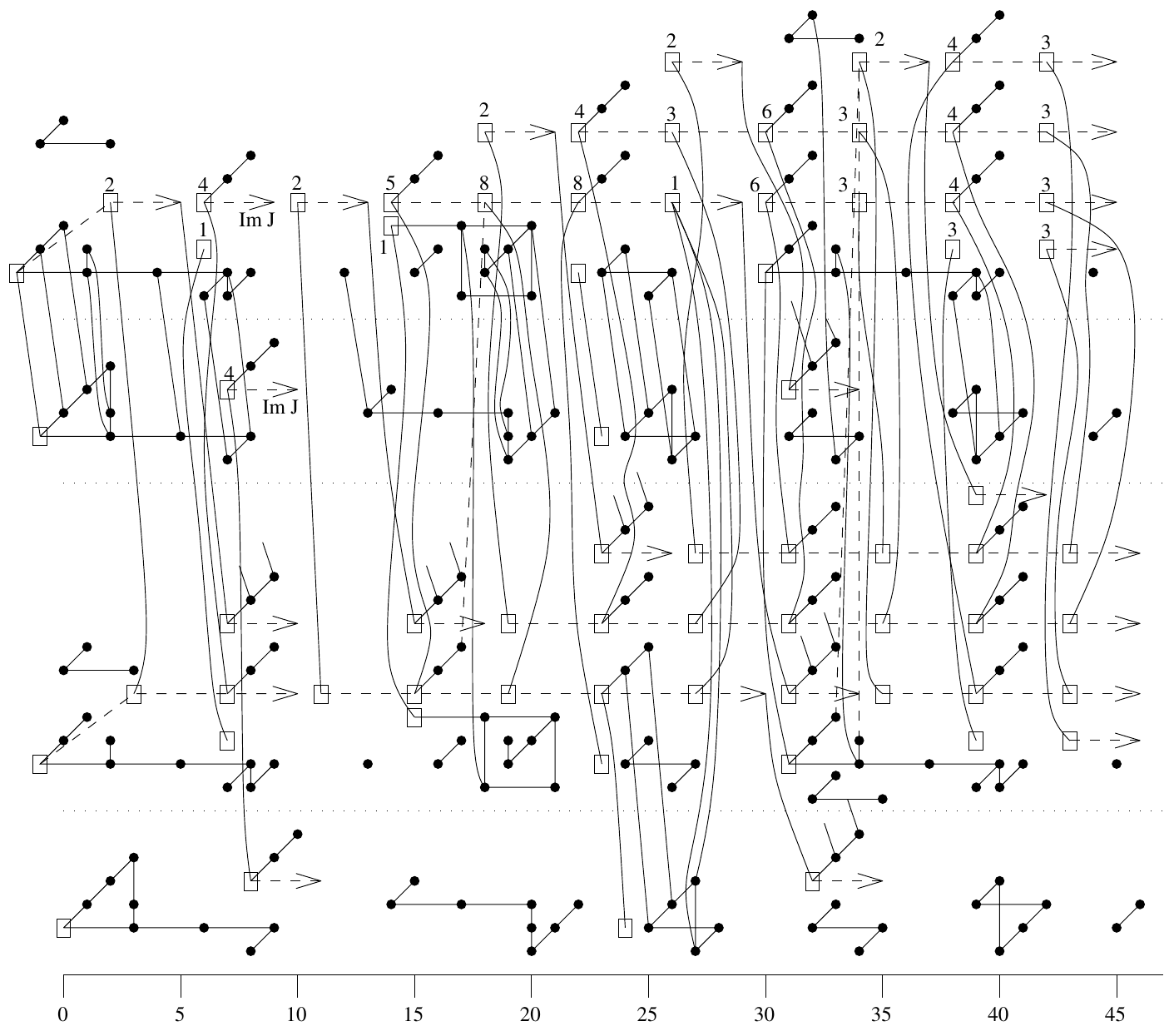}
\caption{The holomorphic summand of the spectral sequence $E_r^{s,t}(Q(5))$ in low degrees.}\label{fig:Q5low}
\end{figure}

Figure~\ref{fig:Q5low} displays the spectral sequence for $Q(5)$.  Essentially all of the conventions and remarks for the $Q(3)$ computation above extend to the $Q(5)$ computation.  Below is the corresponding table for leading terms of differentials from the torsion-free elements in the $1$-line to those in the $2$-line.

\begin{alignat*}{5}
b_2 & \mapsto 4b_2 && && && &&\\
b_4 & \mapsto b_2^2 & \delta & \mapsto 2\delta & b_2^2 & \mapsto 16b_4 && &&\\
b_2\delta & \mapsto 4b_2\delta && && && && \\
b_4\delta & \mapsto b_2^2\delta & \delta^2 & \mapsto 2\delta^2 & b_2^2\delta & \mapsto 32b_4\delta && &&\\
b_2 \delta^2 & \mapsto 4\delta^2b_2 & b_2^3\delta & \mapsto 8b_2b_4\delta && && && \\
8\Delta & \mapsto 8\delta^3 & b_4\delta^2 & \mapsto b_2^2\delta^2 & 4\delta^3 & \mapsto 8b_2^2b_4\delta & b_2^2\delta^2 & \mapsto 16b_4\delta^2 && \\
b_2^5\delta & \mapsto 8b_4\delta b_2^3 & b_2\delta^3 & \mapsto 4b_2\delta^3 & b_2^3\delta^2 & \mapsto 8b_4\delta^2b_2 && && \\
c_4\Delta & \mapsto b_2^2\delta^3 & \quad \delta^4 & \mapsto 2\delta^4 & \quad b_4\delta^3 & \mapsto b_2^4b_4\delta & \quad b_2^2\delta^3 & \mapsto 64b_4\delta^3 & \quad b_2^4\delta^2 & \mapsto 64b_2^2b_4\delta^2  \\
b_2\delta^4 & \mapsto 4b_2\delta^4 & b_2^3\delta^3 & \mapsto 8b_2b_4\delta^3 & b_2^5\delta^2 & \mapsto 8b_2^3b_4\delta^2 && && \\
b_4\delta^4 & \mapsto b_2^2\delta^4 & 4\delta^5 & \mapsto 8\delta^5 & b_2^4\delta^3 & \mapsto 16b_2^2b_4\delta^3 & b_2^2 \delta^4 & \mapsto 16b_4\delta^4 & b_2^6\delta^2 & \mapsto 16b_4\delta^2b_2^4 \\
b_2\delta^5 & \mapsto 4b_2b_4\delta^4 & \quad b_2^3\delta^4 & \mapsto 8b_2\delta^5 & b_2^5\delta^3 & \mapsto 8\delta^3b_4b_2^6 & b_2^7\delta^2 & \mapsto 8b_2^5b_4\delta^2 &&  \\
\end{alignat*}

We make the following remarks.

\begin{itemize}
\item The $2$-line now bears little resemblance to the divided $\beta$-family.  This is in sharp contrast with the situation with $Q(3)$.  This fits well with our premise that while $Q(3)$ reproduces the divided $\beta$ family almost flawlessly, $Q(5)$ does not.
\item The much more robust torsion in $\pi_* \TMF_0(5)$ gives a significant source of homotopy in $\pi_*Q(5)$ which does not appear in $\pi_*Q(3)$.  In particular, the elements
$$ \nu \delta^4, \quad \nu^2\delta^4, \quad \epsilon \delta^4 $$
seem like candidates to detect the elements in $\pi_*S$ with Adams spectral sequence names
$$ h_5 h_2^2, \quad h_5 h_2^3, \quad h_5h_3h_1, $$
though the ambiguity resulting from the leading term algorithm makes it difficult to resolve this in the affirmative.  These classes are \emph{not} seen by $Q(3)$.
\item Just as in the case of $Q(3)$, the permanent cycles on the zero line in this range are exactly the image of the $\TMF$-Hurewicz homomorphism.
\end{itemize}

\bibliographystyle{plain} 
\bibliography{Q5revised}

\end{document}